\theoremstyle{plain}
\newtheorem{thrm}{Theorem}[section]
\newtheorem{lemma}[thrm]{Lemma}
\newtheorem{prop}[thrm]{Proposition}
\newtheorem{rmrk}[thrm]{Remark}
\newtheorem{dfn}[thrm]{Definition}
\begin{document}
% begin top matter
% ***************** macroes needed for this paper ************************

\newcommand{\SL}{\mathcal L^{1,p}( D)}
\newcommand{\Lp}{L^p( Dega)}
\newcommand{\CO}{C^\infty_0( \Omega)}
\newcommand{\Rn}{\mathbb R^n}
\newcommand{\Rm}{\mathbb R^m}
\newcommand{\R}{\mathbb R}
\newcommand{\Om}{\Omega}
\newcommand{\Hn}{\mathbb H^n}
\newcommand{\aB}{\alpha B}
\newcommand{\eps}{\ve}
\newcommand{\BVX}{BV_X(\Omega)}
\newcommand{\p}{\partial}
\newcommand{\IO}{\int_\Omega}
\newcommand{\bG}{\boldsymbol{G}}
\newcommand{\bg}{\mathfrak g}
\newcommand{\bz}{\mathfrak z}
\newcommand{\bv}{\mathfrak v}
\newcommand{\Bux}{\mbox{Box}}
\newcommand{\e}{\ve}
\newcommand{\X}{\mathcal X}
\newcommand{\Y}{\mathcal Y}
\newcommand{\W}{\mathcal W}

\numberwithin{equation}{section}

\newcommand{\RN} {\mathbb{R}^N}
\newcommand{\Sob}{S^{1,p}(\Omega)}
\newcommand{\Dxk}{\frac{\partial}{\partial x_k}}
\newcommand{\Co}{C^\infty_0(\Omega)}
\newcommand{\Je}{J_\ve}
\newcommand{\beq}{\begin{equation}}
\newcommand{\bea}[1]{\begin{array}{#1} }
\newcommand{\eeq}{ \end{equation}}
\newcommand{\ea}{ \end{array}}
\newcommand{\eh}{\ve h}
\newcommand{\Dxi}{\frac{\partial}{\partial x_{i}}}
\newcommand{\Dyi}{\frac{\partial}{\partial y_{i}}}
\newcommand{\Dt}{\frac{\partial}{\partial t}}
\newcommand{\aBa}{(\alpha+1)B}
\newcommand{\GF}{\psi^{1+\frac{1}{2\alpha}}}
\newcommand{\GS}{\psi^{\frac12}}
\newcommand{\HFF}{\frac{\psi}{\rho}}
\newcommand{\HSS}{\frac{\psi}{\rho}}
\newcommand{\HFS}{\rho\psi^{\frac12-\frac{1}{2\alpha}}}
\newcommand{\HSF}{\frac{\psi^{\frac32+\frac{1}{2\alpha}}}{\rho}}
\newcommand{\AF}{\rho}
\newcommand{\AR}{\rho{\psi}^{\frac{1}{2}+\frac{1}{2\alpha}}}
\newcommand{\PF}{\alpha\frac{\psi}{|x|}}
\newcommand{\PS}{\alpha\frac{\psi}{\rho}}
\newcommand{\ds}{\displaystyle}
\newcommand{\Zt}{{\mathcal Z}^{t}}
\newcommand{\XPSI}{2\alpha\psi \begin{pmatrix} \frac{x}{|x|^2}\\ 0 \end{pmatrix} - 2\alpha\frac{{\psi}^2}{\rho^2}\begin{pmatrix} x \\ (\alpha +1)|x|^{-\alpha}y \end{pmatrix}}
\newcommand{\Z}{ \begin{pmatrix} x \\ (\alpha + 1)|x|^{-\alpha}y \end{pmatrix} }
\newcommand{\ZZ}{ \begin{pmatrix} xx^{t} & (\alpha + 1)|x|^{-\alpha}x y^{t}\\
     (\alpha + 1)|x|^{-\alpha}x^{t} y &   (\alpha + 1)^2  |x|^{-2\alpha}yy^{t}\end{pmatrix}}
\newcommand{\norm}[1]{\lVert#1 \rVert}
\newcommand{\ve}{\varepsilon}

\title[Quantitative uniqueness for elliptic  equations, etc.]{Quantitative uniqueness for  elliptic equations at the boundary of $C^{1,\operatorname{Dini}}$ domains}

\author{Agnid Banerjee}
\address{Department of Mathematics\\University of California, Irvine\\
CA 92697} \email[Agnid Banerjee]{agnidban@gmail.com}

\author{Nicola Garofalo}
\address{Dipartimento di Ingegneria Civile, Edile e Ambientale (DICEA) \\ Universit\`a di Padova\\ 35131 Padova, ITALY}
\email[Nicola Garofalo]{rembdrandt54@gmail.com}

\thanks{Second author supported in part by a grant ``Progetti d'Ateneo, 2014'' of the University of Padova.}

%
% 
% AMS information
%
\keywords{}
\subjclass{}

\maketitle
% end top matter

\begin{abstract}
Based on a variant of the frequency function approach of  Almgren, we  establish  an optimal upper bound on the  vanishing order of solutions to variable coefficient Schr\"odinger equations at a portion of the  boundary of a  $C^{1,\operatorname{Dini}}$ domain. Such bound provides a quantitative form of strong unique continuation at the boundary. It can be thought of as a boundary analogue of an interior result  recently obtained by Bakri and Zhu for the standard Laplacian. 

\end{abstract}

\section{Introduction}
We say that the vanishing order of a function $u$ is $\ell$ at $x_0$, if $\ell$ is the largest  integer such that $D^{\alpha} u=0$ for all $|\alpha| \leq \ell$, where $\alpha$ is a multi-index. In the papers \cite{DF}, \cite{DF1}, Donnelly and Fefferman showed that if $u$ is an eigenfunction with eigenvalue  $\lambda$ on a  smooth, compact and connected $n$-dimensional Riemannian manifold $M$, then  the maximal vanishing order of $u$ is less than $C \sqrt{\lambda}$ where $C$ only depends on the manifold $M$. This order of vanishing is sharp. If, in fact, we consider $M = \mathbb S^n \subset \R^{n+1}$, and we take the spherical harmonic $Y_\kappa$ given by the restriction to $\mathbb S^n$ of the function $f(x_1,...,x_n,x_{n+1}) = \Re (x_1 + i x_2)^\kappa$, then one has $\Delta_{\mathbb S^n} Y_\kappa = - \lambda_\kappa Y_\kappa$, with $\lambda_\kappa = \kappa(\kappa+n-2)$, and the order of vanishing of $Y_\kappa$ at the North pole $(0,...,0,1)$ is precisely $\kappa = C \sqrt {\lambda_\kappa}$. 

In his work  \cite{Ku}  Kukavica considered the more general problem
\begin{equation}\label{e1}
\Delta u = V(x) u,
\end{equation}
where $V\in W^{1, \infty}$, and showed that the maximal vanishing order  of $u$ is bounded above by $C( 1+ ||V||_{W^{1, \infty}})$. He also conjectured that the rate of vanishing order of $u$ is less than or equal to $C(1+ ||V||_{L^{\infty}}^{1/2})$, which agrees with the Donnelly-Fefferman result when $V = - \lambda$. Employing Carleman estimates,  Kenig in \cite{K} showed that the rate of  vanishing order  of $u$  is less than $C(1+ ||V||_{L^{\infty}}^{2/3})$, and that furthermore the exponent $\frac{2}{3}$ is sharp for complex  potentials $V$. 

Recently, the rate of vanishing order of $u$ has been shown to be less than $C(1+ ||V||_{W^{1, \infty}}^{1/2})$  independently by Bakri in \cite{Bk} and Zhu in \cite{Zhu1}. Bakri's approach is based on  an extension of the Carleman method in \cite{DF}. In this connection, we also quote the recent interesting paper by R\"uland \cite{Ru}, where Carleman estimates are used to obtain related quantitative unique continuation results for nonlocal Schr\"odinger operators such as $(-\Delta)^{s/2} + V$. On the other hand, Zhu's approach is based on a variant of the frequency function approach  employed by Garofalo and Lin in \cite{GL1}, \cite{GL2}), in the context of strong  unique  continuation problems. Such variant consists in studying the growth properties of the following average of the Almgren's height function 
\[
H(r) = \int_{B_r(x_0)} u^2 (r^2 - |x-x_0|^2)^\alpha dx, \ \ \ \ \ \ \ \ \alpha> - 1,
\]
first introduced by Kukavica in \cite{Ku2} to study quantitative unique continuation and vortex degree estimates for solutions of the Ginzburg-Landau equation. 

In \cite{Bk} and  \cite{Zhu1} it was assumed that $u$ be a solution in $B_{10}$ to 
\begin{equation}
\Delta u= Vu,
\end{equation}
with $||V||_{W^{1, \infty}} \leq M$ and  $||u||_{L^{\infty}} \leq C_0$, and that furthermore $\sup_{B_1} |u| \geq 1$. Then, it was proved that $u$ satisfies the sharp growth estimate
\begin{equation}\label{gn}
||u||_{L^{\infty}(B_r)} \geq B r^{C(1 + \sqrt{M})},
\end{equation}
where $B, C$ depend only on $n$ and $C_0$. 

In this note  we establish an analogous  quantitative uniqueness result at the boundary of a $C^{1,\operatorname{Dini}}$ domain for elliptic operators with variable coefficients in a borderline situation.  Our main result can be stated as follows.

\begin{thrm}\label{main}
Let $\Om$ be a $C^{1,\operatorname{Dini}}$ domain and suppose that $A(x) \in C^{0,1}(\overline{\Om})$ be such that
there exists $\lambda > 0$ for which for every $x\in \overline \Om$ and every $\xi\in \Rn$ one has
\begin{equation}\label{hypA2}
\lambda |\xi|^2 \leq <A(x)\xi,\xi>  \leq  \lambda^{-1} |\xi|^2,
\end{equation}
and for which
\begin{equation}\label{hypA1}
||A||_{C^{0,1}(\overline{\Om})} \leq K.
\end{equation} 
Let $V\in W^{1,\infty}(\Om)$, with $||V||_{W^{1, \infty}(\Om)} \le M$. 
Let $u\in W^{1,2}(\Om)\cap L^\infty(\Om)$ be a weak solution in $\Om$ to the equation
\begin{equation}\label{m1}
\operatorname{div}(A(x) Du)  = V(x) u,
\end{equation}
and assume that $||u||_{L^{\infty}(\Om)} \leq C_0$.
Given  an open set $\Gamma \subset \partial \Om$, suppose that $u \in C(\Om \cup \Gamma)$ and that  $u$ vanishes on $\Gamma$.  Let $x_0 \in \Gamma$ be such that $\partial \Om \cap B_2(x_0)  \subset \Gamma$, and for which $\sup_{\Om \cap B_1(x_0)} |u| \geq 1$. Then, there exist constants  $B, C$, and $R_0>0$, depending on $n,  \lambda, K, C_0,$ and the $C^{1,\operatorname{Dini}}$-character of $\Om$,  such that for all $0<r<R_0$ one has
\begin{equation}\label{m2}
||u||_{L^{\infty}(\Om \cap B_r(x_0))} \geq B r^{C(1 + \sqrt{M})}.
\end{equation}

\end{thrm}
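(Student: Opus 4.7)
The plan is to adapt Zhu's variant of Almgren's frequency method from \cite{Zhu1} to the boundary setting, with the $C^{1,\operatorname{Dini}}$ regularity of $\partial\Om$ playing the role of the integrable modulus needed to close the monotonicity computation at the boundary.

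First, I would flatten the boundary locally: choose a $C^{1,\operatorname{Dini}}$ diffeomorphism $\Phi$ sending $\Om\cap B_{2\rho}(x_0)$ onto a half-ball $B_{2\rho}^+\subset\{x_n>0\}$ and $\Gamma\cap B_{2\rho}(x_0)$ onto the flat piece $\{x_n=0\}\cap B_{2\rho}$. The pulled-back equation $\operatorname{div}(\tilde A D\tilde u)=\tilde V\tilde u$ has coefficients $\tilde A$ that are Lipschitz on the closed half-ball, with a bound controlled by $K$ and the $C^{1,\operatorname{Dini}}$-character of $\Om$. A further linear change of variables normalizes $\tilde A(0)=I$, and via the standard boundary-normal correction (replacing $\tilde A$ by $J^{-1}\tilde A J^{-t}$ for an appropriate smooth $J$) one can arrange $\tilde A(x',0)e_n=e_n$ on the flat portion. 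This last condition makes odd reflection $\tilde u(x',x_n)\longmapsto -\tilde u(x',-x_n)$ produce a weak solution $\bar u$ of a divergence-form equation $\operatorname{div}(\bar A D\bar u)=\bar V\bar u$ in the full ball $B_{2\rho}$, with $\bar A$ still Lipschitz on each side of $\{x_n=0\}$ and continuous across it, and $\|\bar V\|_{W^{1,\infty}}\lesssim M$.

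Next, I would introduce the Kukavica-type weighted height and Dirichlet integrals centred at the origin,
\begin{equation*}
H(r)=\int_{B_r}\bar u^{2}(r^{2}-|x|^{2})^{\alpha}\mu(x)\,dx,\qquad D(r)=\int_{B_r}\langle\bar A D\bar u,D\bar u\rangle(r^{2}-|x|^{2})^{\alpha+1}\,dx,
\end{equation*}
where $\alpha>-1$ is chosen large enough to tame boundary traces on $\partial B_r$, and $\mu$ is the usual conformal factor arranged so that the vector field $\bar A(x)x$ differs from the Euclidean radial field by a term that vanishes at the origin. The frequency $N(r)=D(r)/H(r)$ should then obey an almost-monotonicity formula
\begin{equation*}
N'(r)\ge -C\bigl(1+\sqrt{M}\bigr)\bigl(N(r)+1\bigr),
\end{equation*}
derived by differentiating $\log H$ and $\log D$, using the equation, and applying a Rellich--Pohozaev identity with the vector field $\bar A(x)x$. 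The $\sqrt{M}$, as opposed to $M$ or $M^{2/3}$, dependence comes from the Cauchy--Schwarz step $|\int\bar V\bar u^2 w| \le \|\bar V\|_\infty^{1/2}H^{1/2}D^{1/2}$ as in \cite{Zhu1}, which is insensitive to whether one works in the interior or at a flat piece of the boundary.

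Integrating the monotonicity inequality yields a uniform frequency bound $N(r)\le C(1+\sqrt{M})$ for $0<r<R_0$, which upgrades in the standard way to a doubling inequality $H(2r)\le 2^{C(1+\sqrt{M})}H(r)$. Combining this with the normalization $\sup_{\Om\cap B_1(x_0)}|u|\ge 1$, with a boundary $L^2$--$L^\infty$ estimate applied to $\bar u$, and with the quantitative bilipschitz control of $\Phi$, produces \eqref{m2}. The main technical obstacle is controlling the error terms in the frequency derivative generated by the fact that after flattening $D\Phi$ is only Dini continuous and $\bar A$ is only Lipschitz with a jump in its normal derivative across $\{x_n=0\}$; both issues force perturbative terms of the form $\int_{B_r}\omega(|x|)\,|x|^{-1}\bar u^{2}\,(r^2-|x|^2)^{\alpha}\,dx$, where $\omega$ is the Dini modulus of $\partial\Om$. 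Integrability of $\omega(s)/s$ is exactly what keeps these perturbations summable when integrated from $0$ to $R_0$, and is the reason the theorem is stated under the $C^{1,\operatorname{Dini}}$ hypothesis rather than merely $C^{1}$.
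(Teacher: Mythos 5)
There is a genuine gap, and it sits at the very first step: the claim that after flattening the boundary with a $C^{1,\operatorname{Dini}}$ diffeomorphism $\Phi$ the pulled-back matrix $\tilde A$ is ``Lipschitz on the closed half-ball.'' It is not. Since $\partial\Om$ is only $C^{1,\operatorname{Dini}}$, $D\Phi$ is merely Dini continuous and in general not differentiable, so $\tilde A=(D\Phi\,A\,D\Phi^{t}/|\det D\Phi|)\circ\Phi^{-1}$ is only $C^{0,\operatorname{Dini}}$, with no control whatsoever on $D\tilde A$; the same is true of the corrector $J$ and of the reflected matrix $\bar A$. This is fatal for the frequency computation you propose to run on $\bar u$: the first variation of the weighted Dirichlet integral uses the Rellich--Payne--Weinberger identity, whose error term is $Z_k D_k \bar a_{ij}\,D_i\bar u\,D_j\bar u$, i.e.\ an \emph{energy-sized} term multiplied by the gradient of the coefficients. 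With coefficients that are only Dini continuous this term is not even defined, and it certainly is not of the benign form $\int \omega(|x|)|x|^{-1}\bar u^{2}(r^2-|x|^2)^{\alpha}$ that you claim; so the Dini integrability of $\omega(s)/s$ does not close the monotonicity argument, and indeed strong unique continuation can fail for divergence-form operators with coefficients of low regularity, so no soft fix is available. (A reflection after flattening is legitimate in this problem only for statements that need bounded measurable coefficients, e.g.\ the local $L^2$--$L^\infty$ bound; the paper uses it exactly there and nowhere else.)

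The paper's proof is organized precisely to avoid this loss of regularity: it never flattens for the monotonicity step. It keeps the original Lipschitz matrix $A$ and the original domain, normalizes $A(y_0)=I_n$ by the affine map $T_{y_0}$ at a well-chosen interior point $y_0=-4\Lambda(r)r\,\nu(x_0)$, and uses the quantitative star-shapedness of $\Om\cap B_{r-a}(y_0)$ (valid at every small scale for a Dini domain) so that the boundary integral produced by the Rellich identity has a favorable sign; this yields almost-monotonicity of $e^{C_1 r}(N(r)+C_2Mr^2)$ with the Kukavica--Zhu weight and $\alpha=\sqrt M$, hence three-sphere inequalities with constants $e^{C\sqrt M}$. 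The Dini hypothesis then enters in a different place than in your outline: at each dyadic scale $r_0/2^i$ the star-center and the affine normalization distort the radii by factors $1\pm c\Lambda(r_0/2^i)$, and the resulting product of three-sphere exponents converges exactly because $\sum_i\Lambda(r_0/2^i)<\infty$, which is equivalent to \eqref{e20}. Your proposal also leans on a claimed a priori bound $N(r)\le C(1+\sqrt M)$, but in the boundary setting such a bound is not free: it has to be manufactured from the normalization $\sup_{\Om\cap B_1(x_0)}|u|\ge1$ via a chain-of-balls propagation of smallness (Lemma \ref{L:vegen}), which your sketch omits. To repair your approach you would either have to prove a frequency monotonicity theorem for coefficients whose gradient is not controlled (not available), or abandon the flattening and work intrinsically with star-shapedness as the paper does.
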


In the literature $C^{1,\operatorname{Dini}}$ domains are often referred to as Dini domains, see \cite{KN}, \cite{AE}. We mention that the strong unique continuation property at the boundary for $C^{1,\operatorname{Dini}}$ domains and for elliptic equations, but not the order of vanishing,  was established in \cite{AE} by adapting the frequency function approach in \cite{GL1}, \cite{GL2}. We also refer to \cite{KN} for a simpler proof of the result in \cite{AE}. In \cite{AE} it was shown that $e^{Cr} N(r)$ is monotone, where $N(r)$ is the frequency used by Garofalo and Lin.  However, similarly to the interior estimates in \cite{GL2}, the constant $C$ depends on the norm of $V$,  and since it appears in the exponent, it would  only give an upper bound on the vanishing order of $u$  proportional to $e^{CM}$, which is not the optimal bound  $1 + \sqrt{M}$. As we have mentioned above, Kukavica in \cite{Ku}  was able to remove the dependence on $V$ from the exponential by considering a variant of the frequency, but he was only able to obtain an upper bound proportional to  $C(1+ ||V||_{W^{1, \infty}})$.  

In order to establish  the estimate \eqref{m2} in Theorem \ref{main} above we borrow some of the ideas in \cite{Ku2} and \cite{Zhu1}, and adapt them to our different situation. Simultaneously, we also use some ideas from \cite{KN}. Nevertheless, the proof of Theorem \ref{main} entails a substantial amount of novel work. This is mainly due to the fact that we are working at the boundary, and that, unlike \cite{Ku2}, \cite{KN} and \cite{Zhu1}, we are dealing with a variable coefficient operator. This forces one to deal with some delicate uniformity matters which arise at several steps in the process. Loosely speaking, the essential idea of the proof of Theorem \ref{main} is to obtain some analogue of the interior estimates in \cite{Zhu1} first for starshaped domains. Then, for a given $C^{1,\operatorname{Dini}}$ domain, at each scale $r$, one can find a starshaped domain as in \cite{KN}  where such an estimate can be obtained, and then we iterate  the estimate  at every scale  by crucially  making use of the Dini modulus of continuity of the normal at the boundary. This allows us  to  obtain uniform bounds on the constants involved, see Section \ref{S:gen} below. We mention as well that, as opposed to the case when $\Delta u=0$, that was dealt with in \cite{KN},  in our situation the presence of the potential $V$ does not allow a pure monotonicity of the modified frequency $N(r)$ defined in \eqref{genf} below,  but that of a perturbed one as in Theorem \ref{vimp}. Therefore,  the scalings  have to be chosen differently  from \cite{KN} in the corresponding iteration argument of our  proof.

The paper is organized as follows. In Section \ref{S:moving}  we introduce a change of coordinates  which allows  to normalize the situation  needed to prove  the relevant monotonicity of the variant of Almgren's frequency  as in  Theorem \ref{vimp}. In Section \ref{S:dini} we introduce the relevant framework and also collect some local geometric properties of $C^{1,\operatorname{Dini}}$ domains which play a crucial  role in the rest of the paper. In this regard, we would like to mention that the smallness of the deviation of the normal to the boundary is the most important property that is used in our proof of Theorem \ref{main}. Such property allows to obtain the uniformity in our most technical result, Lemma \ref{imp2} below.  In Section \ref{S:fvf} we establish our main result about the monotonicity of the frequency, see Theorem \ref{vimp} below. As a consequence of such result, in Section \ref{S:3s} we derive some three-sphere lemmas at the boundary of star-shaped domains. In Section \ref{S:gm} we establish two basic growth lemmas that constitute the backbone of the proof of Theorem \ref{main}. Finally in Section \ref{S:gen} we prove our main result, Theorem \ref{main}. 

\medskip

\noindent \textbf{Acknowledgment:} We thank Gary Lieberman for kindly providing us with the reference \cite{Li}.

\section{A basic normalization and uniformity matters}\label{S:moving}

In this section we introduce a change of coordinates that will play a crucial role in the proof of Theorem \ref{main}. To elucidate this aspect we mention that the proof of Theorem \ref{main} differs substantially depending on whether the matrix $A(x) \equiv I_n$ (hereafter in this paper $I_n$ indicates the identity matrix in $\Rn$), in which case we have the standard Laplacian, or we are dealing with a genuinely variable coefficient operator. The crux of the question is that, in the latter case, the proof of the basic first variation formulas, Lemma \ref{L:fvH} and Proposition \ref{P:fvegen}, uses the normalizing assumption $A(0) = I_n$ in a crucial way. As a consequence,  this hypothesis also permeates the important monotonicity Theorem \ref{vimp}, and the ensuing three-sphere Lemmas \ref{L:3sH} and \ref{L:3sh}. However, to proceed in the analysis we need to apply these results at appropriate interior points in $\overline \Om$, where the normalizing hypothesis is not necessarily valid. The main purpose of this section is to show that, by a suitable change of coordinates, we can accomplish this situation without changing in any quantitative way the hypothesis of Theorem \ref{main}. By this we mean that we can go back and forth with our change of coordinates, while at the same time keeping under control some important uniformity aspects of the estimates involved.

Suppose that $\Om$ is a bounded open set and that $A(x)$ is a matrix-valued function in $\overline \Om$ satisfying the assumption of Theorem \ref{main} above. For a given point $z_0\in \overline \Om$ suppose that we are in the situation that $A(z_0)$ is not the identity matrix $I_n$. We consider the affine transformation $T_{z_0}: \Rn \to \Rn$ defined by
\begin{equation}\label{T}
T_{z_0}(x) = A(z_0)^{-1/2}(x-z_0).
\end{equation}
$T_{z_0}$ is a bijection from $\Om$ onto its image $\Om_{z_0} = T_{z_0}(\Om)$.
For a given function $f:\Om \to \R$ we consider the function $f_{z_0} : \Om_{z_0} \to \R$ defined by 
\begin{equation}\label{tran}
f_{z_0}(y) = f \circ T_{z_0}^{-1}(y) = f(z_0+A^{1/2}(z_0)y),\ \ \ y\in \Om_{z_0},
\end{equation}
and the matrix-valued function defined in $\Om_{z_0}$ as follows
\begin{equation}\label{cv}
A_{z_0}(y) =A^{-1/2}(z_0)A(z_0+A^{1/2}(z_0)y)A^{-1/2}(z_0).
\end{equation}
In a standard way one verifies that if $u$ is a weak solution to \eqref{m1} in $\Om$, then $u_{z_0}$ is a weak solution in $\Om_{z_0}$ to  
\begin{equation}\label{neweq}
L_{z_0} u_{z_0}= \operatorname{div}(A_{z_0}(y) D u_{z_0}) = V_{z_0}(y) u_{z_0}.
\end{equation}
It is important to notice that the potential $V_{z_0}$ satisfies in $\Om_{z_0}$ the same differentiability assumption as $V$, and that moreover
\begin{equation}\label{VV}
||V_{z_0}||_{W^{1,\infty}(\Om_{z_0})} \le C M,
\end{equation}
where $M$ is the bound on $||V||_{W^{1,\infty}(\Om)}$ in Theorem \ref{main}, and $C = C(\lambda)>0$. 

Hereafter in this paper, when we say that a constant is universal, we mean that it depends exclusively on $n$, on the ellipticity bound $\lambda$ on $A(x)$, see \eqref{hypA2} above, on the Lipschitz bound $K$ in \eqref{hypA1}, and on the $C^{1,\operatorname{Dini}}$-character of the domain $\Om$. However, a universal constant will never depend on the bound $M$ on the $W^{1,\infty}$ norm of $V$ in Theorem \ref{main}. Likewise, we will say that $O(1)$, $O(r)$, etc. are universal if $|O(1)| \le C$, $|O(r)|\le C r$, etc., with $C\ge 0$ universal.

Notice that since $0 = T_{z_0}(z_0) \in \Om_{z_0}$, the change of variable $y = T_{z_0}(x)$ allows us to assume that $z_0=0$ and, more importantly, that \eqref{dev1} below hold since, by construction, we have $A_{z_0}(0)=I_n$. 
%Furthermore, if  we define     
%\begin{equation}\label{nmu}
%\mu_{z_0}(y)= \frac{<A_{z_0}(y) y,y>}{|y|^2},\ \ \ \ \ \ \ y\in \Om_{z_0},
%\end{equation}
%then we have $\mu_{z_0}(0) = 1$. 

Before proceeding we note explicitly that in passing from the matrix $A$ in $\Om$ to the matrix $A_{z_0}$ in $\Om_{z_0}$ the uniform bounds on the ellipticity change from $\lambda$ to $\lambda^2$. We have in fact for every $y\in \Om_{z_0}$ and any $v\in \Rn$
\begin{equation}\label{ne}
\lambda^2 |v|^2\ \le\ <A_{z_0}(y) v,v>\ \le\ \lambda^{-2} |v|^2.
\end{equation}
Moreover, the hypothesis \eqref{hypA2} above implies that for every $z_0\in \overline \Om$ and $x, p \in \Rn$
\begin{align}\label{sqrtA}
& \lambda^{1/2} |x - p| \le |A^{1/2}(z_0) (x-p)| \le \lambda^{-1/2} |x-p|,
\\
& \lambda^{1/2} |x-p| \le |A^{-1/2}(z_0) (x-p)| \le \lambda^{-1/2} |x-p|.
\notag
\end{align}
We can rewrite the second inequality in \eqref{sqrtA} in the following way
 \begin{align}\label{bA}
& \lambda^{1/2} |x - p| \le |T_{z_0}(x)- T_{z_0}(p)| \le \lambda^{-1/2} |x-p|,
\end{align}
or, equivalently,
\begin{equation}\label{cont}
B_{\sqrt \lambda r}(T_{z_0}(p)) \subset T_{z_0}(B_r(p)) \subset  B_{\frac{r}{\sqrt \lambda}}(T_{z_0}(p)),
\end{equation}
for any $p\in \Rn$ and $r>0$. The inclusion \eqref{cont} will play a pervasive role in the proof of the central Lemma \ref{imp2} in Section \ref{S:gm} below.

 Finally, we note that the matrix-valued function $y\to A_{z_0}(y)$ is Lipschitz continuous in $\Om_{z_0}$, and in fact from \eqref{sqrtA} and \eqref{hypA1} above we have
\begin{equation}\label{newlip}
||A_{z_0}(y) - A_{z_0}(y')|| \le K' |y - y'|,\ \ \ \ \ \ \ \ \ \ y, y'\in \Om_{z_0},
\end{equation}
where $K ' = \lambda^{-3/2} K$, where $K$ is as in \eqref{hypA1}.

\section{Local geometry of $C^{1,\operatorname{Dini}}$ domains}\label{S:dini}

 In this section we collect some local properties of $C^{1,\operatorname{Dini}}$ domains that play a pervasive role in this paper.
 Throughout the discussion, we denote by $B_{r}(x)$ the Euclidean ball of radius $r$ with center at $x$. A ball centered at $0$ will be simply denoted by $B_r$.  

\begin{dfn}\label{2.1}
A connected bounded open set $\Om \subset \Rn$ is called a \emph{$C^{1,\operatorname{Dini}}$  domain} if for each point $x_0 \in \partial \Om$ there exist a local coordinate system $(x', x_n) \in \R^{n-1} \times \R$, $R_0>0$, and a function $\varphi: \R^{n-1} \to R$ such that:
\begin{itemize}
\item[i)] $B_{R_0}(x_0) \cap \Om= \{ x \in B_{R_0}(x_0) : x_n < \varphi(x') \}$;
\item[ii)] $B_{R_0}(x_0) \cap \partial \Om= \{ x \in B_{R_0}(x_0) : x_n = \varphi(x') \}$;
\item[iii)] $|D'\varphi(x'_1) - D'\varphi(x'_2)| \leq \psi(|x'_1- x'_2|)$ where $\psi$ satisfies 
\begin{equation}\label{dini}
\int_{0}^2 \frac{\psi(r)}{r} dr < \infty.
\end{equation}
\end{itemize}
\end{dfn}
We notice that the outer unit normal at a point $(x',x_n)\in B_{R_0}(x_0) \cap \partial \Om$ is given by
\begin{equation}\label{normal}
\nu = \left(- \frac{D'\varphi}{\sqrt{1+|D'\varphi|^2}}, \frac{1}{\sqrt{1+|D'\varphi|^2}}\right).  
\end{equation}

We consider a given $C^{1,\operatorname{Dini}}$ domain $\Om$ and, for a fixed point $x_0\in \partial \Om$, we let $R_0$, $\varphi$ and $\psi$ be as in Definition \ref{2.1} above. Without loss of generality, by a translation we can suppose that $x_0 = 0$, and this forces $\varphi(0) = 0$. We denote by $\nu(x)$ the outward unit normal at  $x\in \partial \Om$. Without loss of generality, by a rotation we can also assume that $D'\varphi(0) = 0$, so that $\nu(0)= e_n = (0,...,0,1)$. Thereby, by possibly restricting its value, we can assume that $R_0>0$ is such that
\[
\underset{|x'|\le R_0}{\sup}\  \sqrt{1+|D'\varphi(x')|^2} \le \frac 32.
\] 

Throughout this paper we assume  that there exists a  non-decreasing function $r \to \Lambda(r)$ such that for every $r \in (0,R_0)$ we have
\begin{equation}\label{e'''}
\underset{x_1, x_2 \in  \partial \Om\cap B_r}{\sup}\ |\nu(x_2) - \nu(x_1)| \leq \Lambda(r).
\end{equation}
Also  from \eqref{normal} above and the Dini assumption \eqref{dini} on $D' \varphi$, we can take $\Lambda(r)$ such that 
\begin{equation}\label{e20}
\int_{0}^{R_0} \frac{\Lambda(r)}{r} dr < \infty.
\end{equation}
It should also be clear that, by the way we pick the function $\Lambda$  in \eqref{e'''}, there is no loss of generality in assuming that
\begin{equation}\label{sqrt}
\Lambda(r) \ge \sqrt r,\ \ \ \ \ \ \ \ \ \  \ 0<r\le R_0.
\end{equation} 
We now employ considerations similar to those in the proof of Lemma 3.2 in \cite{KN}. Fix any $r \in (0, R_0)$ sufficiently small.  Let $x_1 = (x'_1, \varphi(x'_1))$ and $x_2 = (x'_2, \varphi(x'_2))$ be two arbitrary points in $B_r \cap \partial \Om$. Then, one has 
\[
|D' \varphi(x'_j)| \le |\nu(x_j) - \nu(0)| \sqrt{1+|D'\phi(x_j')|^2} \le \underset{|x'|\le r}{\sup}\  \sqrt{1+|D'\varphi(x')|^2}\  \Lambda(r) \le \frac 32 \Lambda(r),
\]
provided that $0<r\le R_0$. By the mean value theorem we thus have for $0<r\le R_0$
\[
|\varphi(x_1') - \varphi(x_2')|\le \frac{3}{2} \Lambda(r) |x_1'-x_2'|.
\]
Since $\Lambda(r) \to 0 $ as $ r \to 0$, we can  assume $R_0$ is small enough so that 
\begin{equation}\label{1000}
R_0<1,\ \ \ \text{and}\ \ \ \Lambda(R_0) < \frac1{1000}.
\end{equation}
This is the first place where we use the fact that $\Om$ be $C^{1,\operatorname{Dini}}$. 
We state a simple lemma that will be needed in Section \ref{S:gm} below.

\begin{lemma}\label{L:star}
Let  $a= 4 \Lambda(r)r$, and consider the following interior point $y_0= - a \nu(0) = (0,-a)$ associated with $x_0 = 0$. Then, for $0<r\le R_0$ the set $\Om \cap B_{r-a}(y_0)$ is star-shaped with respect to $y_0$.
In addition, the following quantitative form of star-shapedness holds: for $0 <r \le R_0$ and $x \in \partial \Om \cap B_{r}$, we have
\begin{equation}\label{qtsr}
\frac{r \Lambda(r)}{2}\  \le\ <x-y_0, \nu(x)>\ \le\ 10 r \Lambda(r).
\end{equation} 
\end{lemma}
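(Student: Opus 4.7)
The plan is to prove the quantitative estimate \eqref{qtsr} first, since the star-shapedness statement will follow from it almost immediately. To this end, I would decompose
\[
<x-y_0,\nu(x)>\ =\ <x-y_0,\nu(0)>\ +\ <x-y_0,\nu(x)-\nu(0)>,
\]
and note that since $\nu(0)=e_n$ and $y_0=-a\, e_n$, the first summand equals $x_n+a=\varphi(x')+a$. The work just above the statement of the lemma yields $|D'\varphi|\le\tfrac{3}{2}\Lambda(r)$ on $B_r\cap\partial\Om$, which combined with $\varphi(0)=0$ and the mean value theorem gives $|\varphi(x')|\le\tfrac{3}{2}\Lambda(r)r$. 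With $a=4\Lambda(r)r$, the first summand therefore lies in $[\tfrac{5}{2}r\Lambda(r),\tfrac{11}{2}r\Lambda(r)]$. For the second summand, \eqref{e'''} gives $|\nu(x)-\nu(0)|\le\Lambda(r)$, while $|x-y_0|\le r+a=r(1+4\Lambda(r))$; together with the bound $\Lambda(R_0)<10^{-3}$ from \eqref{1000}, this controls the second summand in absolute value by $2r\Lambda(r)$. Adding the two estimates proves \eqref{qtsr} with room to spare.

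For the star-shapedness, I would fix a direction $\omega\in S^{n-1}$ and study the scalar function $t\mapsto\phi(y_0+t\omega)$ on $[0,r-a]$, where $\phi(x)=\varphi(x')-x_n$ is the defining function (with $\phi>0$ in $\Om$ locally). Because $|y_0+t\omega|\le a+t\le r\le R_0$, the local chart is valid along the whole segment. We have $\phi(y_0)=a>0$, so the ray starts inside $\Om$. At any zero $z_0=y_0+t_0\omega\in\partial\Om\cap\overline{B_r}$, the chain rule together with $\nu=-\nabla\phi/|\nabla\phi|$ gives
\[
\frac{d}{dt}\phi(y_0+t\omega)\Big|_{t=t_0}\ =\ -\,\frac{|\nabla\phi(z_0)|}{|z_0-y_0|}\ <\nu(z_0),\,z_0-y_0>,
\]
which is strictly negative by the lower bound in \eqref{qtsr} just established. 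Consequently $\phi(y_0+t\omega)$ can change sign at most once on $[0,r-a]$, so the set $\{t\in[0,r-a]:y_0+t\omega\in\Om\}$ is an initial interval. Given any $x\in\Om\cap B_{r-a}(y_0)$, taking $\omega=(x-y_0)/|x-y_0|$ then yields $[y_0,x]\subset\Om$, and containment of this segment in the convex ball $B_{r-a}(y_0)$ is automatic.

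The main technical point I foresee is ensuring that the local graph description of $\partial\Om$ is uniformly valid along every ray in play; this is handled by restricting to $r\le R_0$ and by the precise choice $a=4\Lambda(r)r$, which is large enough to push $y_0$ strictly into $\Om$ (so the ray starts with $\phi>0$) while being small enough that $a+t\le r$ for every $t\in[0,r-a]$. The same balance of $a$ against the oscillation $\Lambda(r)$ of the normal is exactly what forces the first summand $x_n+a$ to dominate the perturbative error $<x-y_0,\nu(x)-\nu(0)>$ in the estimate for \eqref{qtsr}, producing the positive lower bound.
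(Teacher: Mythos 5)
Your proof of the quantitative bound \eqref{qtsr} is correct and follows essentially the same route as the paper: the same decomposition $<x-y_0,\nu(x)>=<x-y_0,\nu(0)>+<x-y_0,\nu(x)-\nu(0)>$, with $<x-y_0,\nu(0)>=\varphi(x')+a$, the bound $|\varphi(x')|\le \tfrac32\Lambda(r)r$ from the mean value theorem, and $|\nu(x)-\nu(0)|\le\Lambda(r)$; the only cosmetic difference is that you control the cross term by Cauchy--Schwarz, where the paper estimates it componentwise, and both yield the stated constants via \eqref{1000}. The one genuine addition on your side is the star-shapedness: the paper merely asserts that it follows from \eqref{qtsr} (referring to Kukavica--Nystr\"om), whereas you supply the deduction explicitly via the defining function $\phi(x)=\varphi(x')-x_n$ and the sign of $\tfrac{d}{dt}\phi(y_0+t\omega)$ at boundary crossings, which is a correct and self-contained way to fill in that step; your checks that the segment stays in the chart ($a+t\le r\le R_0$) and that crossings lie in $\partial\Om\cap B_r$ so that \eqref{qtsr} applies are exactly the points that need care, and they go through.
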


\begin{proof}
Although the star-shapedness of $\Om \cap B_{r-a}(y_0)$ has already been observed in the proof of Lemma 3.2 in \cite{KN}, it will also follow from \eqref{qtsr}, to which proof now we turn. Let $x= (x', \varphi(x'))$. One has 
\[
|\varphi(x')| \leq \frac{3 \Lambda(r) r}{2}, \ \ \ \ \ \ \ \ |\nu(x) - \nu(0)| \leq \Lambda(r).
\]
Then,
\begin{align*}
& <x-y_0, \nu(x)> = <x-y_0,\nu(0)> + <x-y_0,\nu(x) - \nu(0)>
\\
& = \varphi(x') + a + <(x', \varphi(x') + a),\nu(x) - \nu(0)>  
\notag\\
& \geq 4\Lambda(r) r  -  \frac{3 \Lambda(r) r}{2}  -  r\Lambda(r) - 4 (\Lambda(r))^2 r - \frac{3 (\Lambda(r))^2 r}{2} \geq \frac{r \Lambda(r)}{2}, 
\notag
\end{align*}
where in the last inequality we have used \eqref{1000}. This establishes the bound from below in \eqref{qtsr}. In a similar way, one can show the bound from above.

\end{proof}

We next establish a stronger version of Lemma \ref{L:star} which is needed in the applications of Theorem \ref{vimp} in Section \ref{S:gm}. Given $x_0\in \partial \Om$, we continue to assume as in Lemma \ref{L:star} that $x_0 = 0$, $\nu(0) = e_n$, and as before we consider the interior point $y_0 = - a \nu(0)$ associated with $x_0 = 0$. We now use the transformation \eqref{tran}  above, with $z_0$ replaced by $y_0$, to send $y_0$ to $0$. In doing so, the point $x_0=0$ will clearly go to the point $p'=T_{y_0}(0)$. It is immediate to verify from \eqref{bA} that 
\begin{equation}\label{pprime}
|p'| = |T_{y_0}(0) - T_{y_0}(y_0)| \le \lambda^{-1/2} |y_0| = \lambda^{-1/2} a.
\end{equation}
The transformed  domain $\Om_{y_0} = T_{y_0}(\Om)$ will have $p'$ on its boundary (this point is the image of the point $x_0 = 0 \in \partial \Om$), whereas $0 = T_{y_0}(y_0)$ will be in the interior of $\Om_{y_0} $. In $\Om_{y_0}$ we thus have for the transformed matrix $A_{y_0}$, that $A_{y_0}(0) = I_n$. By slightly abusing the notation, we continue denoting by  $\Lambda$, instead of $\Lambda_{y_0}$, the function in \eqref{e'''} for the domain $\Om_{y_0}$. We notice that $\Lambda_{y_0}$ differs only by a multiplicative constant from the original $\Lambda$.

\begin{lemma}\label{L:gens}
For every $0<r \le R_0$ the set $\Om_{y_0} \cap B_{\sqrt \lambda (r-a)}$ is \emph{generalized star-shaped} with respect to $0$ and the matrix $A_{y_0}$, in the sense that for every $y\in \partial \Om_{y_0} \cap B_{\sqrt \lambda (r-a)}$ one has
\begin{equation}\label{St1}
<A_{y_0}(y) y,\tilde N(y)>\ \ge\ 0,
\end{equation}
where $\tilde N(y)$ denotes an outer normal in $y$.
\end{lemma}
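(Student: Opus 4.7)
The plan is to pull the assertion back to $\Omega$ via $T_{y_0}$ and reduce the generalized star-shapedness to the quantitative star-shapedness already obtained in Lemma \ref{L:star}, using the Lipschitz hypothesis on $A$ to control the error produced by variable coefficients. Setting $B=A^{1/2}(y_0)$ (which is symmetric and positive definite) and writing $x = y_0 + By$, a point $y\in \partial \Om_{y_0}$ corresponds to $x\in \partial \Om$. First I would note that if $\nu(x)$ is the outer unit normal to $\Om$ at $x$, then the outer normal to $\Om_{y_0}$ at $y$ is parallel to $B\nu(x)$: this comes from the standard fact that if $\partial \Om$ is a level set of $f$, then $\partial \Om_{y_0}$ is a level set of $f\circ T_{y_0}^{-1}$, whose gradient at $y$ equals $B\,\nabla f(x)$.

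The key algebraic step is a direct computation: using $A_{y_0}(y)=B^{-1}A(x)B^{-1}$, $y=B^{-1}(x-y_0)$, and the symmetry of $B$,
\[
\langle A_{y_0}(y)\,y,\ B\nu(x)\rangle = \langle B^{-1}A(x)B^{-2}(x-y_0),\ B\nu(x)\rangle = \langle A(x)A^{-1}(y_0)(x-y_0),\ \nu(x)\rangle .
\]
Thus, up to the positive factor $|B\nu(x)|^{-1}$, it suffices to show that the right-hand side is nonnegative. I would split it as
\[
\langle x-y_0,\nu(x)\rangle + \langle (A(x)-A(y_0))\,A^{-1}(y_0)(x-y_0),\ \nu(x)\rangle.
\]

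For the leading term, I would verify that $x\in \partial \Om\cap B_r$: from \eqref{cont} and $y\in B_{\sqrt\lambda(r-a)}$ we get $|x-y_0|=|By|\le\lambda^{-1/2}|y|\le r-a$, so $|x|\le r$ and Lemma \ref{L:star} applies to give $\langle x-y_0,\nu(x)\rangle \ge \tfrac{r\Lambda(r)}{2}$. For the error, the Lipschitz bound \eqref{hypA1} and the ellipticity lower bound \eqref{hypA2} yield
\[
\bigl|\langle (A(x)-A(y_0))A^{-1}(y_0)(x-y_0),\nu(x)\rangle\bigr| \le \frac{K}{\lambda}\,|x-y_0|^2 \le \frac{K}{\lambda}\,r^2.
\]

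Combining, the full expression is bounded below by $\tfrac{r\Lambda(r)}{2} - \tfrac{K}{\lambda}r^{2}$. Since \eqref{sqrt} gives $\Lambda(r)\ge\sqrt r$, this is $\ge r\sqrt r\bigl(\tfrac12 - \tfrac{K}{\lambda}\sqrt r\bigr)$, which is nonnegative once $R_0$ is small enough in a way depending only on universal quantities (this smallness can be absorbed into the standing assumption on $R_0$, together with \eqref{1000}). The main obstacle is mostly bookkeeping: one must carefully track how the outer normal and the $A$-weighted radial vector transform under the affine map $T_{y_0}$, and ensure that the reduction lands in a range of $x$ where Lemma \ref{L:star} is valid and where the universal smallness of $R_0$ suffices to swallow the Lipschitz error term; the role of \eqref{sqrt} is precisely to guarantee that $r\Lambda(r)$ dominates the $O(r^2)$ error.
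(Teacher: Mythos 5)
Your proposal is correct and follows essentially the same route as the paper's proof: pull the normal back through $T_{y_0}$ (so $\tilde N(y)$ is parallel to $A^{1/2}(y_0)\nu(x)$), reduce the leading term to the quantitative star-shapedness \eqref{qtsr} of Lemma \ref{L:star}, and absorb the Lipschitz error of size $O(r^2)$ using $\Lambda(r)\ge \sqrt r$ and a universal smallness of $R_0$. The only (cosmetic) difference is that you perturb $A(x)$ around $A(y_0)$ in the original coordinates, whereas the paper perturbs $A_{y_0}(y)$ around $A_{y_0}(0)=I_n$ via \eqref{newlip}; the two decompositions are equivalent.
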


\begin{proof}

We observe that by Lemma \ref{L:star} we know that for every $0<r\le R_0$, the set $\Om \cap B_{r-a}(y_0)$ is star-shaped with respect to $y_0$ and \eqref{qtsr} above hold.
We first claim that:
\begin{equation}\label{tss}
\Om_{y_0} \cap B_{\sqrt \lambda (r-a)}  \ \  \text{is star-shaped with respect to}\  0 \ \ \text{for}\ 0<r\le R_0. 
\end{equation}
In order to prove the claim, we notice that by the left-inclusion in \eqref{cont} above, it suffices to verify that
\begin{equation}\label{tss2}
\Om_{y_0} \cap T_{y_0}(B_{r-a}(y_0)) \ \  \text{is star-shaped with respect to}\  0 \ \ \text{for}\ 0<r\le R_0. 
\end{equation}
Now, to prove \eqref{tss2} it suffices to prove that if $0<r\le R_0$, and if $\tilde N(y)$ denotes a (non-unit) outer normal to $\partial \Om_{y_0} \cap T_{y_0}(B_{r-a}(y_0))$, then we have $<y,\tilde N(y)> \ge 0$. It is easy to recognize that $\tilde N(y) = A(y_0)^{1/2} N(T_{y_0}^{-1}(y))$, where $N$ indicates a non-unit outer normal field on $\partial \Om$. We thus have
\begin{align*}
& <y,\tilde N(y)>\ =\ <T_{y_0}(T_{y_0}^{-1}(y)),A(y_0)^{1/2} N(T_{y_0}^{-1}(y))>\ 
 =\ <A(y_0)^{1/2}T_{y_0}(T_{y_0}^{-1}(y)),N(T_{y_0}^{-1}(y))> 
 \\
 &\ =\ <T_{y_0}^{-1}(y)-y_0,N(T_{y_0}^{-1}(y))> \ \ge\ 0,
\end{align*}
where in the last inequality we have used  \eqref{qtsr} which  holds for $x \in \partial \Om \cap B_r$ and therefore by triangle inequality also holds for $x = T_{y_0}^{-1}(y) \in \partial \Om \cap B_{r-a} (y_0)$.
This proves \eqref{tss2}, and therefore \eqref{tss}. 

We next want to show that, by possibly further restricting the value of $R_0\in (0,1)$, we can accomplish \eqref{St1}. For this we are going to use the quantitative star-shapedness expressed by\eqref{qtsr} above. Let $y\in \partial \Om_{y_0} \cap B_{\sqrt \lambda (r-a)}(0)$. Since $A_{y_0}(0)=I_n$, we have $A_{y_0}(y) = I_n + (A_{y_0}(y) - A_{y_0}(0))$. Now, \eqref{newlip} and $|p'| \leq \lambda^{-1/2} a$ (see \eqref{pprime} above) give 
\[
||A_{y_0}(y) -A_{y_0}(0)|| \le K' |y| \le K' |y-p'| + K' |p'| \le K' (\sqrt \lambda (1- 4\Lambda(r)) + \frac{4}{\sqrt \lambda} \Lambda(r)) r \le C r, 
\]
for some universal $C>0$. On the other hand, \eqref{qtsr}  gives 
\[
10 r\Lambda(r)\ \geq\ <y,\tilde N(y)>\  \geq\ \frac{r \Lambda(r)}{2}.
\]
Therefore, by possibly restricting further $R_0$, we have for $0<r\le R_0$
\begin{align*}
& <A_{y_0}(y) y,\tilde N(y)>= < y,\tilde N(y)> + <(A_{y_0}(y) - A_{y_0}(0))y,\tilde N(y)>
\\
& \geq\ \frac{r \Lambda(r)}{2} - C r^2\ \geq\ \frac{r^{3/2}}{2} - C r^2\ \ge\ 0,
\end{align*}
where in the second to the last inequality we have used \eqref{sqrt} above. This proves  \eqref{St1}.

\end{proof}

\section{First variation formulas and adjusted monotonicity of the frequency}\label{S:fvf}

The principal objective of this section is establishing the monotonicity Theorem \ref{vimp} below. We begin with some preliminary material. 
Given a point $z_0 \in \overline{\Om}$, for $x\in \Om$ we let $r_{z_0}(x) = |x - z_0|$.  Also, we will adopt the summation convention over repeated indices. For  $z_0\in \overline \Om$, we let $ B_{z_0}(x) = A(x) - A(z_0)$. Let us notice that \eqref{hypA1} above gives 
\begin{equation}\label{B}
||B_{z_0}(x)|| \le C |x - z_0|,
\end{equation}
with $C>0$ universal.
When $z_0$ is fixed in a certain context, we will routinely write $B(x)$, instead of $B_{z_0}(x)$.
The next lemma expresses a simple, yet important fact.

\begin{lemma}\label{L:div(Agradr)} 
Suppose $A(z_0) = I_n$. Then, for $x\not= z_0$, one has 
\begin{equation}\label{perturbation}
L r_{z_0}= \emph{div}(A(x)D r_{z_0})=\frac{n-1}{r_{z_0}} +O(1),
\end{equation}
where $O(1)$ is universal.
In particular, $Lr_{z_0} \in L^1_{loc}(\Rn)$.
\end{lemma}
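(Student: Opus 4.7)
The plan is to split $A(x)$ as $A(x) = A(z_0) + B_{z_0}(x) = I_n + B(x)$ using the normalization $A(z_0) = I_n$, and then compute $L r_{z_0}$ as the sum of a Laplacian piece and a perturbation coming from $B$. More precisely, I would write
\begin{equation*}
L r_{z_0} = \operatorname{div}(D r_{z_0}) + \operatorname{div}(B(x) D r_{z_0}) = \Delta r_{z_0} + \operatorname{div}(B(x) D r_{z_0}),
\end{equation*}
and treat the two summands separately. The first is classical: $\Delta r_{z_0} = \tfrac{n-1}{r_{z_0}}$, which produces the leading singular term in \eqref{perturbation}. All the work then goes into showing that the second summand is a universally bounded $O(1)$.

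For the perturbation, I would expand the divergence componentwise. Writing $B(x) = (b_{ij}(x))$, we have
\begin{equation*}
\operatorname{div}(B(x) D r_{z_0}) = (\partial_i b_{ij})\, \partial_j r_{z_0} \; + \; b_{ij}(x)\, \partial_i \partial_j r_{z_0}.
\end{equation*}
The first term is immediately universally bounded, since $|D r_{z_0}| = 1$ and the Lipschitz assumption \eqref{hypA1} forces $|\partial_i b_{ij}| \le K$ almost everywhere. For the second term I would use the explicit Hessian $\partial_i \partial_j r_{z_0} = r_{z_0}^{-1}\delta_{ij} - r_{z_0}^{-3}(x-z_0)_i(x-z_0)_j$, which gives
\begin{equation*}
b_{ij}(x)\, \partial_i \partial_j r_{z_0} = \frac{\operatorname{tr} B(x)}{r_{z_0}} \; - \; \frac{\langle B(x)(x-z_0),\, x-z_0\rangle}{r_{z_0}^3}.
\end{equation*}
Both numerators are controlled by $\|B(x)\|\, r_{z_0}^{\,\cdot}$, and the key input is the estimate \eqref{B}, namely $\|B_{z_0}(x)\| \le C|x-z_0|$ with $C$ universal. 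This kills the extra power of $r_{z_0}$ in the denominators and yields a universal bound $|b_{ij} \partial_i\partial_j r_{z_0}| \le C'$.

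Combining the two contributions gives the claimed identity \eqref{perturbation}. Finally, the $L^1_{\mathrm{loc}}$ statement is immediate: $r_{z_0}^{-1}$ is locally integrable on $\Rn$ (for $n\ge 2$) and $O(1)$ is bounded, so $Lr_{z_0} \in L^1_{\mathrm{loc}}(\Rn)$. There is really no major obstacle here; the only thing to be careful about is to keep track that the constants in the $O(1)$ term depend exclusively on $n$, $\lambda$ and $K$, i.e., are universal in the sense specified in Section \ref{S:moving}, and this follows because both \eqref{hypA1} and \eqref{B} are invoked with universal constants.
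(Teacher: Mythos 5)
Your proposal is correct and follows essentially the same route as the paper: split $A(x)=I_n+B_{z_0}(x)$, use $\Delta r_{z_0}=\frac{n-1}{r_{z_0}}$, bound $D_i(b_{ij})\,D_j r_{z_0}$ via the Lipschitz bound \eqref{hypA1} (a.e.\ differentiability by Rademacher), and control $b_{ij}D_{ij}r_{z_0}$ by combining the $O(1/r_{z_0})$ size of the Hessian of $r_{z_0}$ with the estimate \eqref{B}, so that all constants are universal. Your explicit trace/quadratic-form computation is just a slightly more detailed rendering of the same step the paper does in one line, and the $L^1_{loc}$ conclusion is handled identically.
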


\begin{proof} 
We have 
\[
\text{div}(A(x)D r_{z_0})=\text{div}(A(z_0)D r_{z_0})+\text{div}(D r_{z_0})=\Delta r_{z_0}+\text{div}(B(x)D r_{z_0})=\frac{n-1}{r_{z_0}}+\text{div}(B(x) D r_{z_0}).
\]
Now, if  $B(x) = [b_{ij}(x)]$, we have
\[
\text{div}(B(x) D r_{z_0})= D_i\left(b_{ij}(x) D_j r_{z_0}\right)= D_i(b_{ij}) D_j r_{z_0} + b_{ij} D_{ij} r_{z_0}.
\]
From \eqref{hypA1} and the Rademacher-Stepanov theorem we have $D_i(b_{ij})= O(1)$, with $O(1)$ universal. By \eqref{B} we find $b_{ij} D_{ij} r_{z_0} = O(1)$, with $O(1)$ universal. In conclusion, div$(B(x)D r_{z_0}) = O(1)$. This gives
\[
\operatorname{div}(A(x)D r_{z_0})=\frac{n-1}{r_{z_0}}+O(1).
\]

\end{proof}

We next introduce the conformal factor
\begin{equation}\label{mu}
\mu_{z_0} (x)= <A(x) D r_{z_0}(x), D r_{z_0}(x)> = \frac{<A(x)(x-z_0), x-z_0>}{|x-z_0|^2}.
\end{equation}
Let us observe explicitly that when $A \equiv I_n$ we have $\mu_{z_0} \equiv 1$ for every $z_0\in \overline \Om$. From the assumption \eqref{hypA2} on $A$ one easily checks that
\begin{equation}\label{v0}
\lambda \leq \mu_{z_0}(x) \leq \lambda^{-1},\ \ \ \ \ \ \ x\in \Om.
\end{equation}

We have the following simple lemma whose proof we omit since it is similar to that of Lemma \ref{L:div(Agradr)}.
\begin{lemma}\label{L:mu}
Suppose that $A(z_0) = I_n$.
Then, one has
\begin{itemize}
\item[(1)] $\mu_{z_0}(z_0)=1$,
\item[(2)] $|1-\mu_{z_0}(x)|\le  C|x-z_0|,$
\item[(3)] $|D \mu_{z_0}|\le C$,
\end{itemize}
where $C>0$ is universal.
\end{lemma}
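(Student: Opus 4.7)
The plan is to mirror Lemma \ref{L:div(Agradr)} by writing $A(x) = I_n + B_{z_0}(x)$ with $B_{z_0}(x) = A(x) - A(z_0)$, and to exploit the pointwise control $||B_{z_0}(x)|| \le C|x - z_0|$ from \eqref{B}. Setting $v = x - z_0$, this gives the key identity
\begin{equation*}
\mu_{z_0}(x) = \frac{<(I_n + B_{z_0}(x))v, v>}{|v|^2} = 1 + \frac{<B_{z_0}(x) v, v>}{|v|^2},
\end{equation*}
from which all three assertions will fall out uniformly.

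First I would dispatch (1) and (2): since $|<B_{z_0}(x) v, v>| \le ||B_{z_0}(x)||\, |v|^2 \le C|v|^3$, we immediately obtain $|\mu_{z_0}(x) - 1| \le C|x - z_0|$, which is (2); sending $x \to z_0$ (equivalently extending $\mu_{z_0}$ by continuity to $z_0$) then gives (1).

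The main work lies in (3), where the $1/|v|^2$ factor could \emph{a priori} cause $D\mu_{z_0}$ to blow up at $z_0$. I would write $<B_{z_0}(x) v, v> = b_{ij}(x) v_i v_j$ with summation understood, and compute
\begin{equation*}
D_k\bigl(b_{ij}(x) v_i v_j\bigr) = (D_k b_{ij}) v_i v_j + b_{ik} v_i + b_{kj} v_j.
\end{equation*}
By the Rademacher--Stepanov theorem combined with \eqref{hypA1}, each $D_k b_{ij}$ is bounded a.e. by $K$, so the first term is $O(|v|^2)$; the two remaining terms are each $O(||B_{z_0}||\, |v|) = O(|v|^2)$ by \eqref{B}. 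Quotient-rule expansion of $D\mu_{z_0}$ then yields a main contribution of order $O(|v|^2)/|v|^2 = O(1)$ and a second term $-2 v_k <B_{z_0}(x) v, v>/|v|^4$ of order $O(|v|) \cdot O(|v|^3)/|v|^4 = O(1)$. The hard part is precisely this last balance: it is the first-order vanishing of $B_{z_0}$ at $z_0$ --- equivalently the Lipschitz continuity of $A$ --- that prevents the apparent $O(1/|v|)$ blow-up and produces a universal constant depending only on $\lambda$ and $K$.
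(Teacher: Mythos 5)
Your proposal is correct and follows essentially the route the paper intends: the paper omits the proof of this lemma precisely because it is "similar to that of Lemma \ref{L:div(Agradr)}", i.e.\ it rests on the same decomposition $A(x)=I_n+B_{z_0}(x)$ with $||B_{z_0}(x)||\le C|x-z_0|$ and the Rademacher--Stepanov bound on $Db_{ij}$, which is exactly what you use. Your quotient-rule computation for (3), showing the apparent $O(1/|v|)$ term is neutralized by the first-order vanishing of $B_{z_0}$ at $z_0$, is the right (a.e.) argument and yields a universal constant.
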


We now introduce a vector field which plays a special role in what follows. With $\mu_{z_0}$ as above, we define 
\begin{equation}\label{Z}
Z_{z_0}(x) = r_{z_0}(x) \frac{A(x) D r_{z_0}}{\mu_{z_0}(x)}=\frac{A(x)(x-z_0)}{\mu_{z_0}(x)}.
\end{equation}
A crucial property of $Z_{z_0}$ is that, denoting by $\nu$ the outer unit normal to the sphere  $\partial B_r(z_0)$, we have 
\begin{equation}\label{ZSr}
<Z_{z_0},\nu>= r_{z_0} \frac{<A(x) D r_{z_0},D r_{z_0}>}{\mu_{z_0}} \equiv r,\ \ \ \ \text{on}\  \partial B_r(z_0).
\end{equation}

Another important fact concerning the vector field $Z_{z_0}$ is contained in the following

\begin{lemma}\label{L:ZZ} 
Suppose that $A(z_0) = I_n$. There exists a universal $O(r_{z_0})$ such that for every $i, j = 1,...,n$, one has
\begin{equation}\label{DZ}
D_i Z_{z_0,j} = \delta_{ij} + O(r_{z_0}).
\end{equation}
In particular, one has
\begin{equation}\label{divZ}
\operatorname{div} Z_{z_0}=n+O(r_{z_0}). 
\end{equation}
\end{lemma}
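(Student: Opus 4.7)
The plan is to compute $D_i Z_{z_0,j}$ directly from the definition and track the error terms carefully, exploiting the normalization $A(z_0)=I_n$ together with the Lipschitz bound on $A$ and Lemma \ref{L:mu}.

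Writing $y = x - z_0$ and $a_{jk}$ for the entries of $A$, the definition \eqref{Z} reads $Z_{z_0,j}(x) = \mu_{z_0}(x)^{-1}\, a_{jk}(x)\, y_k$ (summation over $k$). An application of the product/quotient rule gives, a.e.,
\begin{equation*}
D_i Z_{z_0,j} \ =\ \frac{a_{ji} + D_i(a_{jk})\, y_k}{\mu_{z_0}} \ -\ \frac{a_{jk}\, y_k\, D_i\mu_{z_0}}{\mu_{z_0}^2}.
\end{equation*}
The differentiation of $a_{jk}$ is legitimate a.e.\ by Rademacher--Stepanov applied to the Lipschitz entries, just as in the proof of Lemma \ref{L:div(Agradr)}.

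Next I would invoke four universal estimates. First, since $A$ is Lipschitz with $A(z_0)=I_n$ (via \eqref{hypA1}), one has $a_{jk}(x) = \delta_{jk} + O(r_{z_0})$; in particular $a_{ji} = \delta_{ji} + O(r_{z_0})$ and $a_{jk}\,y_k = y_j + O(r_{z_0}) = O(r_{z_0})$. Second, Rademacher--Stepanov gives $|D_i(a_{jk})|\le K$ a.e., so $D_i(a_{jk})\, y_k = O(r_{z_0})$. Third, Lemma \ref{L:mu}(1)--(2) yields $\mu_{z_0}(x) = 1 + O(r_{z_0})$, whence $1/\mu_{z_0} = 1 + O(r_{z_0})$ for $x$ close enough to $z_0$ (which suffices since all bounds are stated locally via the notation $O(r_{z_0})$). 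Fourth, Lemma \ref{L:mu}(3) supplies $|D_i\mu_{z_0}| \le C$ universal.

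Plugging these into the displayed formula, the first quotient becomes
\begin{equation*}
\frac{\delta_{ji} + O(r_{z_0})}{1 + O(r_{z_0})} = \delta_{ji} + O(r_{z_0}),
\end{equation*}
while the second term is the product of $O(r_{z_0})$, $O(1)$, and a bounded factor, hence itself $O(r_{z_0})$. This proves \eqref{DZ}. To obtain \eqref{divZ}, I would simply set $j = i$ in \eqref{DZ} and sum over $i$ from $1$ to $n$.

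The calculation is essentially routine; the only non-cosmetic point is to make sure the $O(r_{z_0})$ terms can be chosen \emph{universal} in the sense specified in Section \ref{S:moving}. This is fine because each of the four estimates above depends only on $n$, $\lambda$, $K$ and the Lipschitz/normalization data already controlled by those constants; there are no derivatives of $V$ or geometric quantities of $\partial\Om$ entering at this stage.
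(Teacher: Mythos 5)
Your proposal is correct and takes essentially the same route as the paper's proof: differentiate the quotient defining $Z_{z_0,j}$ by the product/quotient rule, then use the Lipschitz bound on $A$ (so $a_{jk}=\delta_{jk}+O(r_{z_0})$ and $|D_i a_{jk}|\le C$ a.e.\ by Rademacher--Stepanov) together with Lemma \ref{L:mu} to absorb every error term into a universal $O(r_{z_0})$, and sum over $i=j$ for \eqref{divZ}. The only cosmetic remark is that the caveat ``for $x$ close enough to $z_0$'' is unnecessary: the ellipticity bound \eqref{v0} gives $\mu_{z_0}\ge\lambda$ everywhere, so $1/\mu_{z_0}-1=O(r_{z_0})$ holds with a universal constant on all of $\Om$.
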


\begin{proof}
From \eqref{B}, \eqref{Z}, and from 2) and 3) of Lemma  \ref{L:div(Agradr)} we have for a universal $O(r_{z_0})$ 
\begin{align*}
D_i Z_{z_0,j} & = D_i\left(\frac{a_{jk} (x_k - z_{0,k})}{\mu}\right) = \frac{D_i a_{jk} (x_k - z_{0,k})}{\mu} + \frac{a_{jk} \delta_{ki}}{\mu} - \frac{a_{jk} (x_k - z_{0,k})D_i\mu}{\mu^2}
\\
& = \frac{a_{ij}}{\mu} + O(r_{z_0}) =  \frac{\delta_{ij}}{\mu} + O(r_{z_0}) = \delta_{ij} +\delta_{ij} \left(\frac{1}{\mu} - 1\right) + O(r_{z_0}) = \delta_{ij} +  O(r_{z_0}).
\end{align*} 
\end{proof}

When $z_0 = 0$ we simply write $\mu(x)$, and $Z(x)$, instead of $\mu_0(x)$ and $Z_0(x)$.  After these preliminaries, under the hypothesis of Theorem \ref{main} we consider a weak solution of the equation \eqref{m1}. 

\begin{dfn}\label{D:I}
For $z_0\in \overline \Om$ and $r>0$ we define the generalized \emph{height} of $u$ in the ball $B_r(z_0)$ as 
\begin{equation}\label{vh}
H_{z_0}(r)= \int_{\Om \cap B_r(z_0)} u^2  (r^2 - r_{z_0}(x)^2)^{\alpha} \mu_{z_0} = \int_{\Om \cap B_r(z_0)} u^2 (r^2 - |x-z_0|^2)^{\alpha} \mu_{z_0},
\end{equation}
where $\alpha>-1$ is to be fixed later. The generalized \emph{energy} of $u$ in $B_r(z_0)$ is defined by 
\begin{align}\label{vi}
I_{z_0} (r) & = \int_{\Om \cap B_r(z_0)}   <A Du, Du> (r^2 - |x-z_0|^2)^{\alpha+1}  + \int_{\Om \cap B_r(z_0)}   Vu^2 (r^2 - |x-z_0|^2)^{\alpha+1}.
\end{align}
The generalized \emph{frequency} of $u$ in $B_r(z_0)$ is given by
\begin{equation}\label{genf}
N_{z_0}(r) = \frac{I_{z_0} (r)}{H_{z_0}(r)}.
\end{equation}
When $z_0 = 0$ we agree to simply write $H(r), I(r)$ and $N(r)$, instead of $H_0(r), I_0(r)$, $N_0(r)$.
\end{dfn}

Before proceeding we make the observation (important for the computations in this section) that, thanks to Theorem 5.5 in \cite{Li}, under the assumptions of Theorem \ref{main} above, we know that the weak solution $u$ of \eqref{m1} is in $C^1(\overline \Om \cap B_{1}(x_0))$. Therefore in the ensuing computations all derivatives are classical.

In Lemmas \ref{L:Ialt}, \ref{L:fvH}  and Proposition \ref{P:fvegen} below, by translation, we can without loss of generality assume that  $z_0=0$. We stress that $z_0$ need not necessarily be a point on $\partial \Om$. When for a given $r>0$ we have $\partial \Om \cap B_r(z_0)\not= \varnothing$, then in the integrations by parts we will eliminate integrals on such portion of the boundary of $\Om \cap B_r(z_0)$ by using the assumption $u = 0$ on $\Gamma$ in Theorem \ref{main}. If instead $\partial \Om \cap B_r(z_0) = \varnothing$, then $\Om \cap B_r(z_0) = B_r(z_0)$, and corresponding integrals on $\partial B_r(z_0)$ will be eliminated by the weight $(r^2 - |x-z_0|^2)^{\alpha}$ in \eqref{vh} and \eqref{vi} above. 

The following hypothesis 
\begin{equation}\label{dev1}
A(z_0) = A(0)= I_n 
\end{equation}
will be tacitly assumed as in force in Lemmas \ref{L:Ialt}, \ref{L:fvH}  and Proposition \ref{P:fvegen}. 
We will need the following alternative expression of the generalized energy $I(r)$.

\begin{lemma}\label{L:Ialt}
For every $r\in (0,1)$ one has 
\begin{align}\label{alt}
I(r) = & 2 (\alpha +1) \int_{\Om \cap B_r} u <ADu,x> (r^2 - |x|^2)^{\alpha} 
 = 2 (\alpha +1) \int_{\Om \cap B_r} u\ Z u\ (r^2 - |x|^2)^{\alpha}\  \mu.
\end{align}
\end{lemma}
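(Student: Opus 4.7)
The plan is to derive both equalities by a single integration by parts that trades the energy term $\langle ADu, Du\rangle (r^2-|x|^2)^{\alpha+1}$ for a lower-order term involving $u \langle ADu, x\rangle (r^2-|x|^2)^{\alpha}$, absorbing the potential piece via the equation $\operatorname{div}(A Du) = Vu$.

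Concretely, I would start from the pointwise identity
\begin{equation*}
\operatorname{div}\bigl(u(r^2-|x|^2)^{\alpha+1} A Du\bigr) = (r^2-|x|^2)^{\alpha+1}\langle ADu,Du\rangle + u(r^2-|x|^2)^{\alpha+1}\operatorname{div}(ADu) + u \langle ADu, D[(r^2-|x|^2)^{\alpha+1}]\rangle,
\end{equation*}
and compute $D[(r^2-|x|^2)^{\alpha+1}] = -2(\alpha+1)(r^2-|x|^2)^\alpha x$. Using $\operatorname{div}(ADu) = Vu$, this becomes
\begin{equation*}
\operatorname{div}\bigl(u(r^2-|x|^2)^{\alpha+1} A Du\bigr) = (r^2-|x|^2)^{\alpha+1}\bigl[\langle ADu,Du\rangle + Vu^2\bigr] - 2(\alpha+1)(r^2-|x|^2)^\alpha u\langle ADu,x\rangle.
\end{equation*}
Integrating over $\Om \cap B_r$ produces $I(r)$ on the right from the bracketed expression, plus the $2(\alpha+1)$-term, against a boundary integral on $\partial(\Om\cap B_r)$.

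Next I would check that the boundary integral vanishes. On $\partial B_r$ the weight $(r^2-|x|^2)^{\alpha+1}$ is zero, which kills the contribution there since $\alpha+1>0$ and $u$ is $C^1$ up to the boundary (by the cited Lieberman result). On $\partial \Om \cap B_r$, the factor $u$ vanishes because $B_r \subset B_2(x_0)$ and $u \equiv 0$ on $\Gamma$. In the case $\partial\Om\cap B_r=\varnothing$, only the $\partial B_r$ term is present and already handled. This yields the first equality
\begin{equation*}
I(r) = 2(\alpha+1)\int_{\Om\cap B_r} u\,\langle ADu,x\rangle\,(r^2-|x|^2)^\alpha.
\end{equation*}

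For the second equality, I invoke the definition of the vector field $Z$ in \eqref{Z} (with $z_0=0$), namely $Z(x) = A(x)x/\mu(x)$, so that $A(x) x = \mu(x)\,Z(x)$. Using symmetry of $A$,
\begin{equation*}
\langle ADu, x\rangle = \langle Du, Ax\rangle = \mu\,\langle Du, Z\rangle = \mu\, Zu,
\end{equation*}
which when substituted into the first identity gives the second form. No serious obstacle is expected here; the only points requiring care are (a) the regularity needed to justify the integration by parts, which is supplied by the $C^1(\overline{\Om}\cap B_1(x_0))$ regularity quoted from \cite{Li}, and (b) making sure all boundary pieces are killed either by $u|_\Gamma=0$ or by the vanishing weight on $\partial B_r$.
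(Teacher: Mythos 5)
Your proof is correct and follows essentially the same route as the paper: the paper integrates by parts on $\tfrac12\operatorname{div}(A D(u^2))(r^2-|x|^2)^{\alpha+1}$, which is exactly your divergence identity since $\tfrac12 A D(u^2)=u\,A Du$, with the same use of the equation, the same vanishing of boundary terms via the weight on $\partial B_r$ and $u=0$ on $\Gamma$, and the same final conversion through $Zu\,\mu=\langle ADu,x\rangle$.
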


\begin{proof}
From \eqref{vi} and the divergence theorem we obtain
\begin{align}\label{I2}
I(r) & =  \int_{\Om \cap B_r}   <A Du, Du> (r^2 - |x|^2)^{\alpha+1}  + \int_{\Om \cap B_r}   Vu^2 (r^2 - |x|^2)^{\alpha+1},
\\
& = \frac 12 \int_{\Om \cap B_r}  \operatorname{div}(A D(u^2)) (r^2- |x|^2)^{\alpha+1}
= - \frac 12 \int_{\Om \cap B_r}  <A D(u^2), D(r^2- |x|^2)^{\alpha+1}>,
\notag\\
& = 2(\alpha+1) \int_{\Om \cap B_r} u <A Du,x> (r^2- |x|^2)^{\alpha},
\notag
\end{align}
where in the second equality we have used the equation \eqref{m1}. The conclusion of the proof now follows by observing that \eqref{Z} gives
\begin{equation}\label{Zu}
Zu\ \mu = <ADu,x>.
\end{equation}

\end{proof}

\begin{lemma}[First variation of the height]\label{L:fvH}
There exist a universal $O(1)$ such that for every $0<r<1$ one has
\begin{equation}\label{vn}
H'(r)= \frac{2 \alpha+ n }{r} H(r)   + O(1) H(r)+ \frac{1}{(\alpha +1)r} I(r).
\end{equation}
\end{lemma}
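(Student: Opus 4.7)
The plan is to differentiate $H(r)$ directly and then repeatedly integrate by parts. The starting point is the divergence identity
\[
2\alpha r(r^2-|x|^2)^{\alpha-1} = \frac{2\alpha+n}{r}(r^2-|x|^2)^\alpha - \frac{1}{r}\operatorname{div}\bigl[(r^2-|x|^2)^\alpha x\bigr],
\]
obtained by expanding $\operatorname{div}[(r^2-|x|^2)^\alpha x]$. Writing $H'(r)=2\alpha r\int_{\Om\cap B_r}u^2\mu(r^2-|x|^2)^{\alpha-1}$ and substituting this identity, the first term already gives $\frac{2\alpha+n}{r}H(r)$. In the remaining term I integrate by parts; the boundary contribution on $\Om\cap\partial B_r$ vanishes because of the weight $(r^2-|x|^2)^\alpha$ (with the usual approximation argument since $\alpha>-1$), while on $\partial\Om\cap B_r$ it vanishes because $u\equiv 0$ on $\Gamma\supset \partial\Om\cap B_r$. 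This yields
\[
H'(r)=\frac{2\alpha+n}{r}H(r)+\frac{1}{r}\int_{\Om\cap B_r}\langle D(u^2\mu),x\rangle\,(r^2-|x|^2)^\alpha\,dx.
\]

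Next I expand $\langle D(u^2\mu),x\rangle = 2u\mu\langle Du,x\rangle + u^2\langle D\mu,x\rangle$. The term with $\langle D\mu,x\rangle$ is immediately $O(1)H(r)$: by Lemma~\ref{L:mu}(3) one has $|D\mu|\le C$, so $|\langle D\mu,x\rangle|\le C|x|\le Cr$ on $B_r$, and the extra $\frac{1}{r}$ in front absorbs this cleanly against $H(r)$ (using the lower ellipticity bound $\mu\ge\lambda$). For the leading term I use the decomposition
\[
\mu(x)\langle Du,x\rangle=\langle ADu,x\rangle-\langle Du,T(x)\rangle,\qquad T(x):=(A(x)-\mu(x)I)\,x,
\]
so that
\[
\frac{2}{r}\int u\mu\langle Du,x\rangle(r^2-|x|^2)^\alpha
=\frac{2}{r}\int u\langle ADu,x\rangle(r^2-|x|^2)^\alpha-\frac{1}{r}\int\langle D(u^2),T\rangle(r^2-|x|^2)^\alpha.
\]
By Lemma~\ref{L:Ialt} the first piece equals exactly $\frac{I(r)}{(\alpha+1)r}$, so it remains to show the $T$-term is $O(1)H(r)$.

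To handle the $T$-term I integrate by parts once more (boundary terms vanish as before). This produces
\[
\frac{1}{r}\int u^2 \operatorname{div}\bigl[T(r^2-|x|^2)^\alpha\bigr]\,dx
=\frac{1}{r}\int u^2\bigl[(r^2-|x|^2)^\alpha\operatorname{div} T - 2\alpha\langle T,x\rangle(r^2-|x|^2)^{\alpha-1}\bigr].
\]
Here the two crucial structural facts about $T$ enter. First, $\langle T(x),x\rangle=\langle A(x)x,x\rangle-\mu(x)|x|^2=0$ by the very definition \eqref{mu} of $\mu$; this kills the potentially dangerous singular factor $(r^2-|x|^2)^{\alpha-1}$, which otherwise could not be absorbed into $H(r)$ since it would couple to $\int|Du|^2(r^2-|x|^2)^\alpha$, a quantity not controlled by $I(r)$. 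Second, the normalization $A(0)=I_n$ gives $\mu(0)=1$, $A(x)=I_n+O(|x|)$ and $\mu(x)=1+O(|x|)$, so by a Rademacher-Stepanov argument exactly as in Lemma~\ref{L:div(Agradr)} one finds $\operatorname{div} T=O(|x|)$. Consequently the remaining integrand is bounded by $Cu^2|x|(r^2-|x|^2)^\alpha/r\le Cu^2(r^2-|x|^2)^\alpha$, which is $O(1)H(r)$.

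Collecting the three contributions yields \eqref{vn}. The main potential obstacle is the cancellation $\langle T,x\rangle=0$: without exploiting this algebraic identity, a direct Cauchy-Schwarz on $\mu\langle Du,x\rangle-\langle ADu,x\rangle=O(|x|^2)|Du|$ produces a tail $r\int|u||Du|(r^2-|x|^2)^\alpha$ which cannot be absorbed into $O(1)H(r)$ alone. It is precisely the orthogonality of $T$ to the radial direction, a quadratic consequence of the definition of $\mu$, that makes the error term integrable against the non-singular weight $(r^2-|x|^2)^\alpha$ rather than $(r^2-|x|^2)^{\alpha-1}$, and hence controllable by $H(r)$.
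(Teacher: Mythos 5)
Your proof is correct, and it reaches \eqref{vn} by a slightly different bookkeeping than the paper. The paper, after the same initial splitting of $(r^2-|x|^2)^{\alpha-1}$, performs a single integration by parts with the modified radial field $Z(x)=A(x)x/\mu(x)$, and the whole burden is carried by Lemma \ref{L:ZZ} ($\operatorname{div}Z=n+O(r)$), the identity $Zu\,\mu=\ <ADu,x>$, and $Z\mu=O(r)$; Lemma \ref{L:Ialt} then converts the remaining term into $\tfrac{1}{(\alpha+1)r}I(r)$. You instead integrate by parts against the Euclidean field $x$, extract $\tfrac{I(r)}{(\alpha+1)r}$ via the same Lemma \ref{L:Ialt}, and dispose of the mismatch $\mu\langle Du,x\rangle-\langle ADu,x\rangle$ by a second integration by parts with $T(x)=(A(x)-\mu(x)I_n)x$. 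The two cancellations you isolate are exactly the paper's structural facts in another guise: $\langle T,x\rangle=0$ is the identity $\langle Z,x\rangle=|x|^2$ (i.e.\ the definition of $\mu$), and $\operatorname{div}T=O(|x|)$ is the content of Lemma \ref{L:ZZ} (both proved from $A(0)=I_n$, the Lipschitz bound \eqref{hypA1}, and Lemma \ref{L:mu}, via Rademacher--Stepanov). What your variant buys is that you never need the componentwise computation $D_iZ_j=\delta_{ij}+O(r_{z_0})$ nor the field $Z$ at all, at the price of one extra integration by parts; the paper's choice of $Z$ pays off later, since the same field and Lemma \ref{L:ZZ} are reused in the first variation of the energy (Proposition \ref{P:fvegen}) through the Rellich--Payne--Weinberger identity, where your purely radial decomposition would not suffice. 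Two small points you implicitly use, consistently with the paper: the symmetry of $A$ when writing $\langle Du,A x\rangle=\langle ADu,x\rangle$, and the vanishing of the boundary term on $\partial B_r$, which for the eventual choice $\alpha=\sqrt M\ge 1$ needs no approximation argument.
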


\begin{proof}
Differentiating \eqref{vh} we have 
\begin{equation}\label{v2}
H'(r)= 2 \alpha r \int_{\Om \cap B_r} u^2  (r^2 - |x|^2)^{\alpha-1} \mu = \frac{2 \alpha}{r} H(r) + \frac{2 \alpha}{r} \int _{\Om \cap B_r} u^2 (r^2 - |x|^2)^{\alpha-1} |x|^2 \mu ,
\end{equation}
where in the second equality we have used the simple fact
\[
(r^2 - |x|^2)^{\alpha-1} = \frac{1}{r^2} (r^2 - |x|^2)^{\alpha} + \frac{|x|^2}{r^2} (r^2 - |x|^2)^{\alpha-1}.
\]
Using the definitions \eqref{mu} and \eqref{Z} of $\mu$ and $Z$, we see that the second term in the right-hand side of \eqref{v2} equals
\begin{equation}\label{ibp1}
\frac{2 \alpha}{r} \int _{\Om \cap B_r} u^2 (r^2 - |x|^2)^{\alpha-1} |x|^2  \mu  = - \frac{1}{r} \int_{\Om \cap B_r} u^2 <Z, D (r^2 - |x|^2)^\alpha> \mu. 
\end{equation}
Now we notice that since we are assuming that $u$ vanishes continuously on the open subset $\Gamma$ of $\partial \Om$, there exists $R_1 = R_1(z_0)>0$ such that $u$ vanishes on $\partial \Om \cap B_r$ for every $0<r<R_1$. Applying the divergence theorem to the right-hand side of \eqref{ibp1}, we thus find
\begin{align*}
& \frac{2 \alpha}{r} \int _{\Om \cap B_r} u^2  (r^2 - |x|^2)^{\alpha-1} |x|^2 \mu
 =  \frac{1}{r} \int_{\Om \cap B_r} \operatorname{div}(u^2 \mu Z) (r^2 - |x|^2)^\alpha
\\
& = \frac{1}{r} \int_{\Om \cap B_r} (\operatorname{div} Z) u^2 (r^2 - |x|^2)^\alpha \mu  + \frac{2}{r} \int_{\Om \cap B_r} u Zu  (r^2 - |x|^2)^\alpha \mu
\\
& + \frac{1}{r} \int_{\Om \cap B_r} u^2 Z\mu (r^2 - |x|^2)^\alpha.
\end{align*}

We can thus apply Lemma \ref{L:ZZ} that allows to conclude for a universal $O(r)$
\begin{align*}
& \frac{2 \alpha}{r} \int _{\Om \cap B_r} u^2  (r^2 - |x|^2)^{\alpha-1} |x|^2 \mu
 = \frac{n + O(r)}{r} \int_{\Om \cap B_r}  u^2  (r^2 - |x|^2)^\alpha \mu
 \\
 & + \frac{2}{r} \int_{\Om \cap B_r} u Zu  (r^2 - |x|^2)^\alpha \mu
+ \frac{1}{r} \int_{\Om \cap B_r} u^2 Z\mu (r^2 - |x|^2)^\alpha.
\end{align*}
Furthermore, from \eqref{v0} and (3) of Lemma \ref{L:div(Agradr)} we find
\[
Z\mu = \frac{<Ax,D\mu>}{\mu} = O(r),
\]
with $O(r)$ universal. Now, \eqref{Zu} above  gives
\begin{align*}
& \frac{2 \alpha}{r} \int _{\Om \cap B_r} u^2  (r^2 - |x|^2)^{\alpha-1} |x|^2 \mu
 = \frac{n}{r} H(r) + O(1) H(r) + \frac{2}{r} \int_{\Om \cap B_r} u <ADu,x> (r^2 - |x|^2)^\alpha.
\end{align*}
Substituting this identity in \eqref{v2}, we conclude that 
\begin{equation}\label{H2}
H'(r)= \frac{2 \alpha+ n }{r} H(r)   + O(1) H(r) + \frac{2}{r} \int_{\Om \cap B_r} u  < ADu, x> (r^2-|x|^2)^\alpha.
\end{equation}
From \eqref{H2} and \eqref{vi} we obtain the desired conclusion \eqref{vn} above.

\end{proof}

In the next result we will employ a geometric notion which has already been introduced in Lemma \ref{L:gens} above.

\begin{prop}[First variation of the energy]\label{P:fvegen}
 Let $u$ be a solution to \eqref{m1} and assume that for some $R_1> 0$ the set   $\Om \cap B_{R_1}$ be generalized  star-shaped with respect to $A$ and $z_0 = 0$ in the sense that
\begin{equation}\label{sr}
<Z(x),\nu(x)>  \geq 0,
\end{equation}
where $\nu(x)$ is the outward unit normal in $x \in \partial \Om \cap B_{R_1}$.
If $u$ vanishes on $\partial \Om \cap B_{R_1}$, then there exist $O(1)$ and $C>0$ universal, but independent from $M \ge ||V||_{W^{1,\infty}(\Om)}$, such that for every $0<r<R_1$ one has
\begin{align}\label{gfve} 
I'(r) & \ge \left(\frac{2\alpha+n}{ r} + O(1)\right) I(r) - C M r  H(r) 
 +  \frac{4(\alpha+1)}{r} \int_{\Om \cap B_r} (Zu)^2  (r^2 - |x|^2)^{\alpha}\mu.
\end{align}
\end{prop}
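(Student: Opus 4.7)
The plan is to combine a direct differentiation of $I(r)$ with a Rellich-Pohozaev-type identity obtained by multiplying \eqref{m1} by $Zu\,(r^2-|x|^2)^{\alpha+1}$ and integrating over $\Om \cap B_r$. The star-shapedness hypothesis \eqref{sr} will let me discard certain boundary integrals on $\p\Om \cap B_r$ with the correct sign, Lemma \ref{L:ZZ} together with the Lipschitz character of $A$ will absorb every first-order error into a universal $O(1)I(r)$ perturbation, and the $V$-dependence will be confined to the $-CMrH(r)$ correction through an additional integration by parts.

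First, differentiating \eqref{vi} in $r$ yields
\[
I'(r) = 2(\alpha+1)\,r \int_{\Om \cap B_r} [<ADu,Du> + Vu^2](r^2-|x|^2)^\alpha\,dx.
\]
Writing $r^2 = (r^2-|x|^2) + |x|^2$ converts this into
\[
I'(r) = \tfrac{2(\alpha+1)}{r} I(r) + \tfrac{2(\alpha+1)}{r} \int_{\Om \cap B_r} [<ADu,Du> + Vu^2]\,|x|^2 (r^2-|x|^2)^\alpha,
\]
so the task reduces to bounding the second integral from below. Setting $w = (r^2-|x|^2)^{\alpha+1}$, I multiply \eqref{m1} by $Zu \cdot w$ and integrate by parts over $\Om \cap B_r$. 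Via \eqref{Zu}, the piece $Zu <ADu, Dw>$ yields $-2(\alpha+1)\int \mu(Zu)^2 (r^2-|x|^2)^\alpha$, while the remaining interior term, after expanding $\p_j(Zu) = Z_{k,j}u_k + Z_k u_{jk}$ and invoking Lemma \ref{L:ZZ} together with $A \in C^{0,1}$, decomposes as
\[
<ADu, D(Zu)> = <ADu,Du> + \tfrac12\,Z \cdot D<ADu,Du> + O(r)|Du|^2.
\]
A further integration by parts of the transport term against $w$, based on $\operatorname{div}(Zw) = (n + O(r))w - 2(\alpha+1)|x|^2 (r^2-|x|^2)^\alpha$, produces an expression for $(\alpha+1)\int <ADu,Du>|x|^2(r^2-|x|^2)^\alpha$ featuring $\tfrac{n-2+O(r)}{2}\int <ADu,Du>\,w$, the quadratic $2(\alpha+1)\int \mu(Zu)^2 (r^2-|x|^2)^\alpha$, the cross term $-\int VuZu\,w$, and an $O(r)|Du|^2 w$ remainder. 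Combining with $\tfrac{2(\alpha+1)}{r} I(r)$ and using $\int <ADu,Du>\,w = I(r) - \int Vu^2 w$, the leading coefficient of $I(r)/r$ rearranges to $(2\alpha + n + O(r))/r$, exactly as required.

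The boundary integrals on $\p B_r$ vanish thanks to the weight $w$, while those on $\p\Om \cap B_r$, using $u = 0$ and hence $Du = u_\nu \nu$, combine into a nonnegative quantity proportional to $\int_{\p\Om \cap B_r} u_\nu^2 <A\nu,\nu><Z,\nu>\,w$, which is discarded via \eqref{hypA2} and the star-shapedness \eqref{sr}. For the potential contributions, $\int Vu^2 |x|^2 (r^2-|x|^2)^\alpha$ is pointwise bounded by $Mr^2 u^2 (r^2-|x|^2)^\alpha$, yielding an $O(Mr)H(r)$ correction after the $1/r$ prefactor and $\mu \ge \lambda$. The mixed term $\int VuZu\,w$ is treated via $2uZu = Z(u^2)$ and a final integration by parts; the boundary contribution vanishes since $u^2 \equiv 0$ on $\p\Om$, and the resulting bulk integral is pointwise controlled by $(|DV||Z| + |V||\operatorname{div} Z| + |V||Z||Dw|/w)\,u^2 w$, hence also by $O(Mr)H(r)$ via $||V||_{W^{1,\infty}} \le M$, $|Z| \le Cr$, and Lemma \ref{L:ZZ}. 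Finally, the $O(r)|Du|^2 w$ remainder absorbs into $O(1)I(r) + O(Mr)H(r)$ through ellipticity and the crude estimate $|\int Vu^2 w| \le CMr^2 H(r)$.

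The main obstacle I anticipate is the bookkeeping that keeps the $M$-dependence out of the leading coefficient of $I(r)/r$ and confines it entirely to the perturbative $-CMrH(r)$ term. This is achieved by processing every occurrence of $V$ through an integration by parts that pairs $V$ with either a factor $|Z| \le Cr$ or $|Dw| \le Cr(r^2-|x|^2)^\alpha$, so that the extra power of $r$ compensates the $1/r$ prefactors appearing in the formula for $I'(r)$, yielding an $O(Mr)H(r)$ bound rather than a multiplicative correction to the frequency's leading coefficient.
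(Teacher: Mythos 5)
Your overall route is the same as the paper's: differentiate \eqref{vi}, split the weight using $\langle Z,x\rangle=|x|^2$, use the Rellich--Payne--Weinberger multiplier $Zu\,(r^2-|x|^2)^{\alpha+1}$, discard the boundary term $\tfrac12\int_{\p\Om\cap B_r}u_\nu^2\langle A\nu,\nu\rangle\langle Z,\nu\rangle\,w\ge 0$ via \eqref{sr}, and absorb the Lipschitz errors from Lemma \ref{L:ZZ} into $O(1)I(r)$. All of that is fine. The gap is in your treatment of the potential terms that carry the factor $\alpha+1$. After your splitting, $I'(r)$ contains $\frac{2(\alpha+1)}{r}\int_{\Om\cap B_r}Vu^2|x|^2(r^2-|x|^2)^{\alpha}$, and the integration by parts of the mixed term $-\frac1r\int V\,Z(u^2)\,w$ produces, through $\langle Z,Dw\rangle=-2(\alpha+1)|x|^2(r^2-|x|^2)^{\alpha}$, the term $-\frac{2(\alpha+1)}{r}\int Vu^2|x|^2(r^2-|x|^2)^{\alpha}$. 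You estimate each of these in absolute value and claim $O(Mr)H(r)$, but the honest bound for each is $C(\alpha+1)Mr\,H(r)$: your stated prefactor ``$1/r$'' is actually $2(\alpha+1)/r$, and your claimed bound $|Dw|\le Cr(r^2-|x|^2)^{\alpha}$ is false with a universal $C$, since $|Dw|=2(\alpha+1)|x|(r^2-|x|^2)^{\alpha}$. Because $\alpha$ is later chosen equal to $\sqrt M$ (this is how \eqref{warning4} and the final estimate are obtained), a constant depending on $\alpha$ is \emph{not} universal, so your argument only yields $-C(1+\sqrt M)Mr\,H(r)$ in place of $-CMr\,H(r)$; fed into Theorem \ref{vimp} and Lemma \ref{L:3sH} this degrades the vanishing-order bound from $1+\sqrt M$ to roughly $1+M$ and does not prove \eqref{gfve} as stated.

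The repair is exactly the step the paper performs: do not estimate these two $(\alpha+1)$-weighted terms separately, but observe that they cancel identically (both equal $\pm\frac{2(\alpha+1)}{r}\int Vu^2|x|^2(r^2-|x|^2)^{\alpha}$), together with the cancellation of the $\frac{2\alpha}{r}\int Vu^2 w$ contributions arising when you replace $\int\langle ADu,Du\rangle w$ by $I(r)-\int Vu^2 w$. After these cancellations every surviving occurrence of $V$ has a coefficient of size $O(1/r)$ or $O(1)$ and pairs with either $w\le r^2(r^2-|x|^2)^{\alpha}$, $|Z|\le Cr$, or $|ZV|\le C r M$, giving $C M r\,H(r)$ with $C$ universal and independent of $M$ and $\alpha$, which is what \eqref{gfve} requires.
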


\begin{proof}
From the identity \eqref{I2} above we obtain
\[
I'(r)  = 2(\alpha+1) r \int_{\Om \cap B_r} <A Du, Du> (r^2 - |x|^2)^{\alpha} + 2(\alpha +1) r \int_{\Om \cap B_r} Vu^2 (r^2 - |x|^2)^{\alpha}.
\]
Using the trivial observation that 
\[
<Z,x> = \frac{<Ax,x>}{\mu} = |x|^2,
\]
 in the first term in the right-hand side of the above equation for $I'(r)$ we now use the fact that
\begin{align*}
(r^2 - |x|^2)^{\alpha} & = \frac{1}{r^2} (r^2 - |x|^2)^{\alpha+1} + \frac{1}{r^2} (r^2 - |x|^2)^{\alpha} <Z,x> 
\\
& = \frac{1}{r^2} (r^2 - |x|^2)^{\alpha+1} - \frac{1}{2(\alpha+1)r^2} <Z,D(r^2 - |x|^2)^{\alpha+1}>.
\end{align*}
We thus find
\begin{align*}
I'(r) & = \frac{2(\alpha+1)}{ r} \int_{\Om \cap B_r} <A Du, Du> (r^2 - |x|^2)^{\alpha+1} + 2(\alpha +1) r \int_{\Om \cap B_r} Vu^2 (r^2 - |x|^2)^{\alpha}.
\\
& - \frac{1}{r} \int_{\Om \cap B_r} <ADu, Du> <Z,D(r^2 - |x|^2)^{\alpha+1}>.
\end{align*}
We now integrate by parts in the third term in the right-hand side of the latter equation obtaining
\begin{align*}
& - \frac{1}{r} \int_{\Om \cap B_r} <ADu, Du> <Z,D(r^2 - |x|^2)^{\alpha+1}> 
\\
& = \frac{1}{r} \int_{\Om \cap B_r} \operatorname{div}(<ADu, Du> Z) (r^2 - |x|^2)^{\alpha+1}
\\
&  - \frac{1}{r} \int_{\partial \Om \cap B_r} <ADu, Du> <Z, \nu>  (r^2 - |x|^2)^{\alpha+1}.
\end{align*}

To compute the first integral in the right-hand side of the latter equation we use the following generalization of the classical identity of Rellich due to Payne and Weinberger, see \cite{PW}, and also section 5.1 in \cite{N},
\begin{align*}
\operatorname{div}(<ADu, Du>Z)& =  2 \operatorname{div}(<Z,Du> A Du)  + \operatorname{div} Z <ADu, Du> 
\\
& - 2 D_i Z_k a_{ij} D_j u D_k u  - 2 <Z,Du> \operatorname{div} (A Du) + Z_k D_k a_{ij} D_i u D_j u.
\end{align*}
Using \eqref{m1}, \eqref{v0}, \eqref{Z}, and \eqref{DZ} and \eqref{divZ} in Lemma \ref{L:ZZ} in the latter equation,
we obtain
\begin{align*}
\operatorname{div}(<ADu, Du>Z)& =  2 \operatorname{div}(<Z,Du> A Du)  + ((n-2) + O(r)) <ADu, Du>  - 2 <Z,Du> V u.
\end{align*}
From this equation and the divergence theorem we find 
\begin{align*}
& \frac{1}{r} \int_{\Om \cap B_r} \operatorname{div}(<ADu, Du> Z) (r^2 - |x|^2)^{\alpha+1}
 =  \frac{2}{r} \int_{\partial \Om \cap B_r} Zu <ADu,\nu> (r^2 - |x|^2)^{\alpha+1}
\\
& + \left(\frac{n-2}{r} + O(1)\right)\int_{\Om \cap B_r} <ADu, Du> (r^2 - |x|^2)^{\alpha+1}  -  \frac{1}{r} \int_{\Om \cap B_r} Z(u^2) V (r^2 - |x|^2)^{\alpha+1}
\\
& +  \frac{4(\alpha+1)}{r} \int_{\Om \cap B_r} (Zu)^2 \mu (r^2 - |x|^2)^{\alpha}.
\end{align*}
Using these equations we thus find
\begin{align*}
I'(r) & = \left(\frac{2\alpha+n}{ r} + O(1)\right)\int_{\Om \cap B_r} <A Du, Du> (r^2 - |x|^2)^{\alpha+1} + 2(\alpha +1) r \int_{\Om \cap B_r} Vu^2 (r^2 - |x|^2)^{\alpha}.
\\
& -  \frac{1}{r} \int_{\Om \cap B_r} Z(u^2) V (r^2 - |x|^2)^{\alpha+1} +  \frac{4(\alpha+1)}{r} \int_{\Om \cap B_r} (Zu)^2 \mu (r^2 - |x|^2)^{\alpha} 
\\
& + \frac{2}{r} \int_{\partial \Om \cap B_r} Zu <ADu,\nu> (r^2 - |x|^2)^{\alpha+1} - \frac{1}{r} \int_{\partial \Om \cap B_r} <ADu, Du> <Z, \nu>  (r^2 - |x|^2)^{\alpha+1}.
\end{align*}
Observe now that, since $u = 0$ on $\Gamma$, we have $Du(x) = \beta(x) \nu(x)$ for a certain function $\beta$. Therefore, we have on $\Gamma$
\[
Zu <ADu,\nu> = <ADu, Du> <Z, \nu>.
\] 
If we use the generalized starlikeness assumption \eqref{sr} above, we thus infer
\begin{align}\label{bdc}
& \frac{2}{r} \int_{\partial \Om \cap B_r} Zu <ADu,\nu> (r^2 - |x|^2)^{\alpha+1} - \frac{1}{r} \int_{\partial \Om \cap B_r} <ADu, Du> <Z, \nu>  (r^2 - |x|^2)^{\alpha+1} 
\\
& = \frac{1}{r} \int_{\partial \Om \cap B_r} <ADu, Du> <Z, \nu>  (r^2 - |x|^2)^{\alpha+1} \ge 0.
\notag
\end{align}
This gives
\begin{align*}
I'(r) & \ge  \left(\frac{2\alpha+n}{ r} + O(1)\right)\int_{\Om \cap B_r} <A Du, Du> (r^2 - |x|^2)^{\alpha+1} + 2(\alpha +1) r \int_{\Om \cap B_r} Vu^2 (r^2 - |x|^2)^{\alpha}
\\
& +  \frac{4(\alpha+1)}{r} \int_{\Om \cap B_r} (Zu)^2 (r^2 - |x|^2)^{\alpha} \mu -  \frac{1}{r} \int_{\Om \cap B_r} Z(u^2) V (r^2 - |x|^2)^{\alpha+1}
\\
& = \left(\frac{2\alpha+n}{ r} + O(1)\right) I(r) - \left(\frac{2\alpha+n}{ r} + O(1)\right)\int_{\Om \cap B_r} V u^2 (r^2 - |x|^2)^{\alpha+1} 
\\
& + 2(\alpha +1) r \int_{\Om \cap B_r} Vu^2 (r^2 - |x|^2)^{\alpha} +  \frac{4(\alpha+1)}{r} \int_{\Om \cap B_r} (Zu)^2  (r^2 - |x|^2)^{\alpha} \mu
\\
& -  \frac{1}{r} \int_{\Om \cap B_r} Z(u^2) V (r^2 - |x|^2)^{\alpha+1}.
\end{align*}
An integration by parts now gives
\begin{align*}
& -  \frac{1}{r} \int_{\Om \cap B_r} Z(u^2) V (r^2 - |x|^2)^{\alpha+1} = \frac{1}{r} \int_{\Om \cap B_r} u^2 \operatorname{div}(V (r^2 - |x|^2)^{\alpha+1} Z)
\\
& = \frac{1}{r} \int_{\Om \cap B_r} V u^2  (r^2 - |x|^2)^{\alpha+1} \operatorname{div} Z + \frac{1}{r} \int_{\Om \cap B_r} u^2 ZV (r^2 - |x|^2)^{\alpha+1} 
\\
&  -  \frac{2(\alpha+1)}{r} \int_{\Om \cap B_r} V u^2  (r^2 - |x|^2)^{\alpha} <Z,x> 
\\
& = \frac{n}{r}\int_{\Om \cap B_r} V u^2  (r^2 - |x|^2)^{\alpha+1} + \frac{1}{r}\int_{\Om \cap B_r} O(|x|) V u^2 (r^2 - |x|^2)^{\alpha+1}
\\
& + \frac{1}{r} \int_{\Om \cap B_r} u^2 ZV (r^2 - |x|^2)^{\alpha+1}  -  \frac{2(\alpha+1)}{r} \int_{\Om \cap B_r} V u^2  (r^2 - |x|^2)^{\alpha} |x|^2.
\end{align*}
Substitution in the above inequality gives
\begin{align*}
I'(r) & \ge  \left(\frac{2\alpha+n}{ r} + O(1)\right) I(r) - \left(\frac{2\alpha+n}{ r} + O(1)\right)\int_{\Om \cap B_r} V u^2 (r^2 - |x|^2)^{\alpha+1} 
\\
& + 2(\alpha +1) r \int_{\Om \cap B_r} Vu^2 (r^2 - |x|^2)^{\alpha} +  \frac{4(\alpha+1)}{r} \int_{\Om \cap B_r} (Zu)^2 \mu (r^2 - |x|^2)^{\alpha} 
\\
&  + \frac{n}{r}\int_{\Om \cap B_r} V u^2  (r^2 - |x|^2)^{\alpha+1} + \frac{1}{r}\int_{\Om \cap B_r} O(|x|) V u^2 (r^2 - |x|^2)^{\alpha+1}
\\
& + \frac{1}{r} \int_{\Om \cap B_r} u^2 ZV (r^2 - |x|^2)^{\alpha+1}  -  \frac{2(\alpha+1)}{r} \int_{\Om \cap B_r} V u^2  (r^2 - |x|^2)^{\alpha} |x|^2.
\end{align*}
If we now observe that
\begin{align*}
& 2(\alpha +1) r \int_{\Om \cap B_r} Vu^2 (r^2 - |x|^2)^{\alpha} = \frac{2(\alpha +1)}{ r} \int_{\Om \cap B_r} Vu^2 (r^2 - |x|^2)^{\alpha+1}
\\
& + \frac{2(\alpha +1)}{ r} \int_{\Om \cap B_r} Vu^2 (r^2 - |x|^2)^{\alpha} |x|^2,
\end{align*}
 then the previous inequality gives
\begin{align*}
I'(r) & \ge  \left(\frac{2\alpha+n}{ r} + O(1)\right) I(r) + \left(\frac{2}{ r} + O(1)\right)\int_{\Om \cap B_r} V u^2 (r^2 - |x|^2)^{\alpha+1} 
\\
&  +  \frac{4(\alpha+1)}{r} \int_{\Om \cap B_r} (Zu)^2 \mu (r^2 - |x|^2)^{\alpha} 
+ \frac{1}{r}\int_{\Om \cap B_r} O(|x|) V u^2 (r^2 - |x|^2)^{\alpha+1}
\\
& + \frac{1}{r} \int_{\Om \cap B_r} u^2 ZV (r^2 - |x|^2)^{\alpha+1}.
\end{align*} 
It is now clear that  
\[
\left|\int_{\Om \cap B_r} V u^2 (r^2 - |x|^2)^{\alpha+1} \right| \le r^2 ||V||_{L^\infty(\Om)} \int_{\Om \cap B_r} u^2 (r^2 - |x|^2)^{\alpha} \le C r^2 ||V||_{W^{1,\infty}(\Om)} H(r)
\]
where in the last equality $C>0$ is universal and we have used \eqref{v0} and \eqref{vh} above. Similarly, we have
\[
\left|\int_{\Om \cap B_r} O(|x|) V u^2 (r^2 - |x|^2)^{\alpha+1}\right| \le C r^3 ||V||_{W^{1,\infty}(\Om)} H(r),
\]
with $C>0$ universal. Finally, we have 
\[
\left|\int_{\Om \cap B_r} u^2 ZV (r^2 - |x|^2)^{\alpha+1}\right| \le C r^3 ||V||_{W^{1,\infty}(\Om)} H(r),
\]
with $C>0$ universal. These estimates allow to conclude that the desired inequality \eqref{gfve} does hold.

\end{proof}

The following important consequence of Lemma \ref{L:fvH} and Proposition \ref{P:fvegen} is the central result of this section. 

\begin{thrm}[Monotonicity of the generalized frequency]\label{vimp}
 Let $u$ be a solution to \eqref{m1} and assume that for $z_0 = 0 \in \overline{\Om}$ the assumption \eqref{dev1} hold.  Suppose that for  some $ R_1> 0$ the set  $\Om \cap B_{R_1}$ satisfy the generalized  star-shaped assumption \eqref{sr} above with respect to $0$.
If $u$ vanishes on $\partial \Om \cap B_{R_1}$, then there exist $\overline{R}, C_1, C_2>0$, depending on $n, \lambda, K$, but not on $M$, such that  the function
\[
r \to e^{C_1r} (N(r) + C_2 M r^2)
\]
is nondecreasing for $ 0< r< \min\{\overline{R}, R_1\}$.
\end{thrm}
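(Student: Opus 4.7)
The plan is to differentiate $N(r) = I(r)/H(r)$, plug in Lemma \ref{L:fvH} and Proposition \ref{P:fvegen}, and observe that the leading singular terms $\frac{2\alpha+n}{r}I(r)H(r)$ cancel exactly. The alternative expression for $I(r)$ in Lemma \ref{L:Ialt} will then supply a Cauchy--Schwarz bound that precisely absorbs the Almgren-type quadratic nonlinearity $-\frac{I(r)^2}{(\alpha+1)r}$ coming from the last term in Lemma \ref{L:fvH}. The result is a simple differential inequality for $N(r)$ whose only ``bad'' contribution is a term linear in $Mr$, which is then absorbed by the perturbation $C_2 M r^2$ and the exponential prefactor.

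Concretely, I would write
\begin{align*}
H(r)^2 N'(r) = I'(r)H(r) - I(r) H'(r),
\end{align*}
substitute the two first-variation formulas, and cancel the two copies of $\frac{2\alpha+n}{r}I(r)H(r)$ to obtain
\begin{align*}
H(r)^2 N'(r) \ \ge\ O(1) I(r) H(r) - C M r H(r)^2 + \frac{4(\alpha+1)}{r} H(r) J(r) - \frac{I(r)^2}{(\alpha+1) r},
\end{align*}
where $J(r) := \int_{\Om \cap B_r} (Zu)^2(r^2-|x|^2)^{\alpha}\mu$ and $O(1), C$ are universal. The alternative expression of Lemma \ref{L:Ialt} combined with the Cauchy--Schwarz inequality yields $I(r)^2 \le 4(\alpha+1)^2 H(r) J(r)$, so the last two terms contribute a non-negative quantity and can be dropped. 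Dividing by $H(r)^2$ produces the key inequality
\begin{align*}
N'(r) \ \ge\ O(1)\, N(r) - C M r, \qquad |O(1)| \le C_0,
\end{align*}
with $C_0, C$ universal and independent of $M$.

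To conclude, I would first note that the ellipticity \eqref{hypA2} gives the cheap a priori bound $I(r) \ge -\lambda^{-1} M r^2 H(r)$, whence $N(r) \ge -\lambda^{-1} M r^2$. Fix $C_1 \ge C_0$ and $C_2$ large enough (say $C_2 \ge \lambda^{-1}$ and also larger than a universal multiple of $C$). Setting $P(r) := N(r) + C_2 M r^2 \ge 0$, one rewrites $O(1)N(r) = O(1) P(r) - O(1) C_2 M r^2$; since $P\ge 0$ and $r \le R_0$, the error $O(1) C_2 M r^2$ is controlled by $C'' M r$. This gives
\begin{align*}
P'(r) + C_1 P(r) \ \ge\ (2C_2 - C''')\, M r,
\end{align*}
and choosing $C_2$ large enough that the right-hand side is non-negative yields
\begin{align*}
\frac{d}{dr}\bigl(e^{C_1 r} (N(r) + C_2 M r^2)\bigr) \ \ge\ 0,
\end{align*}
as claimed, for $0 < r < \min\{\overline R, R_1\}$, where $\overline R$ is dictated by the ``$O(1)$'' errors in Lemma \ref{L:fvH} and Proposition \ref{P:fvegen}.

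The main obstacle is the bookkeeping around the sign of $N(r)$: the Almgren-type identity $N' = O(1) N + \text{(positive term)} - (Mr)$ is only immediately useful as a monotonicity statement when $N\ge 0$, which fails here because of the potential $V$. The trick, following \cite{Ku2} and \cite{Zhu1}, is that $N(r)$ is bounded below by $-CMr^2$ \emph{exactly} on the scale of the correction $+C_2 M r^2$, so after this shift the perturbed quantity is non-negative and the $Mr$ error on the right-hand side can be matched by the derivative $2 C_2 M r$ of the correction itself. The exponential prefactor $e^{C_1 r}$ is then only needed to absorb the lower-order universal terms $O(1)$ coming from the variable coefficients of $A(x)$ and the geometry of $\p\Om$.
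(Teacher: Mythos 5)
Your proposal is correct and follows essentially the same route as the paper: differentiate $N=I/H$, substitute Lemma \ref{L:fvH} and Proposition \ref{P:fvegen} so the $\frac{2\alpha+n}{r}$ terms cancel, absorb the quadratic term $\frac{N(r)^2}{(\alpha+1)r}$ via Lemma \ref{L:Ialt} and Cauchy--Schwarz, and then pass to the perturbed quantity $e^{C_1 r}(N(r)+C_2Mr^2)$. Your extra step using the a priori bound $N(r)\ge -\lambda^{-1}Mr^2$ to handle the possible negativity of $N$ (together with shrinking $\overline R$ so the term $O(1)C_2Mr^2$ is dominated by $2C_2Mr$) is a legitimate, slightly more careful bookkeeping of the same estimate $N'(r)\ge O(1)N(r)-CMr$ that the paper states as \eqref{fn'}.
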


\begin{proof}
From \eqref{genf}, Lemma \ref{L:fvH} and Proposition \ref{P:fvegen}  we have for $0<r<R_1$
\begin{align}\label{fn'}
N'(r) & = \frac{I'(r)}{H(r)} - \frac{H'(r)}{H(r)} N(r) \ge \left(\frac{2\alpha + n}{r} + O(1)\right) N(r) - C Mr 
\\
& +  \frac{4(\alpha+1)}{r H(r)} \int_{\Om \cap B_r} (Zu)^2 \mu (r^2 - |x|^2)^{\alpha}
 -  \left(\frac{2\alpha + n}{r} + O(1) + \frac{1}{(\alpha+1)r} N(r)\right) N(r)
\notag \\
 & = O(1) N(r) - C M r  +  \frac{4(\alpha+1)}{r H(r)} \int_{\Om \cap B_r} (Zu)^2 \mu (r^2 - |x|^2)^{\alpha} -  \frac{1}{(\alpha+1)r} N(r)^2
\notag \\
 & \ge - C_1 N(r) - C M r,
\notag
\end{align}
where the last inequality follows from \eqref{vi} above and the Cauchy-Schwarz inequality, and $C_1, C$ are universal.
Letting $C_2 =  C/2$ we now conclude
\begin{align*}
\frac{d}{dr} e^{C_1 r} (N(r) + C_2 M r^2) & = e^{C_1 r} \left(N'(r) + C_1 N(r) + C M r + C_1 C_2 M r^2\right)
\\
&  \ge N'(r) + C_1 N(r) + C M r \ge 0,
\end{align*}
where the last inequality follows from \eqref{fn'}.

\end{proof}

\section{Some three-sphere lemmas}\label{S:3s}
 
The aim of this section is to derive some basic consequences of the monotonicity Theorem \ref{vimp} above. We begin with establishing a three-sphere theorem for the height function $H$, Lemma \ref{L:3sH} below, and then combine such result with local estimates at the boundary to obtain a corresponding three-sphere theorem for $L^\infty$ norms on balls, see Lemma \ref{L:3sh} below.

\begin{lemma}\label{L:3sH}
Under the hypothesis of  Theorem \ref{vimp}, let $0<r_1 < r_2 < 2r_2 < r_3< R_1$. Then, there exist universal constants $\overline C, C$ and $C'$ such that, letting
\[
\alpha_0 = 
 \log\left(\frac{r_3}{2r_2}\right),\ \ \ \beta_0 = \overline C^{2} \log \left(\frac{2r_2}{r_1}\right),
 \]
we obtain
\begin{equation}\label{warning4}
H(2r_2) \le e^C \left(\frac{r_3}{2r_2}\right)^{C' \sqrt M} H(r_3)^\frac{\beta_0}{\alpha_0 + \beta_0} H(r_1)^\frac{\alpha_0}{\alpha_0 + \beta_0}.
\end{equation} 
\end{lemma}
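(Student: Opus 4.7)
The approach is a Hadamard-type three-sphere interpolation that rests on the logarithmic derivative expression for $H$ coming from Lemma \ref{L:fvH} combined with the adjusted monotonicity of the frequency in Theorem \ref{vimp}. Dividing \eqref{vn} by $H$ produces the pointwise identity
\[
(\log H(r))' = \frac{2\alpha+n}{r} + \frac{N(r)}{(\alpha+1)r} + O(1),
\]
which, integrated over any subinterval $[a,b]\subset(0,R_1)$, gives
\[
\log\frac{H(b)}{H(a)} = (2\alpha+n)\log\frac{b}{a} + \frac{1}{\alpha+1}\int_a^b\frac{N(r)}{r}\,dr + O(b-a).
\]

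To bound the $N$-integral on the two intervals $[r_1,2r_2]$ and $[2r_2,r_3]$, I use the monotonicity of $\tilde N(r):=e^{C_1 r}(N(r)+C_2 M r^2)$. On $[r_1,2r_2]$, the inequality $\tilde N(r)\le\tilde N(2r_2)$ yields $N(r)\le e^{C_1 R_1}(N(2r_2)+C_2 M R_1^2)$, hence $\int_{r_1}^{2r_2} N(r)/r\,dr$ is at most $e^{C_1 R_1}(N(2r_2)+C_2 M R_1^2)L_1$, where $L_1:=\log(2r_2/r_1)$. On $[2r_2,r_3]$, the reverse inequality $\tilde N(2r_2)\le\tilde N(r)$ yields $N(r)\ge e^{-C_1 R_1}N(2r_2)-C_2 M R_1^2$, hence $\int_{2r_2}^{r_3} N(r)/r\,dr$ is at least $e^{-C_1 R_1}N(2r_2)L_2-C_2 M R_1^2 L_2$, where $L_2:=\log(r_3/(2r_2))$.

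The decisive algebraic step is to eliminate $N(2r_2)$ by a weighted subtraction: multiplying the first (upper) estimate by $L_2 e^{-C_1 R_1}$ and the second (lower) estimate by $L_1 e^{C_1 R_1}$ makes the coefficients of $N(2r_2)$ in the two expressions coincide, so subtraction removes that unknown. Setting $\overline C^{2} := e^{2C_1 R_1}$ and rearranging produces
\[
(L_2+\overline C^{2} L_1)\log H(2r_2) \le L_2\log H(r_1) + \overline C^{2}L_1\log H(r_3) + \mathcal E.
\]
Since $L_2+\overline C^{2}L_1 = \alpha_0+\beta_0$, dividing by this quantity and exponentiating yields exactly the product shape of \eqref{warning4}, with weights $\alpha_0/(\alpha_0+\beta_0)$ and $\beta_0/(\alpha_0+\beta_0)$ as required.

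The main obstacle is controlling the residual $\mathcal E$ so as to match the target prefactor $e^C(r_3/(2r_2))^{C'\sqrt M}=\exp(C+C'\sqrt M L_2)$. The pieces produced by the $(2\alpha+n)\log(b/a)$ and $O(b-a)$ terms contribute only $O(\alpha_0+\beta_0)$ before dividing, and so collapse into the harmless $e^C$ factor. The genuinely $M$-dependent contribution is $\lesssim CML_1 L_2/(\alpha+1)$ which, after dividing by $\alpha_0+\beta_0 \ge \overline C^{2}L_1$, is controlled by $CML_2/(\alpha+1)$. Extracting the $\sqrt M$ scaling (rather than $M$) then requires exploiting the freedom to choose the Almgren parameter $\alpha>-1$ in the definitions \eqref{vh}--\eqref{vi}: taking $\alpha$ of order $\sqrt M$ converts $M/(\alpha+1)$ into $\sqrt M$, producing precisely the factor $(r_3/(2r_2))^{C'\sqrt M}$ upon exponentiation, exactly mirroring the interior scheme of Zhu.
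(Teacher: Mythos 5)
Your proposal is correct and follows essentially the same route as the paper: integrate the logarithmic-derivative identity \eqref{vnbis} over $[r_1,2r_2]$ and $[2r_2,r_3]$, sandwich $N(r)$ above and below by $N(2r_2)$ via the perturbed monotonicity of Theorem \ref{vimp}, eliminate $N(2r_2)$ by the weighted combination (the paper's \eqref{top}--\eqref{warning}), divide by $\alpha_0+\beta_0$, and finally choose $\alpha=\sqrt M$ to convert $M/(\alpha+1)$ into $\sqrt M$. The only cosmetic difference is that the paper records explicitly that the $(2\alpha+n)$ terms enter with the favorable nonpositive net coefficient $-(1-\overline C^{-2})(2\alpha+n)$ and are simply discarded, which is what your weighting achieves implicitly.
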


\begin{proof}
Returning to \eqref{vn}, we rewrite it in the following form
\begin{equation}\label{vnbis}
\frac{d}{dr} \log\left(\frac{H(r)}{r^{2 \alpha+ n}}\right) = O(1) + \frac{1}{(\alpha +1)r} N(r),\ \ \ \ 0<r<R_1,
\end{equation}
where $|O(1)| \le C$, with $C$ universal. Without loss of generality we assume that $R_1\le 1$. From Theorem \ref{vimp} we have
\[
e^{C_1r} (N(r) + C_2 M r^2) \le e^{C_1\rho} (N(s) + C_2 M s^2), \ \ \ \ \ \ \ \ \ \text{for}\  0<r<s<R_1.
\]
The latter monotonicity property implies, in particular, the existence of universal constants $C_2>0$ and $\overline C>0$, such that
\begin{equation}\label{ineq}
N(r) \leq \overline C (N(s) + C_2 M),\ \ \ \ \ \ \ \ \ \text{for}\  0<r<s<R_1.
\end{equation}
Without of loss of generality we assume $\overline C \ge 1$. Suppose now that $0<r_1 < r_2 < 2r_2 < r_3< R_1$.  
Integrating \eqref{vnbis} between $r_1$ and $2r_2$, and using \eqref{ineq}, we find 
\begin{equation}\label{top}
\frac{\log \frac{H(2r_2)}{H(r_1)} - C}{\log \left(\frac{2r_2}{r_1}\right)} - (2\alpha + n)  \le   \frac{\overline C}{\alpha + 1} \left(N(2r_2) + C_2 M\right).
\end{equation}
Next, we integrate \eqref{vnbis} between $2r_2$ and $r_3$, and again using \eqref{ineq} we find 
\begin{equation}\label{bottom}
\frac{\overline C}{\alpha + 1} \left(N(2r_2) - \overline C C_2 M \right)
\le  \overline{C}^2 \left[\frac{\log \frac{H(r_3)}{H(2r_2)} + C}{
 \log\left(\frac{r_3}{2r_2}\right)}  - (2\alpha + n)\right].
\end{equation}
Combining \eqref{top} and \eqref{bottom} we conclude
\[
\frac{\log \frac{H(2r_2)}{H(r_1)} - C}{\overline{C}^2 \log \left(\frac{2r_2}{r_1}\right)}   \le  \frac{\log \frac{H(r_3)}{H(2r_2)} + C}{
 \log\left(\frac{r_3}{2r_2}\right)} + C' \frac{M}{\alpha +1} - \left(1 - \frac{1}{\overline{C}^2}\right)(2\alpha + n),
 \]
 where we have let $C' = (\overline C + 1)/\overline C$. Since $\overline C \ge 1$, if we now set
\[
\alpha_0 = 
 \log\left(\frac{r_3}{2r_2}\right),\ \ \ \beta_0 = \overline C^{2} \log \left(\frac{2r_2}{r_1}\right),
 \]
then  we obtain
\begin{equation}\label{warning}
\alpha_0 \log \frac{H(2r_2)}{H(r_1)} \le \beta_0  \log \frac{H(r_3)}{H(2r_2)}  + C (\alpha_0 + \beta_0) + C' \frac{M}{\alpha + 1} \alpha_0 \beta_0.
\end{equation}
Dividing both sides of the latter inequality by the quantity $\alpha_0 + \beta_0$, we find
\[
 \log \left(\frac{H(2r_2)}{H(r_1)}\right)^\frac{\alpha_0}{\alpha_0 + \beta_0} \le   \log \left(\frac{H(r_3)}{H(2r_2)}\right)^\frac{\beta_0}{\alpha_0 + \beta_0} + C  + C' \frac{M}{\alpha + 1} \frac{\alpha \beta_0}{\alpha_0 + \beta_0}.
\]
This gives
\begin{equation}\label{warning2}
\log H(2r_2) \le \log \left[H(r_3)^\frac{\beta_0}{\alpha_0 + \beta_0} H(r_1)^\frac{\alpha_0}{\alpha_0 + \beta_0}\right] + C  + C' \frac{M}{\alpha + 1} \alpha_0,
\end{equation}
 where we have used the trivial estimate $\frac{\beta_0}{\alpha_0 + \beta_0} \le 1$. Exponentiating both sides of \eqref{warning2}  and letting $\alpha = \sqrt M$, we reach the desired conclusion \eqref{warning4}.

\end{proof}

Lemma \ref{L:3sH} implies the following three-sphere theorem for the $L^\infty$ norms.

\begin{lemma}\label{L:3sh}
Under the hypothesis of  Theorem \ref{vimp}, let $0<r_1 < r_2 < 2r_2 < r_3< R_1$. Then, there exist universal constants $\overline C, C, C^\star$ and $C'$ such that, letting
\[
\alpha_1 = 
 \log\left(\frac{r_3}{2(r_2 + r_3)/3}\right),\ \ \ \beta_1 = \overline C^{2} \log \left(\frac{2(r_2 + r_3)/3}{r_1}\right),
\]
we obtain
\begin{align}\label{3ball2}
& ||u||_{L^{\infty}(\Om \cap B_{r_2})}  \leq C e^{C^\star \sqrt{M}} \left(\frac{r_3}{r_3 - 2r_2}\right)^{\frac n2}  \left(\frac{r_3}{2(r_2+r_3)/3}\right)^{C''\sqrt{M}} 
\\
& \times  ||u||_{L^\infty(\Om\cap B_{r_3}})^\frac{\beta_1}{\alpha_1 + \beta_1} ||u||_{L^\infty(\Om\cap B_{r_1}})^\frac{\alpha_1}{\alpha_1 + \beta_1}.
\notag
\end{align}
\end{lemma}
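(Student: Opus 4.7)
The strategy is to reduce the three-sphere estimate for $L^\infty$ norms to the one for $H$ provided by Lemma \ref{L:3sH}, by sandwiching $\|u\|_{L^\infty(\Om\cap B_{r_2})}$ between two weighted quantities of type $H$. For this I use a boundary $L^\infty$-$L^2$ estimate on the inner end and an elementary pointwise bound from $L^\infty$ to $H$ on the two outer ends, working throughout with $\alpha=\sqrt M$ as in the proof of Lemma \ref{L:3sH}.

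First, since $u$ vanishes on $\partial\Om\cap B_{R_1}$ and solves \eqref{m1} with $\|V\|_{L^\infty}\le M$, a boundary Moser iteration applied with inner radius $r_2$ and outer radius $R:=(r_2+r_3)/3$ yields
\[
\|u\|_{L^\infty(\Om\cap B_{r_2})}^2 \le \frac{C\,e^{C\sqrt M}}{(R-r_2)^n}\int_{\Om\cap B_R}u^2 \;=\; \frac{C\cdot 3^n\,e^{C\sqrt M}}{(r_3-2r_2)^n}\int_{\Om\cap B_R}u^2,
\]
where the factor $e^{C\sqrt M}$ absorbs the standard dependence of Moser on $\|V\|_{L^\infty}$. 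The choice of $R$ is made so that $2R=2(r_2+r_3)/3$ is precisely the intermediate radius appearing in $\alpha_1,\beta_1$. Next, on $B_R$ the weight satisfies $(4R^2-|x|^2)^\alpha\ge (3R^2)^\alpha$, and $\mu\ge\lambda$ by \eqref{v0}, so
\[
\int_{\Om\cap B_R}u^2 \;\le\; \lambda^{-1}(3R^2)^{-\sqrt M}\,H(2R).
\]

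I then apply Lemma \ref{L:3sH} with its triple $(r_1,r_2,r_3)$ replaced by $(r_1,R,r_3)$; the ordering $r_1<R<2R<r_3$ is valid because $2R<r_3\iff 2r_2<r_3$, and $r_1<R$ follows from $3r_1<3r_2\le r_2+r_3$. The parameters $\alpha_0,\beta_0$ of Lemma \ref{L:3sH} then coincide with the $\alpha_1,\beta_1$ of the present lemma, and I obtain a three-sphere bound for $H(2R)$ in terms of $H(r_1)^{\alpha_1/(\alpha_1+\beta_1)}$, $H(r_3)^{\beta_1/(\alpha_1+\beta_1)}$, and the factor $(r_3/(2R))^{C'\sqrt M}$. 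Finally, at the two outer endpoints I revert from $H$ to $L^\infty$ through the elementary majorization
\[
H(r)\;\le\; \lambda^{-1}\|u\|_{L^\infty(\Om\cap B_r)}^2 \int_{B_r}(r^2-|x|^2)^\alpha\,dx \;\le\; C\,r^{n+2\sqrt M}\|u\|_{L^\infty(\Om\cap B_r)}^2,
\]
where the weight integral equals $\tfrac12\omega_{n-1}r^{n+2\alpha}B(n/2,\alpha+1)$ and $B(n/2,\alpha+1)$ is bounded by a universal constant uniformly in $\alpha\ge 0$.

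Chaining these four inequalities and taking square roots gives \eqref{3ball2}. The main obstacle, the step most likely to require care, is the book-keeping of the residual constants and $r$-powers. After grouping, the surplus factor reduces, modulo a universal $e^{C\sqrt M}$, to something of the form $(r_1/r_3)^{ns/2}(r_3/(2R))^{c_1\sqrt M}(r_1/(2R))^{c_2\sqrt M}$, with $s=\alpha_1/(\alpha_1+\beta_1)\in[0,1]$ and $c_1,c_2$ universal. Two observations make this manageable: since $r_1<r_3\le R_1\le 1$, both $(r_1/r_3)^{ns/2}$ and $(r_1/(2R))^{c_2\sqrt M}$ are bounded by $1$; and since $2r_2<r_3<4R$, the ratio $r_3/(2R)$ lies in $(1,2)$, so $(r_3/(2R))^{c_1\sqrt M}\le 2^{c_1\sqrt M}\le e^{C\sqrt M}$, which can be absorbed into the final $e^{C^\star\sqrt M}$ while the remaining factor is matched with $(r_3/(2(r_2+r_3)/3))^{C''\sqrt M}$ by a suitable choice of $C''$.
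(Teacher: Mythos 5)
Your argument is correct and follows essentially the same route as the paper: a boundary $L^\infty$--$L^2$ estimate at the intermediate radius $(r_2+r_3)/3$ (the paper gets it via flattening plus odd reflection and Theorem 8.17 of \cite{GT}, with the $(1+\|V\|_{L^\infty})^{n/2}$ factor absorbed into $e^{C^\star\sqrt M}$), followed by Lemma \ref{L:3sH} applied to the triple $(r_1,(r_2+r_3)/3,r_3)$, and elementary conversions back to $L^\infty$ norms. The only cosmetic difference is that you convert directly between $\int u^2$, $H$, and $L^\infty$ (tracking and absorbing the resulting $r^{2\sqrt M}$ weight factors), whereas the paper routes the same comparisons through the intermediate quantity $h(r)=\int_{\Om\cap B_r}u^2\mu$ and its three-sphere inequality.
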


\begin{proof}
We introduce the quantity
\begin{equation}\label{geh}
h(r)= \int_{\Om \cap B_r} u^2 \mu.
\end{equation}
One has trivially 
\[
H(r) \leq r^{2\alpha} h(r),\ \ \ \ \  \text{and}\ \ \ \ \ h(r) \leq \frac{H(\rho)}{(\rho^2 - r^2)^{\alpha}},\  0 < r < \rho < R_1.
\]
Using such estimates in \eqref{warning4} we arrive at
\begin{equation}\label{3ball1}
h(r_2) \le e^C  (\frac{r_3}{2r_2})^{C''\sqrt{M}}  h(r_3)^\frac{\beta_0}{\alpha_0 + \beta_0} h(r_1)^\frac{\alpha_0}{\alpha_0 + \beta_0},
\end{equation}
with the universal constant $C'' = C' + 2$. 

Since $u$ vanishes continuously on  $\Gamma \subset \partial \Om$ by classical boundary estimates there exists $C = C(n,\lambda)>0$  such that for any $x_0\in \Gamma$ and $0<r<\rho< R_1$ one has
\begin{align}\label{best}
\underset{\Om \cap B(x_0,r)}{\sup}\ |u| \le \frac{C (1+ ||V||_{L^\infty(\Om)})^{\frac n2}}{(\rho-r)^{\frac n2}} \left(\int_{\Om \cap B(x_0,\rho)} u^2\right)^{\frac 12}.
\end{align}
The estimate \eqref{best} can be established as follows. First, we locally flatten the boundary of $\Om$ obtaining an equation of the type \eqref{m1} above, in which the principal part is a uniformly elliptic operator with $C^{0,\operatorname{Dini}}$ coefficients. Secondly, we perform an odd reflection to reduce the above estimate to an interior one for a variable coefficient operator in which now the coefficients are just bounded measurable. We can then invoke Theorem 8.17 in \cite{GT} with $p=2$ to conclude the above inequality \eqref{best}. 
From \eqref{best} and \eqref{v0} above, we immediately obtain for any  $0<r<\rho< R_1$
\begin{equation}\label{lle}
||u||_{L^{\infty}(\Om \cap B_{r})} \le  \frac{C (1+ ||V||_{L^\infty(\Om)})^{\frac n2}}{(\rho-r)^{\frac n2}} h(\rho)^{\frac 12}.
\end{equation}
If we now use \eqref{lle} with $r = r_2$ and $\rho = (r_3 + r_2)/3$, we obtain
\[
||u||^2_{L^{\infty}(\Om \cap B_{r_2})} \leq \frac{C}{(r_3 - 2r_2)^n} C(1+ ||V||_{L^\infty(\Om)})^n h((r_2 + r_3)/3)
\]
Since $r_1 <(r_2 + r_3)/3 < 2(r_2 + r_3)/3 < r_3$, we can apply \eqref{3ball1}, with $r_2$ replaced by
$(r_2 + r_3)/3$, obtaining
\[
h((r_2 + r_3)/3) \le e^C  (\frac{r_3}{2(r_2+r_3)3})^{C''\sqrt{M}}  h(r_3)^\frac{\beta_1}{\alpha_1 + \beta_1} h(r_1)^\frac{\alpha_1}{\alpha_1 + \beta_1},
\]
where 
\[
\alpha_1 = 
 \log\left(\frac{r_3}{2(r_2 + r_3)/3}\right),\ \ \ \beta_1 = \overline C^{2} \log \left(\frac{2(r_2 + r_3)/3}{r_1}\right).
\]
Combining the last two inequalities we find 
\[
||u||^2_{L^{\infty}(\Om \cap B_{r_2})} \leq C(1+ ||V||_{L^\infty(\Om)})^n  \frac{1}{(r_3 - 2r_2)^n}  \left(\frac{r_3}{2(r_2+r_3)/3}\right)^{C''\sqrt{M}}  h(r_3)^\frac{\beta_1}{\alpha_1 + \beta_1} h(r_1)^\frac{\alpha_1}{\alpha_1 + \beta_1}.
\]
Next, from \eqref{v0} we have the trivial estimate
\[
h(r) \le  \lambda^{-1} \omega_n r^n ||u||^2_{L^\infty(\Om\cap B_r)},
\]
where $\omega_n$ is the $n$-dimensional volume of the unit ball in $\Rn$. Together with the previous estimate, this gives \eqref{3ball2},
where $C^\star>0$ is such that $(1+x)^{\frac n2} \le e^{C^\star \sqrt x}$ for every $x\ge 0$. 

\end{proof}

\medskip

Suppose now that $z_0\in \overline \Om$ is a point at which the following holds:
\begin{itemize}
\item[(i)] $A(z_0) =  I_n$; 
\item[(ii)] $\Om \cap B_{r_3}(z_0)$ is generalized star-shaped with respect to $z_0$ 
as in \eqref{dev1} above.
\end{itemize}
Then, arguing as in the proof of \eqref{3ball2}, we obtain:

\begin{align}\label{3ball10}
& ||u||_{L^{\infty}(\Om \cap B_{r_2}(z_0))}  \leq C e^{C^\star \sqrt{M}} \left(\frac{r_3}{r_3 - 2r_2}\right)^{\frac n2}  \left(\frac{r_3}{2(r_2+r_3)/3}\right)^{C''\sqrt{M}} 
\\
& \times  ||u||_{L^\infty(\Om\cap B_{r_3}(z_0)})^\frac{\beta_1}{\alpha_1 + \beta_1} ||u||_{L^\infty(\Om\cap B_{r_1}(z_0)})^\frac{\alpha_1}{\alpha_1 + \beta_1}.
\notag
\end{align}

\medskip
 
\begin{rmrk}\label{R:int}
Before proceeding, we pause to  note that the interior  analogue of Theorem \ref{vimp} continues to be valid, i.e., when $\Om \cap B_r(z_0)=B_r(z_0)$. This follows from the fact that, in such  situation, thanks to the presence of the weight $(r^2 - |x-z_0|^2)^{\alpha}$ in the definitions \eqref{vh} and \eqref{vi}, in the computations leading to \eqref{gfve} in Proposition \ref{P:fvegen} above all the boundary integrals, with the exception of those in \eqref{bdc} above, cancel. However, \eqref{bdc} continues to be valid since, thanks to \eqref{ZSr} above, the whole domain $B_r(z_0)$ is star-shaped in the generalized sense of \eqref{sr} with respect to the matrix-valued function $A(x)$ and the center $z_0$. 
From the interior analogue of Theorem \ref{vimp} we can subsequently deduce  that the interior analogues of \eqref{3ball1}, \eqref{3ball2} and \eqref{3ball10}
 also hold for solutions of $\operatorname{div} (A(x) Du)=Vu$. 
\end{rmrk}

\section{The main growth lemmas}\label{S:gm}

This section is devoted to proving two quantitative growth lemmas which constitute the backbone of the main result in this paper, Theorem \ref{main} above. As it will soon become apparent, the treatment of variable coefficients operators requires a certain amount of technical work. In this respect, the core result of this section is Lemma \ref{imp2} below.

\begin{lemma}\label{L:vegen}
Let $\Om, u, x_0$ be as in Theorem \ref{main}. Then, there exist universal constants $L_1, L_2$ such that for a sufficiently small universal $r_0$ 
\begin{equation}\label{h34}
\ve= \sup_{B_{\frac{r_0}{4}}(x_0) \cap \Om} |u| \geq L_1 \exp ( - L_2 (\sqrt{M} + 1)).
\end{equation}
\end{lemma}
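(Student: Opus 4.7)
The approach is to propagate the normalization $\sup_{\Om\cap B_1(x_0)}|u|\ge 1$ down to the small scale $r_0/4$ through a finite chain of three-sphere inequalities—the boundary version \eqref{3ball10} is the workhorse, with the interior analogue of Remark \ref{R:int} available as a fallback in the middle of the chain. Fix $r_0\in(0,R_0)$ universal and, after a translation/rotation, assume $x_0=0$ and $\nu(0)=e_n$. Since $\sup_{\Om\cap B_1}|u|\ge 1$, pick $\tilde x\in\Om\cap B_1$ with $|u(\tilde x)|\ge 1/2$. Introduce a dyadic sequence of scales $r_j=2^j r_0/4$ for $j=0,\ldots,J$, with $J=\lceil \log_2(4/r_0)\rceil$ universal.

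At scale $r_j$, Lemma \ref{L:star} produces the interior anchor $y_j=-4\Lambda(r_j)r_j\, e_n$; the affine substitution $T_{y_j}$ from Section \ref{S:moving} normalizes $A_{y_j}(0)=I_n$; Lemma \ref{L:gens} secures the generalized star-shape \eqref{St1}; and the inclusion \eqref{cont} transports balls back and forth with bounded distortion, the offset $|p'|\le\lambda^{-1/2}a_j$ being much smaller than $r_j$ because $\Lambda(r_j)$ is small. Applying the boundary three-sphere \eqref{3ball10} at each scale yields
\begin{equation*}
\sup_{\Om\cap B_{r_j}(y_j)}|u|\ \le\ Ce^{C\sqrt M}\ \sup_{\Om\cap B_{c r_j}(y_j)}|u|^{\theta}\ \sup_{\Om\cap B_{r_j/c}(y_j)}|u|^{1-\theta},
\end{equation*}
with universal $C,c,\theta\in(0,1)$. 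Since the ratios of the three radii are kept fixed, $\theta$ is bounded away from $0$ and $1$ (inspect $\alpha_1,\beta_1$ in Lemma \ref{L:3sh}).

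Chain these $J$ inequalities from $r_0/4$ up to scale $\sim 1$, replacing $B_{r_j}(y_j)$ by $B_{r_j}(x_0)$ at the cost of harmless universal factors (since $|y_j-x_0|=4\Lambda(r_j)r_j\ll r_j$). Writing $h_k=\sup_{\Om\cap B_{r_k}(x_0)}|u|$, an iteration gives
\begin{equation*}
h_k\ \le\ C_0^{\,1-(1-\theta)^k}\, (Ce^{C\sqrt M})^{c_k}\, \ve^{(1-\theta)^k},\qquad c_k\le 1/\theta .
\end{equation*}
Applied at $k=J$, where $h_J\ge 1/2$ because $B_{r_J}(x_0)\ni\tilde x$, this can be solved for $\ve$ to produce $\ve\ \ge\ L_1\exp(-L_2(\sqrt M+1))$ with universal $L_1,L_2$, since both $J$ and $(1-\theta)^J$ are universal.

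The chief technical difficulty is the uniform management of constants across the chain: every scale requires its own affine normalization $T_{y_j}$, the target ball $B_{r_0/4}(x_0)\cap\Om$ is centered at $x_0$ rather than at any $y_j$, and the distortion \eqref{cont} and the offset $|p'|$ must be re-controlled at each level. The Dini condition \eqref{e20} is the essential ingredient: it forces $\Lambda(r_j)$ to be small for small $r_j$, so that $y_j$ stays uniformly close to $x_0$ and the generalized star-shape of Lemma \ref{L:gens} survives at every scale. Because $r_0$ is universal, the length $J$ is universal and the compounded constant collapses into the claimed $L_1\exp(-L_2(\sqrt M+1))$.
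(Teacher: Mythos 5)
Your plan has a genuine gap at its key step: the dyadic, concentric chain centered at $x_0$ must reach scales $r_j\sim 1$ in order to capture the point $\overline{x}\in \Om\cap B_1(x_0)$ where $|u(\overline{x})|\ge 1$, but the boundary three-sphere estimate \eqref{3ball10} (equivalently Lemma \ref{L:3sh}) is simply not available at those scales. Its hypotheses rest on Theorem \ref{vimp}, which in turn needs the generalized star-shapedness supplied by Lemmas \ref{L:star} and \ref{L:gens}; these hold only for $r\le R_0$, where $R_0$ is universal but possibly very small, being constrained by \eqref{1000} and \eqref{fr3} (e.g.\ $\Lambda(R_0)<1/1000$), by the range of the local graph representation of $\partial\Om$ in Definition \ref{2.1}, and by the threshold $\overline{R}$ in Theorem \ref{vimp}. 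Your closing remark that the Dini condition makes $\Lambda(r_j)$ small ``at every scale'' is exactly backwards for your purposes: Dini gives smallness of $\Lambda$ only at \emph{small} radii, whereas the top of your chain ($r_j$ comparable to $1$) is where you need it and where nothing controls $\Lambda(r_j)$ or the star-shapedness. So the upper links of your chain are unjustified, and the normalization $\sup_{\Om\cap B_1}|u|\ge 1$ never gets connected to $\ve$.

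The paper avoids this by never increasing the scale: it fixes $r_0$ small and universal, derives from \eqref{3ball2} the two-ball estimate $||u||_{L^\infty(\Om\cap B_{3r_0/8}(z))}\le C e^{C(\sqrt M+1)}||u||_{L^\infty(\Om\cap B_{r_0/4}(z))}^{\theta_0}$ (the large ball being absorbed via $||u||_{L^\infty}\le C_0$), valid at every $z\in\Gamma\cap B_{3/2}$ and in interior form, and then propagates smallness \emph{spatially} from $B_{r_0/4}(x_0)$ to the point $\overline{x}$ along a chain of balls of fixed radius $\sim r_0$: an interior chain staying at distance $\sim r_0$ from $\partial\Om$ when $d(\overline{x},\Gamma\cap\overline{B}_{3/2})\ge 3r_0/16$, and a chain of boundary balls centered on $\Gamma$ otherwise. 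The number of balls in the chain depends only on $r_0$ and $\Om$, so the compounded constants remain universal, yielding $1\le C\,\ve^{\theta^*}e^{C'(\sqrt M+1)}$ and hence \eqref{h34}. If you want to salvage your scheme you would have to add exactly this spatial-propagation (Harnack-chain) step at a fixed small scale; as written, the argument does not go through. (Separately, your iteration formula assigns the exponent $(1-\theta)^k$ to $\ve$ while your three-sphere inequality gives the small ball the exponent $\theta$, so the bookkeeping should produce $\ve^{\theta^k}$; this is minor compared with the scale issue.)
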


\begin{proof}

By a rotation and translation we can assume that $x_0 = 0$ and $\nu(0) = e_n$. As in Lemma \ref{L:star} above, we consider the interior point $y_0 = - a \nu(0)$ associated with $x_0 = 0$. If $A(y_0) \not= I_n$, we use the change of coordinates $T_{y_0}$ in \eqref{T} of Section \ref{S:moving} above, and indicate with $\Om_{y_0} = T_{y_0}(\Om)$. Having done this, we now have $A_{y_0}(0)=I_n$. We want to apply Lemma \ref{L:3sh} to obtain  \eqref{3ball2} with $\Om$ replaced by $\Om_{y_0}$ and $u$ replaced by $u_{y_0}$. Thanks to Lemma \ref{L:gens} for every $0<r \le R_0$ the set $\Om_{y_0} \cap B_{\sqrt \lambda (r-a)}$ is generalized star-shaped with respect to $0$, therefore the hypothesis of Lemma \ref{L:3sh} are fulfilled by $\Om_{y_0}$ and $A_{y_0}$.    
If we let 
\[
r_1= \lambda^{5/2}( \frac{r}{4}-a),\ \ \ \ r_2= \lambda(\frac{7r}{15}- \frac{a}{4})\ \ \ \ r_3= \lambda(r-a),
\]
 then since $\lambda \le 1$, we trivially have  $r_3 \le \lambda^{1/2}( r - a)$, so that we fall within the range of \eqref{St1}
 in Lemma \ref{L:gens} above, and we also clearly have $0<r_1 < r_2 < 2r_2 < r_3$. We conclude that, with the above choice of $r_1, r_2, r_3$, the estimate \eqref{3ball2} holds with $\Om$ replaced by $\Om_{y_0}$, and $u$ replaced by $u_{y_0}$. Since $|u|\leq C_0$ trivially implies $||u_{y_0}||_{L^{\infty}(B_{r_3} \cap \Om_{y_0})} \leq C_0$, we obtain from \eqref{3ball2} 
\begin{equation}\label{l32}
||u_{y_0}||_{L^{\infty}(\Om_{y_0}\cap B_{3 \lambda r/8}(p'))} \leq  C \exp(C(\sqrt{M}+ 1) ) ||u_{y_0}||_{L^{\infty}(\Om_{y_0} \cap B_{\lambda^2r/4}(p'))}^{\theta_0}.
\end{equation}
where $p'= T_{y_0}(0)$. From \eqref{cont} and \eqref{l32},  by using $T_{y_0}^{-1}$, we find 
\begin{equation}\label{g32}
||u||_{L^{\infty}(\Om \cap B_{3 \lambda^{3/2} r/8})} \leq  C \exp(C(\sqrt{M}+ 1) ) ||u||_{L^{\infty}(\Om \cap B_{\lambda^{3/2} r/4})}^{\theta_0}.
\end{equation}
If in \eqref{g32} we substitute $\lambda^{3/2} r$ with $r$, we obtain for all sufficiently small $r$
\begin{equation}\label{g34}
||u||_{L^{\infty}(\Om \cap B_{3 r/8})} \leq  C \exp(C(\sqrt{M}+ 1) ) ||u||_{L^{\infty}(\Om \cap B_{ r/4})}^{\theta_0}.
\end{equation}
Recalling that the origin is just the image of an arbitrary point $z \in \Gamma \cap B_{3/2}$ after translation and rotation,  we conclude from \eqref{g34} that for any such $z \in \Gamma \cap B_{3/2}$ and $r_0$ sufficiently small, but universal,  we have 
\begin{equation}\label{g35}
||u||_{L^{\infty}(\Om \cap B_{3 r_0/8}(z))} \leq  C \exp(C(\sqrt{M}+ 1) ) ||u||_{L^{\infty}(\Om \cap B_{ r_0/4}(z))}^{\theta_0}.
\end{equation}
In a similar way, the interior analogue  of \eqref{g35} can be established, i.e., when the relevant ball does not intersect $\partial \Om$.  Keeping in mind that we have let $x_0 = 0$, and that, for a suitably fixed $r_0$, we have defined
\[
\ve= \sup_{B_{\frac{r_0}{4}}(x_0) \cap \Om} |u|,
\]
we can re-write \eqref{g34} as follows 
\begin{equation}\label{g34bis}
||u||_{L^{\infty}(\Om \cap B_{3 r_0/8})} \leq  C_1 \exp(C(\sqrt{M}+ 1) )\ \ve^{\theta},
\end{equation}
where $\theta>0$ is universal, and $C_1>0$ is a new constant that also incorporates the $L^\infty$ bounds for $u$ in $\Om \cap B_{r_0}$, controlled in turn by the quantity $C_0$ in Theorem \ref{main}. 

To complete the proof we now argue as follows.
By the assumption in Theorem \ref{main} that  $\sup_{\Om \cap B_{1}} |u| \geq 1$, there  exists $\overline{x} \in \Om \cap B_1$ such that $|u(\overline{x})| = \sup_{\Om \cap B_{1}} |u| \geq 1$. Let
\begin{equation}
d_1= d(\overline{x},\Gamma \cap \overline{B}_{\frac 32}),
\end{equation}
where $d(x, H)= \inf \{|x-h|\mid h \in H\}$. There exist two possibilities.

\medskip

\noindent \textbf{Case $1$:} $d_1 \geq \frac{3r_0}{16}$. In such case, we take a  chain of balls $B_{\ell r_0}(x_i)$, $i =  1,...,d$, where $\ell$ is a sufficiently small constant depending on $\Om$, say $\ell< \frac{1}{64}$. Here, as before, we agree to take $x_0 = 0$, and the balls in the chain can be so chosen that $x_1 \in B_{5r_0/16}$, $x_{i+1} \in B_{\frac{\ell r_0}{2}}(x_i)$ for $i = 1,...,d-1$, and $\overline{x} \in B_{\ell r_0}(x_d)$. We note that $d$ depends on $r_0$, as well as $\Om$. Moreover, since $\Om$ is $C^{1, Dini}$ and hence in particular a Lipschitz domain, one can ensure that the balls $B_{3\ell r_0}(x_i)$ are at a distance  at least $\ell r_0$ from $\partial \Om$.

Since $\ell < \frac{1}{64}$, it is easy to check by triangle inequality that  $B_{\ell r_0/2}(x_1) \subset \Om \cap B_{3r_0/8}$. We thus find from \eqref{g34bis}
\begin{equation}\label{e'31}
||u||_{L^{\infty}(B_{\ell r_0/2}(x_1))} \leq C_1 \ve^{\theta} \exp( C(\sqrt{M} +1)).
\end{equation}
Since the balls $B_{3\ell r_0}(x_i)$  are at a distance comparable to $r_0$ from the  boundary, and since $B_{\ell r_0/2}(x_{i+1}) \subset B_{\ell r_0}(x_i)$ by the triangle inequality, we can now iterate the estimate \eqref{e'31} by using the interior $L^{\infty}$ three-ball theorem as in Lemma 3 in \cite{Zhu1} with $r_1= \ell r_0/2, r_2= \ell r_0, r_3= 3\ell r_0$. Since the balls $B_{3\ell r_0}(x_i)$  are at a distance comparable to $r_0$ from the  boundary, and since $B_{\ell r_0/2}(x_{i+1}) \subset B_{\ell r_0}(x_i)$ by the triangle inequality, we can iterate the estimate \eqref{e'31} by using the interior analogue of \eqref{g35}. After the  $d$-th iteration we find
\begin{equation}\label{e32}
||u||_{L^{\infty}(B_{\ell r_0}(x_d))} \leq C_3 \ve^{\theta^*} \exp(C_4 (\sqrt{M} +1)),
\end{equation}
where the constants $C_3, C_4, \theta^* $ additionally depend on $d$, which in turn depends on $r_0$. Since $\overline{x} \in B_{\ell r_0}(x_d)$, and $|u(\overline{x})| \geq 1$, we obtain from \eqref{e32} 
\begin{equation}\label{e33}
1 \leq ||u||_{L^{\infty}(B_{\ell r_0}(x_d))} \leq C_3 \ve^{\theta^*} \exp(C_4 (\sqrt{M} +1)).
\end{equation}
We thus conclude that  \eqref{h34} holds.

\medskip

\noindent \textbf{Case $2$:} Suppose $d_1 < \frac{3r_0}{16}$, and let $z_0 \in \Gamma \cap \overline{B}_{\frac 32}$ be such that $d_1= |\overline{x}- z_0|$. In this case we take a sequence of balls  centered at $0, y_1, y_2, .....y_d \in \Gamma$ such that $y_1 \in \Om \cap B_{3r_0/16}$, $y_{i+1} \in \Om \cap B_{3r_0/16}(y_i)$ for $i =1,...,d-1$, and $z_0 \in \Om \cap B_{3r_0/16}(y_d)$. Note that $d$ again depends on $r_0$ and $\Om$.  We first observe that \eqref{g34bis} holds as for Case $1$. Moreover, the triangle inequality gives $\Om \cap B_{3r_0/16}(y_1) \subset \Om \cap B_{3r_0/8}$. Combining this with \eqref{g34bis} we obtain
\begin{equation}\label{f33}
||u||_{L^{\infty}(\Om \cap B_{3r_0/16}(y_1))} \leq C \ve^{\theta} \exp( C(\sqrt{M} +1)).
\end{equation}
Now by using the fact that $\Om \cap B_{3r_0/16}(y_{i+1}) \subset \Om \cap B_{3r_0/8}(y_i)$ for each $i$, we can iterate \eqref{f33} by using \eqref{g35} with $z$ replaced by $y_i$, for $i=1,...,d-1$,  and obtain after the $d-$th iteration 
\[
||u||_{L^{\infty}(\Om \cap B_{3r_0/8}(y_d))} \leq C_5 \ve^{\theta^{**}} \exp( C_6 (\sqrt{M} +1)),
\]
where as in Case 1, the constants $C_5, C_6, \theta^{**}$ additionally depend on $d$, which in turn depends on $r_0$. Since $z_0 \in \Om \cap  B_{3r_0/16}(y_d)$ and  $|\overline{x}- z_0| <  \frac{3r_0}{16}$, by the triangle inequality we see that $\overline{x} \in   \Om \cap B_{3r_0/8}(y_d))$. Combining this observation with the fact that   $|u(\overline{x})| \geq 1$, we conclude from the latter inequality that
\[
1 \leq ||u||_{L^{\infty}(\Om \cap B_{3r_0/8}(y_d))} \leq C_5 \ve^{\theta^{**}} \exp( C_6 (\sqrt{M} +1)).
\]
Therefore, in both Cases $1$ and $2$ we see that \eqref{h34} holds.

\end{proof}

The next Lemma \ref{imp2} is the central result of this section. As the reader will see its proof is quite involved. This is unavoidable since we are dealing with variable coefficients, and keeping uniformity matters under control is more delicate than for the standard Laplacian, when $A(x) \equiv I_n$. More specifically, in order to apply Theorem \ref{vimp} above for balls centered at an appropriately chosen $y_0$, we need to use the transformation $T_{y_0}$ in \eqref{T} as an intermediate step to ensure that $A(y_0)=I_n$. The payoff of this is reflected in \eqref{cont}. However, by far \eqref{cont} alone does not suffice to derive our basic estimate \eqref{he} below. We need to crucially use the fact that $T_{y_0}$ is sufficiently close to the identity, in a precise quantitative way, at any given small scale $r$. This is possible thanks to the Lipschitz character of the matrix $A$. We also note that  there will be several intermediate functionals in the proof of Lemma \ref{imp2} below. Their introduction has been necessary to ensure the positivity in the transformed domains of the various weights appearing in the relevant integrands.

In the sequel we will need the following quantity
\[
G(s)= \int_{\Om \cap B_s} u^2 (s^2 - |x|^2)^{\alpha},\ \ \ \ \ \ \ \ \ \ s>0.
\]

\begin{lemma}\label{imp2}
Let $\Om, A(x), u$ and $V$ be  as in Theorem \ref{main}, where we additionally assume  that  $x_0=0$ and $A(0)=I_n$. Then, there exist $R_0<1$ and constants $k, K_1, C, C_1$, depending on $n, \lambda, K, C_0$ and the $C^{1,\operatorname{Dini}}$-character of $ \Om$, but independent of $M$, such that for $0<r<R_0$ one has
\begin{equation}\label{he}
\log\frac{G(r/2)}{G(r/4)} \leq C \sqrt{M}r + e^{C_1 r} \frac{\log \frac{(1+ 4K_1\Lambda(r))(2+ 16 k \Lambda(r))}{(1 -4K_1\Lambda(r))(1-16 k\Lambda(r))}}{\log \frac{(1 -4K_1\Lambda(r))(2- 8k\Lambda(r))}{(1+ 4K_1\Lambda(r))(1+ 8 k \Lambda(r))}} \log\frac{G(r)}{G(r/2)}.
\end{equation}
\end{lemma}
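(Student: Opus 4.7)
My plan is to apply the perturbed-frequency monotonicity of Theorem \ref{vimp} after recentering at a carefully chosen interior point close to the boundary, and then transfer the resulting three-sphere inequality back to the original coordinates. Since by assumption $x_0=0$ and $A(0)=I_n$, I set $a = 4\Lambda(r)r$ and take $y_0 = -a\, e_n$, the interior point of Lemma \ref{L:star}. Applying the change of variables $T_{y_0}$ from Section \ref{S:moving}, in the new variables $y=T_{y_0}(x)$ one has $A_{y_0}(0)=I_n$, and by Lemma \ref{L:gens} the set $\Om_{y_0}\cap B_{\sqrt\lambda(r-a)}(0)$ is generalized star-shaped at $0$ with respect to $A_{y_0}$. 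Theorem \ref{vimp} therefore applies to $u_{y_0}$, giving that $\rho\mapsto e^{C_1\rho}(\hat N(\rho)+C_2 M\rho^2)$ is nondecreasing in the relevant range, where $\hat N$ denotes the generalized frequency of $u_{y_0}$. Proceeding verbatim as in Lemma \ref{L:3sH} in these new coordinates, but with the constant $\overline C$ specialized to the scale $r$ so that $\overline C \le e^{C_1 r}$, one obtains, for any admissible triple $\rho_1<2\rho_2<\rho_3$, the three-sphere bound
\[
\log\frac{\hat H(2\rho_2)}{\hat H(\rho_1)} \le C\sqrt{M}\, r + e^{C_1 r}\, \frac{\log(2\rho_2/\rho_1)}{\log(\rho_3/(2\rho_2))}\, \log\frac{\hat H(\rho_3)}{\hat H(2\rho_2)},
\]
where $\hat H$ is the generalized height of $u_{y_0}$ (the $y$-coordinate analogue of $H$).

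Next, I choose the triple so that, under the pull-back $T_{y_0}^{-1}$, they sandwich the original balls $B_{r/4}$, $B_{r/2}$, $B_r$. From $A(0)=I_n$, the Lipschitz bound on $A$, and $|y_0|=a\le r$, we have $\|A^{\pm 1/2}(y_0)-I\|\le 4K_1\Lambda(r)$ for a universal $K_1=K_1(\lambda, K)$ (absorbing the factor $r\le 1$), and $p':=T_{y_0}(0)$ satisfies $|p'|\le\lambda^{-1/2}a$. Choosing
\begin{align*}
\rho_1 &= \frac{r}{4}(1-4K_1\Lambda(r))(1-16k\Lambda(r)),\\
2\rho_2 &= \frac{r}{2}(1+4K_1\Lambda(r))(1+8k\Lambda(r)),\\
\rho_3 &= r(1-4K_1\Lambda(r))(1-4k\Lambda(r)),
\end{align*}
with $k=k(\lambda)$ incorporating the center shift $a/\rho_i$ at the three scales $r, r/2, r/4$ (giving rise to the coefficients $4, 8, 16$), a direct computation yields that $2\rho_2/\rho_1$ and $\rho_3/(2\rho_2)$ equal respectively the numerator and denominator of the log-ratio appearing in \eqref{he}. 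Summing the ellipsoidal distortion of $T_{y_0}^{-1}$ with the center shift also yields the geometric inclusions $T_{y_0}^{-1}(B_{\rho_1}(0))\subset B_{r/4}(0)$, $B_{r/2}(0)\subset T_{y_0}^{-1}(B_{2\rho_2}(0))$, and $T_{y_0}^{-1}(B_{\rho_3}(0))\subset B_r(0)$.

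The last step is to convert the three-sphere estimate for $\hat H$ into one for $G$. Using $y=T_{y_0}(x)$, the bounds $\mu_{y_0}\in[\lambda^2,\lambda^{-2}]$, and boundedness of $|\det A^{-1/2}(y_0)|$, the above inclusions imply $\hat G(\rho_1)\le C\, G(r/4)$, $G(r/2)\le C\,\hat G(2\rho_2)$, and $\hat G(\rho_3)\le C\, G(r)$, where $\hat G$ is the $y$-coordinate analogue of $G$. Passage between the weighted height $\hat H$ and the weighted $\hat G$ is handled by the bounds on $\mu_{y_0}$, and passage between weighted and weightless integrals is handled through the auxiliary $\hat h(\rho)=\int u_{y_0}^2 \mu_{y_0}$ and the sandwich estimates $\hat H(\rho)\le\rho^{2\alpha}\hat h(\rho)$, $\hat h(\rho)\le\hat H(\rho')/(\rho'^2-\rho^2)^\alpha$ exactly as in the proof of Lemma \ref{L:3sh}. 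Assembling these comparisons with the three-sphere inequality and specializing $\alpha=\sqrt{M}$ yields \eqref{he}, with the $H\leftrightarrow G$ and weight-comparison errors absorbed into the additive $C\sqrt M\, r$ term, and the constant $\overline C^2$ from Lemma \ref{L:3sH} absorbed into the $e^{C_1 r}$ factor. The main technical obstacle is the $M$-sensitivity of the weight $(s^2-|x|^2)^\alpha$ when $\alpha=\sqrt M$: all multiplicative constants arising from the geometric and weight comparisons must be made uniform in $M$, which is precisely why the $\rho_i$ are taken strictly inside the target scales by the margins $(1\pm 4K_1\Lambda(r))$ and $(1\pm c_i k\Lambda(r))$, so that the residual distortion is absorbed into the explicit $e^{C_1 r}$ and $C\sqrt M\, r$ of the statement.
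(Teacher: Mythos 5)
Your overall architecture is the same as the paper's: recenter at $y_0=-a\nu(0)$ with $a=4\Lambda(r)r$, pass to the variables $T_{y_0}$ so that $A_{y_0}(0)=I_n$, invoke the generalized star-shapedness and Theorem \ref{vimp}, derive a three-sphere inequality at three radii all comparable to $r$ (so that the monotonicity constant is $e^{C_1r}$ rather than a fixed $\overline C$), and your choice of $\rho_1,2\rho_2,\rho_3$ does reproduce exactly the log-ratio in \eqref{he}. (One small caveat there: Lemma \ref{L:gens} as stated only gives star-shapedness in $B_{\sqrt\lambda(r-a)}$, which need not contain $B_{\rho_3}$ with $\rho_3\approx r$ when $\lambda<1$; you need the refined radius $\sqrt{\lambda_{y_0}}(r-a)$ with $\lambda_{y_0}\ge 1-Ka$, available because $A(0)=I_n$ and $|y_0|=a$, which is how the paper proceeds.)

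The genuine gap is in the transfer from the transformed height $\hat H$ (weights centered at $0=T_{y_0}(y_0)$) back to $G$ (weights centered at $x_0$, i.e.\ at $p'$ in the new variables). Because the weight exponent is $\alpha=\sqrt M$, any multiplicative comparison of weights by a factor $c$ bounded away from $1$ costs an additive error $\sqrt M\log c$; in particular the sandwich $\hat H(\rho)\le\rho^{2\alpha}\hat h(\rho)$, $\hat h(\rho)\le \hat H(\rho')/((\rho')^2-\rho^2)^{\alpha}$ that you import ``exactly as in Lemma \ref{L:3sh}'' produces a factor $\bigl(\rho^2/((\rho')^2-\rho^2)\bigr)^{\sqrt M}$, i.e.\ an additive $C\sqrt M$ (or $C\sqrt M\log(1/\Lambda(r))$ if you try to keep $\rho'$ within $O(\Lambda(r)r)$ of $\rho$ to match the three scales), and your crude bounds $\mu_{y_0}\in[\lambda^2,\lambda^{-2}]$ and ``boundedness of $|\det A^{-1/2}(y_0)|$'' add constants independent of $r$. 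None of these is of the form $C\sqrt M\,r$, so they do not yield \eqref{he}; and they cannot be tolerated, since in the dyadic iteration of Section \ref{S:gen} a per-scale error that does not decay like $r$ accumulates to an extra factor $\log(1/r)$ in the exponent, destroying the $r^{C(1+\sqrt M)}$ bound of Theorem \ref{main}. This transfer is precisely where the paper's proof does its real work: it never compares weights up to multiplicative constants, but instead proves pointwise domination of the shifted weights on carefully nested balls (the chain through $L_{p'}$, $\tilde H$, $H_1,H_2,H_3$, $G_1,G_2,G_3$, with radius margins $ka$, $ka/2$, $ka/3$, $ka/6$ and $k$ chosen large relative to $K_1,K_2$ as in \eqref{k2}, cf.\ \eqref{A1}--\eqref{A3} and \eqref{cv5}--\eqref{cv7}), and it replaces your crude bounds by the $1+O(r)$ bounds for $\mu_{y_0}$ from Lemma \ref{L:mu} and $|JT_{y_0}|=1+O(a)$, so that every multiplicative error has logarithm $O(r)$ and is absorbable into $C\sqrt M\,r$. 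Your closing remark that constants need only be ``uniform in $M$'' misses this point: a constant $c>1$ independent of $M$ still costs $c^{\sqrt M}$ when it multiplies the weight, so the comparisons must hold with constant $1$ (or $1+O(r)$), which your proposed mechanism does not provide.
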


\begin{proof}

We let $a= 4\Lambda(r)r$ and consider the interior point $ y_0= - a \nu(0)$ associated with $x_0 = 0$. Then, by Lemma \ref{L:star} above, we know  that $\Om \cap B_{r- a}(y_0)$ is star-shaped for every $0<r<R_0$, with $R_0$ as in \eqref{1000}. 

At this point we need to use Theorem \ref{vimp} with balls centered at $y_0\in \Om$. The problem, again, is that to apply such result we need to know that \eqref{dev1} holds with $z_0 = y_0$. In order to achieve this condition, we argue as in Lemma
\ref{L:vegen} and use the transformation $T_{y_0}$ defined by \eqref{T} above to map $y_0$ to $0 = T_{y_0}(y_0) \in \Om_{y_0}$. The new matrix $A_{y_0}$, defined in $\Om_{y_0}$ by \eqref{cv}, verifies $A_{y_0}(0) = I_n$. 

From the Lipschitz continuity of $y \to A(y)$ and the fact that $|y_0|=a$, we find for $\xi\in \Rn$
\[
|<(A(y_0) - A(0))\xi,\xi>| \le ||A(y_0) - A(0)|| |\xi|^2 \le K |y_0| |\xi|^2 = K a |\xi|^2.
\]
This observation and the hypothesis $A(0)=I_n$ imply
\[
(1 - K a) |\xi|^2\ \leq\ <A(y_0) \xi,\xi>\ \leq\ (1+Ka) |\xi|^2,
\]
for all $\xi\in \Rn$. If $\lambda_{y_0}\in (0,1]$ is a number such that for all $\xi\in \Rn$
\[
\lambda_{y_0} |\xi|^2 \ \leq\ <A(y_0)\xi,\xi>\ \leq\  \lambda_{y_0}^{-1} |\xi|^2,
\] 
then it is clear that 
\[
\lambda_{y_0} \ge 1 - K a,\ \ \ \ \ \ \ \lambda_{y_0}^{-1} \le 1 + K a.
\]
We thus infer from \eqref{1000} that for $0<r<R_0$
\begin{align}\label{nt}
\lambda_{y_0}^{1/2} \geq 1- K_1a,\ \ \ \ \ \ \ \ \ \lambda_{y_0} ^{-1/2} \leq 1+ K_1 a,
\end{align}
for some $K_1$ that is a universal multiple of  $K$ in Theorem \ref{main}. We also note that, similarly to \eqref{cont}, from the definition \eqref{T} of $T_{y_0}$  we obtain
\begin{equation}\label{cont1}
  B_{s\sqrt{\lambda_{y_0}}}(T_{y_0}(p)) \subset T_{y_0}(B_s(p)) \subset  B_{\frac{s}{\sqrt{\lambda_{y_0}}}}(T_{y_0}(p)).
\end{equation}
 Then, from the arguments in the proof of Lemma \ref{L:gens}, we have that after the transformation $T_{y_0}$ is applied, the set $\Om_{y_0} \cap B_{\sqrt{\lambda_{y_0}}( r- a)}$ satisfies the generalized star-shaped assumption with respect to $A_{y_0}$ and $0$. Let $p' = T_{y_0}(0)\in \partial \Om_{y_0}$, and note that, similarly to \eqref{pprime} above, we have 
 \begin{equation}\label{pp}
 |p'| \leq \lambda_{y_0}^{-1/2} a \le  (1+ K_1 a)a \le (1+K_1)a = K_2 a,
 \end{equation} 
since $a<1$ for $0<r<R_0$ by \eqref{1000}. For later purposes we note that for every $x\in \Rn$ we have
\begin{equation}\label{a10}
T_{y_0}(x)= T_{y_0}(0) + (T_{y_0}(x) - T_{y_0}(0)) = p' + A(y_0)^{-1/2} x.
\end{equation}
Since we are assuming $A(0) = I_n$, we have
\begin{align*}
& ||(A^{-1/2}(y_0) - A(0)^{-1/2})x||^2 = (A(y_0)^{-1}x,x> +|x|^2 - 2 <A(y_0)^{-1/2}x,x>
\\
& \le (\lambda_{y_0}^{-1/2} - 1)^2 |x|^2 \le K_1 a |x|^2.
\end{align*}
This estimate shows that
\[
||A^{-1/2}(y_0) - A(0)^{-1/2}|| \le (\lambda_{y_0}^{-1/2} - 1) \le K_1 a.
\]
As a consequence, we can write
\begin{equation}\label{a11}
A(y_0)^{-1/2} = I_n + B,
\end{equation}  
where $B$ is a matrix such that $||B|| \le K_1 a$.
 We now introduce the following notations.
\begin{align}\label{cont2}
\lambda_1= 1- K_1a,\ \ \ \ \ \ \ \ \ \  \lambda_2  =1+ K_1 a,
\end{align}
and let 
\[
r_1= \frac{r/4 - ka}{\lambda_2},\ \ \ \ r_2=\frac{ r/2 + ka}{\lambda_1},\ \ \ \ r_3= \frac{ r - ka}{\lambda_2},
\]
where $k$ is a universal number that depends only on the constant $K$ in \eqref{hypA1} above. Specifically, with $K_2 = 1 + K_1$ being the universal constant in \eqref{pp}, we choose $k$ as follows
\begin{equation}\label{k2}
k = 8 K_2 \left(\frac{K_2}{K_1} + 3\right) = 8(1+K_1)(4+K_1^{-1}).
\end{equation}
With $k$ being fixed as in \eqref{k2}, we now further restrict $R_0$ in \eqref{1000} by assuming that the following condition hold 
\begin{equation}\label{fr3}
\Lambda(R_0) \leq \text{min}\left\{\frac{1}{24 K_1 + 64k}, \frac{1}{1000}\right\}.
\end{equation}
This assumption will be in force for the rest of the paper.

We now want to verify that $0<r_1<r_2<r_3$. First, we note that in order to guarantee that $r_1>0$ it suffices to have
\[
\Lambda(R_0) < \text{min}\left\{\frac{1}{16k}, \frac{1}{1000}\right\},
\]
which is of course ensured by \eqref{fr3}. Incidentally, since obviously $4k>K_1$, \eqref{fr3} also ensures that $\lambda_1 > 0$. Having said this, we notice that regardless the value of $k$ it is always true that $r_1<r_2$. Instead, since from \eqref{1000} we know that $R_0<1$, in order to ensure that $r_2<r_3$ it is easy to verify that it suffices to have
\[
\Lambda(R_0) < \text{min}\left\{\frac{1}{3 K_1 + 16k}, \frac{1}{1000}\right\},
\]
which again is guaranteed by \eqref{fr3}.
Later on, we will want to ensure that there exist universal numbers $1< c_1 <c_2$, and $1<c_3<c_4$, such that for $0<r<R_0$ one has
\begin{equation}\label{rs}
c_1 \le \frac{r_2}{r_1} \le c_2,\ \ \ \ \ \ \ \ \ \ \ \ \ \ c_3 \le \frac{r_3}{r_2} \le c_4.
\end{equation}
Since 
\[
\frac{r_2}{r_1} = \frac{\frac 12+4k\Lambda(r)}{\frac 14-4k\Lambda(r)}\ \frac{1+K_1 a}{1-K_1 a},\ \ \ \ \ \ \ \ \ \frac{r_3}{r_2} = \frac{1 - 4k\Lambda(r)}{\frac 12 + 4k\Lambda(r)}\ \frac{1-K_1 a}{1+K_1 a},
\]
it is easy to verify that if $\Lambda(R_0) < \min\{1/32 k,1/8K_1\}$, then we have
\[
2\le \frac{\frac 12+4k\Lambda(r)}{\frac 14-4k\Lambda(r)} \le 1028,\ \ \ 1 \le \frac{1+K_1 a}{1-K_1 a} \le 2(1+K_1).
\]
In view of \eqref{fr3} we conclude that \eqref{rs} holds with $c_1 = 2, c_2 = 2056 (1+K_1)$. Furthermore, since \eqref{fr3} guarantees that $\Lambda(R_0) \le \min\{1/64 k,1/24K_1\}$, then we also have $a < 4 \Lambda(R_0) \le \frac{1}{6K_1}$, which gives 
\[
\frac 57 \le \frac{1-K_1a}{1+K_1 a}  \le 1,\ \ \ \ \ \ \frac{15}{9} \le \frac{1 - 4k\Lambda(r)}{\frac 12 + 4k\Lambda(r)} \le 2.
\]
We thus conclude that, by assuming \eqref{fr3}, 
then we guarantee that $r_3/r_2$ satisfy \eqref{rs} with $c_3 = 75/72$ and $c_4 = 2$. In conclusion, for any $0<r<R_0$ both inequalities in \eqref{rs} are in force. In addition, we have $0<r_1<r_2<r_3$ for $0<r<R_0$.

Finally, we want to ensure  that $r_3 < \sqrt{\lambda_{y_0}}(r- a)$ for $0<r<R_0$. This can be seen as follows. By using $\sqrt{\lambda_{y_0}} \geq 1 - K_1 a$ and the fact that $0<r<R_0<1$, see \eqref{1000}, we have 
\begin{align*}
& \sqrt{\lambda_{y_0}}( r- a) - r_3 \geq (1- K_1 a) (r-a) - \frac{ r - ka}{1+ K_1 a} 
\notag
\\
& = \frac{k-1 - K_1^2 a r  + K_1^2 a^2)a}{1+K_1 a} > \frac{ a(k-1 - K_1^2 a)}{1+K_1a} \ge 0,
\end{align*}
provided that $k-1- K_1^2 a \ge 0$. Since, as we have observed above, \eqref{fr3} guarantees that $a < \frac{1}{6K_1}$, we see that this inequality is true.
In conclusion, we have proved that provided that $R_0$ be such that \eqref{fr3} hold, we have 
\[
0<r_1<r_2<r_3<\sqrt{\lambda_{y_0}}(r- a),
\]
and moreover \eqref{rs} is in force.

In what follows, to simplify the notation we will write  $H(r), I(r), N(r)$ to indicate the functions introduced in \eqref{vh}, \eqref{vi} and \eqref{genf} above, but relative to the domain $\Om_{y_0}$, the matrix $A_{y_0}$, the potential $V_{y_0}$ (see \eqref{tran} above), the solution $u_{y_0}$ to \eqref{neweq}, and to balls centered at $0$. Thus, for instance,
\[
H(r)= \int_{\Om_{y_0} \cap B_r} u_{y_0}^2  (r^2 - |y|^2)^{\alpha} \mu_{y_0},
\]
where, by slightly abusing the notation introduced in \eqref{mu} above, we have indicated with $\mu_{y_0}$ the conformal factor
\begin{equation}\label{muy0}
\mu_{y_0}(y) = \frac{<A_{y_0}(y)y,y>}{|y|^2}.
\end{equation}
Similarly, we have
\[
I (r)  = \int_{\Om_{y_0} \cap B_r}   <A_{y_0} Du_{y_0}, Du_{y_0}> (r^2 - |y|^2)^{\alpha+1}  + \int_{\Om_{y_0} \cap B_r} V_{y_0} u_{y_0}^2 (r^2 - |y|^2)^{\alpha+1},
\]
and we let $N(r) = I(r)/H(r)$.

Since $A_{y_0}(0) = I_n$, from \eqref{vnbis} above we obtain 
\begin{equation}\label{vnbis0}
\frac{d}{ds} \log\left(\frac{H(s)}{s^{2 \alpha+ n}}\right) = O(1) + \frac{1}{(\alpha +1)s} N(s),\ \ \ \ 0<s<R_0,
\end{equation} 
for some universal $O(1)$ for which, say, $|O(1)| \le \overline C$.
Furthermore, as we have observed above the set $\Om_{y_0} \cap B_{\sqrt{\lambda_{y_0}}( r- a)}$ satisfies the generalized star-shaped assumption with respect to $A_{y_0}$ and $0$. We are thus in a position to apply Theorem \ref{vimp}, which gives for every $0<s<t< \sqrt{\lambda_{y_0}}( r- a)$
\[
e^{C_1s} (N(s) + C_2 M s^2) \le e^{C_1t} (N(t) + C_2 M t^2).
\]
Observing that, since $\lambda_{y_0} \le 1$, we trivially have $0<r_1<r_2<r_3<\sqrt{\lambda_{y_0}}( r- a)<r<R_0<1$, the above estimate implies in particular for every $0<s<r_2$
\begin{equation}\label{int1}
N(s)  \leq e ^{C_1r} ( N(r_2) +  C_2 M r).
\end{equation}
Similarly, for $r_2<s<r$ we obtain, with $C = C_2 e^{C_1}$,
\begin{equation}\label{int2}
N(s)  \geq e ^{-C_1r} ( N(r_2) - C M r).
\end{equation}
Integrating \eqref{vnbis0} on the interval $[r_1,r_2]$, and using \eqref{int1}, we find
\begin{equation}\label{cr1}
\log\ \frac{H(r_2)}{H(r_1)} \leq (2 \alpha  + n) \log\ \frac{r_2}{r_1} +  e^{C_1r} \frac{1}{\alpha+ 1} N(r_2) \log\ \frac{r_2}{r_1} + C \alpha r   \log\ \frac{r_2}{r_1} + \overline C r,
\end{equation}
where we recall that $\alpha = \sqrt M$, and that without loss of generality we have assumed $M\ge 1$.
Similarly, integrating \eqref{vnbis0} on the interval $[r_2,r_3]$, and using \eqref{int2}, we find
\begin{equation}\label{cr2}
\log\ \frac{H(r_3)}{H(r_2)} \geq (2 \alpha  + n) \log\ \frac{r_3}{r_2}  + e^{-C_1 r} \frac{1}{\alpha+ 1} N(r_2) \log\ \frac{r_3}{r_2}  -  C \alpha r \log\ \frac{r_3}{r_2}  - \overline C r.
\end{equation}
Using \eqref{rs} we obtain from \eqref{cr1} 
\begin{equation}\label{cr3}
\frac{N(r_2)}{\alpha +1 } \geq e^{-C_1 r} \frac{\log\ \frac{H(r_2)}{H(r_1)}}{\log\ \frac{r_2}{r_1}} - (2 \alpha + n)e^{-C_1 r} - C \alpha r - \tilde{C} r,
\end{equation}
where $\tilde C = \overline C/\log c_1$. In the same way, \eqref{rs} and \eqref{cr2} give 
\begin{equation}\label{cr4}
\frac{N(r_2)}{\alpha +1 } \leq  e^{C_1 r} \frac{\log\ \frac{H(r_3)}{H(r_2)}}{\log\ \frac{r_3}{r_2}} - (2 \alpha + n)e^{C_1 r} + C e^{C_1} \alpha r + C^\star r,
\end{equation}
where $C^\star = \overline C/\log c_3$.
Therefore,  from \eqref{cr3} and \eqref{cr4}  by  using the fact that $e^{-C_1 r} \leq e^{C_1 r}$, we find for a different $C_3$ which is still universal that the following holds
\[
e^{-C_1 r} \frac{\log\ \frac{H(r_2)}{H(r_1)}}{\log\ \frac{r_2}{r_1}} \leq e^{C_1 r} \frac{\log\ \frac{H(r_3)}{H(r_2)}}{\log\ \frac{r_3}{r_2}} + C_3 (\sqrt{M} +1)r,
\]
which again implies, for a different universal constant $C_4$, that the following holds, 
\begin{equation}\label{L}
\frac{\log\ \frac{H(r_2)}{H(r_1)}}{\log\ \frac{r_2}{r_1}} \leq e^{2 C_1 r} \frac{\log\ \frac{H(r_3)}{H(r_2)}}{\log\ \frac{r_3}{r_2}} + C_4 \sqrt{M} r.
\end{equation}

If we now define  
\[
r'_1= \frac{ r/4 - ka/2}{\lambda_2},\ \ \ r'_2= \frac{r/2 + ka/2}{\lambda_1},\ \ \ \ r'_3= \frac{ r - ka/2}{\lambda_2},
\]
then we claim that with $k$ as in \eqref{k2}, and $R_0$ such that \eqref{fr3} hold,  for $0<r<R_0$ we have the following implications:
\begin{align}\label{A1}
& y \in  B_{r_2'} (p')\ \Longrightarrow\  \left((r_2')^2  - |y- p'|^2\right)^{\alpha} \le ((r_{2})^2 - |y|^2)^{\alpha}, 
\end{align}
\begin{align}\label{A2}
& y \in   B_{r_1}\ \Longrightarrow\ (r_1^2 - |y|^2)^{\alpha} \leq  \left((r_1')^2 - |y- p'|^2\right)^\alpha, 
\end{align}
and
\begin{equation}\label{A3}
 y \in   B_{r_3} \ \Longrightarrow\ (r_3^2 - |y|^2)^{\alpha} \leq  \left((r_3')^2 - |y- p'|^2\right)^\alpha.
\end{equation}

The validity of \eqref{A1} can be seen as follows. We note that from the definition of $\lambda_1$, the implied inequality in \eqref{A1} is equivalent to
\[
\frac{ \frac{3k^2 a^2}{4} + \frac{rka}{2} - 2<y,p'>((1-K_1a))^2 +  |p'|^2 (1-K_1a)^2}{\lambda_1^2} \geq 0.
\]
It is clear that for the latter inequality to be true it suffices to have
\begin{equation}\label{n1}
\frac{3k^2 a^2}{4} + \frac{rka}{2} \ge 2<y,p'> (1-K_1a)^2.
\end{equation}
Since $|p'| \leq K_2 a$, and $|y|\le |y-p'| + |p'| < \frac{r/2 + ka/2}{\lambda_1} + K_2 a$, we have
\[
2<y,p'> (1-K_1a)^2 \le  K_2 a \left(\frac{r + ka}{\lambda_1} + 2K_2 a\right),
\]
and thus for \eqref{n1} to hold it suffices to have
\[
\frac{2K_2}{\lambda_1} + 8 K_2 \Lambda(r) \left(\frac{k}{\lambda_1} + 2K_2\right) \le  k.
\] 
At this point recall that \eqref{fr3} gives, in particular, $a < \frac{1}{6K_1}$. We thus find $\frac 56 \le \lambda_1 \le 1$ for $0<r<R_0$, and thus
\[
\frac{2K_2}{\lambda_1} + 8 K_2 \Lambda(r) \left(\frac{k}{\lambda_1} + 2K_2\right) \le \frac{12}{5} K_2 + \frac{48}{5} K_2 \frac{k}{64 k} + 16 K_2^2 \frac{1}{24K_1}. 
\]
It is thus clear that in order to ensure that \eqref{k2} does hold it suffices to choose 
\[
k \ge K_2 \left(\frac{K_2}{K_1}+ 3\right).
\] 
Similar elementary considerations show that in order to guarantee the implied inequality in \eqref{A2} it suffices to have
\[
\frac{3 k^2 a^2}{4} + |p'|^2 \lambda_2^2 - 2 <y,p'> \lambda_2^2 \le \frac{kar}{4}.
\]
Since by hypothesis $y \in   B_{r_1}$, in order for the latter inequality to hold it suffices to have
\begin{equation}\label{k4}
3 k^2 \Lambda(r) + 4 K_2^2 \lambda_2^2 \Lambda(r) + \frac{K_2 \lambda_2}{2} \le \frac k4.
\end{equation}
However, \eqref{fr3} trivially implies that for every $0<r<R_0$ one has $3 k^2 \Lambda(r)  \le \frac k8$, and $\lambda_2 \le \frac{25}{24}\le \frac 32$. We conclude that the latter inequality is certainly valid if
\[
\frac k8 + \frac{K_2^2}{K_1} + K_2 \le \frac k4,
\]  
which clearly holds provided that 
\[
k \ge 8 K_2 \left(\frac{K_2}{K_1} + 1\right).
\]
Finally, for the implied inequality in \eqref{A3} to hold true it suffices that
\[
\frac{3k^2 a^2}{4} + |p'|^2 \lambda_2^2 + 2<y,p'>\lambda_2^2 \le k a r, 
\]
which in turn is implied by the inequality
\[
 3 k^2 \Lambda(r) + 4 K_2^2 \lambda_2^2 \Lambda(r) + 2 K_2 \lambda_2 + 8 K_2^2 \lambda_2^2 \Lambda(r) \le k.
 \] 
 Since \eqref{fr3} trivially implies that $3 k^2 \Lambda(r) \le \frac k2$, and that as above $\lambda_2 \le \frac 32$, we conclude that the latter inequality is certainly valid if
\[
k \ge 4 K_2 \left(\frac{K_2}{K_1} + 2\right).
\] 
In conclusion, it is immediate to verify that, having chosen the universal number $k$ as in \eqref{k2} above, the implications \eqref{A1}, \eqref{A2} and \eqref{A3} are all true for all $0<r<R_0$, provided that $R_0$ satisfy \eqref{fr3}. Furthermore, it is also easy to check that the assumptions \eqref{k2} and \eqref{fr3} also guarantee the following inclusions:
\begin{equation}\label{incl}
B_{r_2'}(p') \subset B_{r_2},\ \ \ \ B_{r_1} \subset B_{r_1'(p')},\ \ \ \ B_{r_3} \subset B_{r_3'}(p').
\end{equation}
Therefore, if we now set 
\[
L_{p'}(s)= \int_{\Om_{y_0} \cap B_s(p')} u_{y_0}^2 (s^2 - |y-p'|^2)^{\alpha} \mu_{y_0}, 
\]
where $\mu_{y_0}$ is as in \eqref{muy0} above, then from \eqref{A1}, \eqref{A2}, \eqref{A3} and \eqref{incl} we can easily check that 
\[
L_{p'}(r'_2) \leq H(r_2),\ \ \ \  L_{p'}(r'_1) \geq H(r_1),\ \ \ \   H(r_3) \le L_{p'}(r'_3).
\]
Using these inequalities in \eqref{L} and the definitions of $r_1, r_2, r_3, \lambda_1, \lambda_2$ , we find
\begin{equation}\label{iter1}
\log \frac{L_{p'}(r'_2)}{L_{p'}(r'_1)} \leq e^{C_1 r} \frac{\log \frac{(1+ 4K_1\Lambda(r))( 2+ 16 k \Lambda(r))}{(1 -4K_1\Lambda(r))(1-16 k\Lambda(r))}}{\log \frac{(1 -4K_1\Lambda(r) )(2- 8k\Lambda(r))}{(1+ 4K_1\Lambda(r))(1+ 8 k \Lambda(r))}} \log \frac{L_{p'}(r'_3)}{L_{p'}(r'_2)}+ C_4 \sqrt{M} r.
\end{equation}
In passing from \eqref{L} to \eqref{iter1} we have used the fact that since $r <1 $ we trivially have $1 + 4K_1 \Lambda(r)r \leq 1 + 4K_1 \Lambda(r)$, and $1 - 4K_1 \Lambda(r)r \geq 1 - 4K_1 \Lambda(r)$.

Since $\mu_{y_0}$ defined in \eqref{muy0}
 above is Lipschitz at $0$, see Lemma \ref{L:mu}, there exists a universal $C_5$ such that 
\[
(1- C_5 r) \leq \mu_{y_0}(y) \leq (1+ C_5 r)
\]
for $|y| \leq r$. 
This implies that for $s \leq r$,
\begin{equation}\label{iter2}
(1- C_5 r)\tilde{H}(s) \leq L_{p'}(s)  \leq (1+ C_5 r)\tilde{H}(s),
\end{equation}
where we have let
\[
\tilde{H}(s)= \int_{\Om_{y_0} \cap B_s(p')} u_{y_0}^2 (s^2- |y-p'|^2)^{\alpha}.
\]
Applying \eqref{iter1} and \eqref{iter2} with $s=r_1, r_2, r_3$,  we obtain
\begin{equation}\label{iter3}
\log \frac{(1-C_5 r)\tilde{H}(r'_2)}{(1+ C_5r)\tilde{H}(r'_1)} \leq  e^{C_1 r} \frac{\log \frac{(1+ 4K_1\Lambda(r) )(2+ 16 k \Lambda(r))}{(1 -4K_1\Lambda(r) )(1-16 k\Lambda(r))}}{\log \frac{(1 -4K_1\Lambda(r) )(2- 8k\Lambda(r))}{(1+ 4K_1\Lambda(r) )(1+ 8 k \Lambda(r))}} \log\ \frac{(1+ C_5 r)\tilde{H}(r'_3)}{(1-C_5 r) \tilde{H}(r'_2)}+ C_4 \sqrt{M} r.
\end{equation}
Since $0<r<R_0$ and \eqref{fr3} is in force, \eqref{iter3} implies  for a  universal $C_6$
\begin{equation}\label{iter4}
\log \frac{\tilde{H}(r'_2)}{\tilde{H}(r'_1)} \leq  e^{C_1 r} \frac{\log \frac{(1+ 4K_1\Lambda(r))( 2+ 16 k \Lambda(r))}{(1 -4K_1\Lambda(r))(1-16 k\Lambda(r))}}{\log \frac{(1 -4K_1\Lambda(r) )(2- 8k\Lambda(r))}{(1+ 4K_1\Lambda(r))(1+ 8 k \Lambda(r))}}\log \frac{\tilde{H}(r'_3)}{\tilde{H}(r'_2)}+ C_4 \sqrt{M} r  + C_6 \log \frac{1+C_5r}{1-C_5 r}.
\end{equation}
We now use the fact that there exists a universal constant $C_7$ such that
\[
\log \frac{1+C_5 r}{1-C_5 r} \leq C_7 r.
\]
Using this estimate in \eqref{iter4}, we finally obtain 
\begin{equation}\label{iter10}
\log \frac{\tilde{H}(r'_2)}{\tilde{H}(r'_1)} \leq  e^{C_1 r} \frac{\log \frac{(1+ 4K_1\Lambda(r) )( 2+ 16 k \Lambda(r))}{(1 -4K_1\Lambda(r))(1-16 k\Lambda(r))}}{\log \frac{(1 -4K_1\Lambda(r) )(2- 8k\Lambda(r)}{(1+ 4K_1\Lambda(r))(1+ 8 k \Lambda(r))}} \log \frac{\tilde{H}(r'_3)}{ \tilde{H}(r'_2)}+ C_4 \sqrt{M} r  + C_7 r.
\end{equation}
Note that since we  are assuming  that $M \ge 1$, we can absorb the term  $C_7 r$ in $C_4 \sqrt{M}r$ . 

At this point we define 
\[
r''_1= r/4 - ka/2 = \lambda_2 r_1',\ \ \ \ r''_2= r/2 + ka/2 = \lambda_1 r_2',\ \ \ \ r''_3=  r - ka/2 = \lambda_2 r_3',
\]
and
\[
r'''_1= r/4 - ka/3,\ \ \ \ r'''_2= \frac{r/2 + ka/3}{\lambda_1},\ \ \ \ r'''_3=  r - ka/3.
\]
We note that,  since  $r'_1 \leq r'''_1, r'_2 \geq r'''_2,  r'_3 \leq r'''_3$, if we introduce the quantities 
\begin{align*}
&  H_1(r'_1) = \int_{\Om_{y_0} \cap B_{r'_1}(p')} u_{y_0}^2 ((r'''_1)^2 - |y-p'|^2)^{\alpha},
\\
& H_2(r'''_2)= \int_{\Om_{y_0} \cap B_{r'''_2}(p')} u_{y_0}^2 ( r'_2)^2 - |y-p'|^2)^{\alpha},
\\
&  H_3(r'_3) = \int_{\Om_{y_0} \cap B_{r'_3}(p')} u_{y_0}^2 ((r'''_3)^2 - |y-p'|^2)^{\alpha},
\end{align*}
then from \eqref{iter10} we obtain for a universal constant $C_8$,
\begin{equation}\label{iter11}
\log \frac{H_2(r'''_2)}{H_1(r'_1)} \leq  e^{C_1 r} \frac{\log \frac{(1+ 4K_1\Lambda(r))( 2+ 16 k \Lambda(r))}{(1 -4K_1\Lambda(r))(1-16 k\Lambda(r))}}{\log \frac{(1 -4K_1\Lambda(r) )(2- 8k\Lambda(r))}{(1+ 4K_1\Lambda(r))(1+ 8 k \Lambda(r))}} \log \frac{H_3(r'_3)}{ H_2(r'''_2)}+ C_8 \sqrt{M} r.
\end{equation}

We now return to the original domain $\Om$ by applying the  transformation $T_{y_0}^{-1}$.  We note from \eqref{a11} that  the Jacobian $JT_{y_0}$ of the transformation $T_{y_0}$ satisfies $|JT_{y_0}| = 1 + O(a)$, where $O(a)$ is universal.
Therefore, there exists a universal constant $C_9$ such that
\begin{equation}\label{jc}
1- C_9 r \leq |JT_{y_0}| \leq 1 + C_9 r.
\end{equation}
Changing the variable $y = T_{y_0}(x)$ in the integrals defining the quantities $H_1(r_1'), H_2(r'''_2)$ and $H_3(r'_3)$, we find
\begin{align}\label{cv}
 &  H_1(r'_1) = \int_{\Om \cap T_{y_0}^{-1}( B_{r'_1}(p'))} u^2  ((r'''_1)^2 - |T_{y_0}(x)-p'|^2)^{\alpha}| |JT_{y_0}|,
\\
& H_2(r'''_2)= \int_{\Om \cap T_{y_0}^{-1}( B_{r'''_2}(p'))} u^2 (( r'_2)^2 - |T_{y_0}(x)-p'|^2)^{\alpha}|JT_{y_0}|,
\notag
\\
&  H_3(r'_3) = \int_{\Om \cap T_{y_0}^{-1}( B_{r'_3}(p'))} u^2 ((r'''_3)^2 - |T_{y_0}(x)-p'|^2)^{\alpha}|JT_{y_0}|.
\notag
\end{align}
Now, by \eqref{nt}, \eqref{cont1} and the definition of $\lambda_{i}, r'_{i}, r''_i, r'''_{i}$, we have that
\begin{align}\label{cont3}
 T_{y_0}^{-1}( B_{r'_1}(p')) \subset  B_{r''_1},\ \ \ B_{r/2 + ka/3} \subset T_{y_0}^{-1}( B_{r'''_2}(p')),\ \ \ T_{y_0}^{-1}( B_{r'_3}(p')) \subset B_{r''_3}.
\end{align}
In fact, proving the first inclusion in \eqref{cont3} is equivalent to showing that $B_{r'_1}(p') \subset T_{y_0}(B_{r''_1})$. Recalling that $p' = T_{y_0}(0)$, we have from \eqref{cont1} $B_{r'_1}(p') = B_{r'_1}(T_{y_0}(0))\subset  T_{y_0}(B_{\lambda_{y_0}^{-1/2} r_1'})$. To establish the first inclusion it thus suffices to check that $\lambda_{y_0}^{-1/2} r_1' \le r_1''$. Recalling that $r_1'' = \lambda_2 r_1'$, it thus suffices to have $\lambda_{y_0}^{-1/2}  \le \lambda_2$, but this is precisely the content of the second inequality in \eqref{nt} if we keep \eqref{cont2} in mind. In a similar way, we see that, setting $s = r/2 + ka/3$, then the second inclusion on \eqref{cont3} is equivalent to $T_{y_0}(B_{s})\subset B_{r_2'''}(p')$. Again from \eqref{cont1} we have $T_{y_0}(B_{s}) \subset B_{s/\sqrt{\lambda_{y_0}}}(T_{y_0}(0)) = B_{s/\sqrt{\lambda_{y_0}}}(p')$, and thus for the second inclusion to hold it suffices to have $\lambda_1 \le \sqrt{\lambda_{y_0}}$. But this is the content of the first inequality in \eqref{nt}. Finally, the third inclusion in \eqref{cont3} is proved exactly as the first one.

From the inclusions \eqref{cont3} and from \eqref{cv} we conclude that the following are true:
\begin{align}\label{cv2}
&  H_1(r'_1) \le \int_{\Om \cap B_{r''_1}} u^2  ((r'''_1)^2 - |T_{y_0}(x)-p'|^2)^{\alpha}| |JT_{y_0}|,
\\
& H_2(r'''_2) \ge \int_{\Om \cap B_{r/2 + ka/3}} u^2 (( r'_2)^2 - |T_{y_0}(x)-p'|^2)^{\alpha}|JT_{y_0}|,
\notag
\\
&  H_3(r'_3) \le \int_{\Om \cap B_{r''_3}} u^2 ((r'''_3)^2 - |T_{y_0}(x)-p'|^2)^{\alpha}|JT_{y_0}|,
\notag
\end{align}
One concern about the integrals appearing in \eqref{cv2} is that, having increased the domains of integration, we might lose control of the sign of the weights appearing in the integrals. This possibility is excluded by the following considerations.

By \eqref{a10}, \eqref{a11} we have for $x \in   B_{r''_1}$
\[
|T_{y_0}(x)-p'| \le (1+K_1 a) |x| < (1+K_1 a) r_1'' = (1+K_1 a)(r/4 - ka/2). 
\]
Since $r<1$, we conclude that for $0<r<R_0$
\[
 r'''_1  - |T_{y_0}(x)-p'| \geq r/4 - ka/3  - (1+K_1 a) r/4 + (1+K_1 a)ka/2  \geq ka/6 -K_1a/4\geq 0
\]
provided that $k\ge 3K_1/2$, which is true thanks to \eqref{k2} above. Similarly, for $x\in B_{r/2 + ka/3}$ we have
\[
r'_2 - |T_{y_0}(x)-p'| \geq \frac{r/2 + ka/2}{\lambda_1} - (1+K_1 a) r/2 + (1+K_1 a)ka/3 \ge a (5k/6 - K_1/2) \ge 0,
\]
provided that $k\ge 3K_1/5$, which is of course true thanks to \eqref{k2} above.
Finally, for $x \in  B_{r''_3}$ we have
\[
r'''_3 - |T_{y_0}(x)-p'| \geq ka/2 - ka/3 - K_1 a \geq 0
\]
provided that $k\ge 6 K_1$, which is again true by \eqref{k2}.

From \eqref{iter11}, \eqref{cv2} and \eqref{jc} we conclude 
\[
\log \frac{(1-C_9 r)G_2(r/2 + ka/3)}{(1+ C_9 r)G_1(r''_1)} \leq  e^{C_1 r} \frac{\log \frac{(1+ 4K_1\Lambda(r))(2+ 16 k \Lambda(r))}{(1 -4K_1\Lambda(r))(1-16 k\Lambda(r))}}{\log \frac{(1 -4K_1\Lambda(r))(2- 8k\Lambda(r))}{(1+ 4K_1\Lambda(r))(1+ 8 k \Lambda(r))}} \log \frac{(1+ C_9 r)G_3(r''_3)}{(1-C_9 r) G_2(r/2 + ka/3)}+ C_8 \sqrt{M} r,
\]
where we have set
\begin{align*}
G_1(s) & = \int_{\Om_ \cap B_s} u^2 ((r'''_1)^2 - |T_{y_0}x-p'|^2)^{\alpha},
\\
G_2(s) & = \int_{\Om \cap B_s} u^2 ((r'_2)^2 - |T_{y_0}x - p'|^2)^{\alpha},
\\
G_3(s) & = \int_{\Om_1 \cap B_s} u^2 ((r'''_3)^2 - |T_{y_0}x-p'|^2)^{\alpha}.
\end{align*}

Arguing as above we obtain for a universal $C_{10}$
\begin{equation}\label{iter7}
\log\ \frac{G_2(r/2 + ka/3)}{G_1(r''_1)} \leq  e^{Cr} \frac{\log \frac{(1+ 4K_1\Lambda(r))( 2+ 16 k \Lambda(r))}{(1 -4K_1\Lambda(r))(1-16 k\Lambda(r))}}{\log \frac{(1 -4K_1\Lambda(r) )(2- 8k\Lambda(r))}{(1+ 4K_1\Lambda(r))(1+ 8 k \Lambda(r))}} \log\ \frac{G_3(r''_3)}{G_2(r/2 + ka/3)}+ C_{10}\sqrt{M} r.
\end{equation}

In order to complete the proof of the lemma we need to replace the quantities $G_1(r''_1)$, $G_2(r/2 + ka/3)$ and $G_3(r''_3)$ in \eqref{iter7} respectively with
$G(r/4)$, $G(r/2)$ and $G(r)$. With this objective in mind we observe that
\eqref{a10} and \eqref{a11} give with $||B||\le K_1 a$,
\[
|T_{y_0}x-p'|^2 = |x+Bx|^2 = |x|^2 + |Bx|^2 + 2<Bx,x>.
\]
We now claim if $0<r<R_0$ and the conditions \eqref{fr3} and \eqref{k2} are in place, then the following implications  hold
\begin{equation}\label{cv5}
x \in \Om \cap B_{r''_1}  \ \Longrightarrow\ ((r'''_1)^2 - |T_{y_0}x-p'|^2)^{\alpha} \leq ((r/4 - ka/6)^2 - |x|^2)^{\alpha}, 
\end{equation}
\begin{equation}\label{cv6}
x \in \Om \cap B_{r/2 + ka/3}
 \ \Longrightarrow\  ((r'_2)^2 - |T_{y_0}x - p'|^2)^{\alpha} \geq ((r/2+ ka/3)^2 - |x|^2)^{\alpha},
\end{equation}
and
\begin{equation}\label{cv7} 
x \in \Om \cap B_{r''_3}\ \Longrightarrow\ ((r'''_3)^2 - |T_{y_0}x-p'|^2)^{\alpha} \leq ((r - ka/6)^2 - |x|^2)^{\alpha}. 
\end{equation}
The justification of \eqref{cv5}, \eqref{cv6} and \eqref{cv7} follows similarly to that of \eqref{A1}, \eqref{A2} and \eqref{A3} above. For instance, for the implied inequality in \eqref{cv5} to hold true it suffices that
\[
\frac{ka}{3}\left(\frac r4 - \frac{ka}{6}\right) + |Bx|^2 + 2<Bx,x>\ \ge\ 0.
\]
Since $|x| < r/4 - ka/2$, for the latter inequality to be valid it suffices that
\[
4 k^2 \Lambda(r)  + 24 K_1^2 \Lambda(r)(r - 2 ka)^2 
\le k.
\]
Using \eqref{fr3} we can bound from above the left-hand side of this inequality by $k/16 + 2 K_1$ which is of course $\le k$ if, e.g., $k\ge 3 K_1$. Since this is trivially guaranteed by \eqref{k2} above, we conclude that the implication \eqref{cv5} is true. In a similar way, we see that for the implied inequality in \eqref{cv6} to hold it suffices that
\[
\left(\frac r2 + \frac{ka}{2}\right)^2 - \left(\frac r2 + \frac{ka}{3}\right)^2 - \lambda_1^2 K_1^2 a^2 |x|^2 - 2\lambda_1^2 K_1 a |x|^2 - K_1^2 a^2 \left(\frac r2 + \frac{ka}{3}\right)^2 \ge 0.
\]
Since $|x| < r/2 - ka/3$, $\lambda_1 \le 1$, for the latter inequality to be true it suffices to have
\[
\left(2 K_1 + 2 K_1^2 a\right)\left(\frac 12 + \frac{4k\Lambda(r)}{3}\right)^2 \le \frac{k}{6}.
\]
From \eqref{fr3} we immediately see that the left-hand side is bounded from above by $\left(2 K_1 + \frac{K_1}{3}\right)\left(\frac 12 + \frac{1}{48}\right)^2$. It is now obvious that this quantity is $\le k/6$ under the hypothesis \eqref{k2}. Therefore, \eqref{cv6} is true. Finally, we leave it to the reader to verify that \eqref{cv7} does hold under the hypothesis \eqref{fr3} and \eqref{k2}.

At this point,  by taking into account the definition  of $G$, from \eqref{iter7}, \eqref{cv5}, \eqref{cv6} and \eqref{cv7} we conclude that the following holds 
\[
\log \frac{G(r/2 + ka/3)}{G(r/4 - ka/6)} \leq  e^{C_1 r} \frac{\log \frac{(1+ 4K_1\Lambda(r))(2+ 16 k \Lambda(r))}{(1 -4K_1\Lambda(r))(1-16 k\Lambda(r))}}{\log \frac{(1 -4K_1\Lambda(r))(2- 8k\Lambda(r))}{(1+ 4K_1\Lambda(r))(1+ 8 k \Lambda(r))}} \log \frac{G(r- ka/6)}{G(r/2 + ka/3)}+ C_{10} \sqrt{M} r, 
\]
which in turn implies
\[
\log \frac{G(r/2)}{G(r/4)} \leq  e^{C_1 r} \frac{\log \frac{(1+ 4K_1\Lambda(r))(2+ 16 k \Lambda(r))}{(1 -4K_1\Lambda(r))(1-16 k\Lambda(r))}}{\log \frac{(1 -4K_1\Lambda(r))(2- 8k\Lambda(r))}{(1+ 4K_1\Lambda(r))(1+ 8 k \Lambda(r))}} \log \frac{G(r)}{ G(r/2)}+ C_{10} \sqrt{M} r.
\]
This is the desired conclusion \eqref{he}.

\end{proof}

\section{Proof of Theorem \ref{main}}\label{S:gen}

Having established the two main growth Lemmas \ref{L:vegen} and \ref{imp2}, in this section we are finally in a position to provide the
  
\begin{proof}[Proof of Theorem \ref{main}]
With  $x_0$ as in the statement of Theorem \ref{main}, by means of the transformation \eqref{tran} we can assume that $x_0=0$ and $A(0)=I_n$. This would only change the ellipticity bound from $\lambda$ to $\lambda^2$. Also, the Lipchitz norm of the resulting matrix $A_{x_0}$ and of the transformed potential $V_{x_0}$ would differ from that of $A, V$ by some universal factor as in \eqref{VV}-\eqref{newlip}. Therefore, we can reduce the proof to the setting of Lemma \ref{imp2}. In this respect we stress that, henceforth, $k$ and $R_0$ will be fixed as in \eqref{k2} and \eqref{fr3} above. 

We now fix $r_0 = R_0/4$ and iterate \eqref{he} with $r=r_0, r_0/2, r_0/4,...$. After the $q-$th iteration we obtain
\[
\log  \frac{G(r_0/2^{q+1})}{G(r_0/2^{q+2})}  \leq e^{C_1 \sum_{i=1}^q \frac{r_0}{2^i}}\prod_{i=0}^{q}  \frac{\log \frac{(1+ 4K_1 \Lambda(r_0/2^i))(2+ 16 k \Lambda(r_0/2^i))}{ (1- 4K_1 \Lambda(r_0/2^i))(1-16 k\Lambda(r_0/2^i)}}{\log \frac{(1- 4K_1 \Lambda(r_0/2^i))(2- 8k\Lambda(r_0/2^i))}{(1 +4K_1 \Lambda(r_0/2^i))(1+ 8 k \Lambda(r_0/2^i)}}\left[ C \sqrt{M} \sum_{i=0}^q  \frac{r_0}{2^i} + \log \frac{G(r_0)}{G(r_0/2)}\right].
\]
In the latter inequality we can estimate
\[
 C \sqrt{M} \sum_{i=0}^q  \frac{r_0}{2^i}  \leq 2 C R_0 \sqrt{M} \le 2 C \sqrt{M},
\]
where we have used \eqref{1000}. 
We let $\tilde{C}= \text{min} (\frac{1}{24K_1}, \frac{1}{64k})$, and consider the function  $f:[0,\tilde{C}] \to  \R$ defined by
\[
f(y)= \frac{\log \frac{(1+ 4K_1 y)(2+ 16 k y)}{(1- 4K_1y)(1-16 k y)}}{\log \frac{(1-4K_1y)(2- 8k y)}{(1+4K_1y)(1+ 8 k y)}}.
\]
Then, $f\geq 0$,  $f(0)=1$ and $|f'|\le c_2$ on $[0,\tilde{C}]$, for some universal $c_2$. Therefore,
\begin{equation}\label{e53}
0 \leq f(y) \leq e^{c_2 y}. 
\end{equation} 
Since $\Lambda$ is non-decreasing, \eqref{e20} above implies that
\[
\sum_{i=0}^\infty  \Lambda(r_0/2^i)  < \infty.
\]

At this point we note that, thanks to \eqref{fr3}, the number $r_0$ satisfies $\Lambda(r_0) < \tilde{C}$. Combining this observation with \eqref{e53}, we conclude  that
\[
\prod_{i=0}^{\infty}  \frac{\log \frac{(1+ 4K_1 \Lambda(r_0/2^i))(2+ 16 k \Lambda(r_0/2^i))}{(1- 4K_1 \Lambda(r_0/2^i))(1-16 k\Lambda(r_0/2^i)}}{\log \frac{(1- 4K_1 \Lambda(r_0/2^i))(2- 8k\Lambda(r_0/2^i))}{(1 +4K_1 \Lambda(r_0/2^i))(1+ 8 k \Lambda(r_0/2^i)}}
\le \exp\left(c_2 \sum_{i=0}^\infty \Lambda(r_0/2^i)\right) \overset{def}{=} K_0 < \infty.
\]
Using these bounds we obtain for some different universal $C$, 
\begin{equation}\label{e55}
\log  \frac{G(r_0/2^{q+1})}{G(r_0/2^{q+2})} \leq  C \sqrt{M} + K_0 \log \frac{G(r_0)}{G(r_0/2)}.
\end{equation}
We now want to bound from above the quotient $\frac{G(r_0)}{G(r_0/2)}$ in the right-hand side of \eqref{e55}. This will be ultimately accomplished by means of Lemma \ref{L:vegen}. With $C_0 = ||u||_{L^\infty(\Om)} >0$ as in the statement of Theorem \ref{main}, we have the trivial bound
\begin{equation}\label{num}
G(r_0) \le  \omega_n C_0^2 r_0^{2\alpha+n},
\end{equation}
where $\omega_n$ indicates the measure of the unit ball in $\Rn$. 
With $h(r)$ the function defined in \eqref{geh} above, we recall that we have
$h(r) \le (s^2 - r^2)^{-\alpha} H(s)$ for $0<r<s$. Taking $r = 3r_0/8$ and $s = r_0/2$, we obtain
\begin{equation}\label{Hh}
h(3r_0/8) \leq \left(\frac{64}{7}\right)^\alpha r_0^{-2\alpha} H(r_0/2) \leq \lambda^{-1} \overline C^\alpha r_0^{-2\alpha}G(r_0/2),
\end{equation}
 where $\overline C>0$ is an absolute constant. Combining \eqref{num} with \eqref{Hh}, we find for a universal $C>0$ which also depends on $C_0$
 \[
 \frac{G(r_0)}{G(r_0/2)} \le \frac{C \overline C^{\alpha} r_0^n}{h(3r_0/8)}.
 \]
 We are thus left with estimating $h(3r_0/8)$ from below. With this objective in mind we recall the boundary estimates \eqref{best} and \eqref{lle} that give for a certain universal constant $C>0$, depending also on the Lipschitz character of $\partial \Om$,
 \begin{equation}\label{ie}
||u||^2_{L^\infty(\Om \cap B_{r_0/4})} \le  C (1 + ||V||_{L^\infty(\Om)})^2 r_0^{-n} h(3r_0/8).
\end{equation}
At this point we invoke \eqref{h34} in Lemma \ref{L:vegen} above that reads 
\[
||u||^2_{L^\infty(\Om \cap B_{r_0/4})}  \geq L_1 \exp (-L_2(\sqrt{M} + 1)).
\]
Combining this estimate with \eqref{ie}, and observing that $||V||_{L^\infty(\Om)} \le ||V||_{W^{1,\infty}(\Om)} \le M$, we obtain
\[
\frac{G(r_0)}{G(r_0/2)}  \le C(1+M)^2 e^{L_3(\sqrt{M} + 1)},
\]
where $C, L_3$ are universal constants.
Keeping in mind that we are assuming $M\ge 1$, it should be clear to the reader that there exists another universal constant $C$, depending on $n, \lambda, K, C_0$  such that 
\[
\log \frac{G(r_0)}{G(r_0/2)} \le C \sqrt M.
\]
Using this estimate in \eqref{e55}, we finally obtain for every $q\in \mathbb N$
\begin{equation}\label{e58}
\log  \frac{G(r_0/2^{q+1})}{G(r_0/2^{q+2})} \leq \overline C \sqrt{M},
\end{equation}
for a new constant $\overline C>0$ that, depends on $n, \lambda, K,  C_0$, but not on $q$. By a standard argument one easily recognizes that \eqref{e58} implies for every $0< r < r_0$, 
\begin{equation}\label{e59}
G(r) \leq \exp (C \sqrt{M}) G(r/2).
\end{equation}
Again, iterating \eqref{e59} with $r=r_0, r_0/2, ..., r_0/2^q,...$, we find after the $q$-th iteration 
\begin{equation}\label{e60}
G(r_0/2^q) \geq A \exp (- q B \sqrt{M}), 
\end{equation}
for suitable constants $A, B$ independent of $q\in \mathbb N$.
By a standard argument again, the estimate \eqref{e60} easily gives for $0<r< r_0$, 
\begin{equation}\label{e61}
G(r) \geq K_1 r^{C \sqrt{M}}.
\end{equation}
Since as before we have $G(r) \leq C ||u||_{L^{\infty}( B_r \cap \Om)}^2$, the desired conclusion \eqref{m2} in Theorem \ref{main} follows from \eqref{e61}. 

\end{proof}

\end{document}